\newtheorem*{rep@theorem}{\rep@title}
\newcommand{\newreptheorem}[2]{%
	\newenvironment{rep#1}[1]{%
		\def\rep@title{#2 \ref{##1}}%
		\begin{rep@theorem}}%
		{\end{rep@theorem}}}
\theoremstyle{plain}
\newtheorem{thm}{Theorem}[section]
\newtheorem{lem}[thm]{Lemma}
\newtheorem{cor}[thm]{Corollary}
\newtheorem{prop}[thm]{Proposition}
\theoremstyle{definition}
\newtheorem{ex}[thm]{Example}
\newtheorem{rem}[thm]{Remark}
\theoremstyle{definition}
\newtheorem{defi}[thm]{Definition}
\newtheorem{thm*}{Theorem}[section]
\newtheorem{prob*}[thm*]{Problem}
\DeclareMathOperator{\sln}{sl_{n}}
\DeclareMathOperator{\sl3}{sl_{3}}
\DeclareMathOperator{\R}{\mathbb{R}}
\DeclareMathOperator{\N}{\mathbb{N}}
\DeclareMathOperator{\Z}{\mathbb{Z}}
\DeclareMathOperator{\C}{\mathbb{C}}
\DeclareMathOperator{\sgn}{sgn}
\DeclareMathOperator{\wt}{wt}
\DeclareMathOperator{\str}{str}
\newcommand{\riga}{\Gamma^{*}_a}
\newcommand{\lsp}{Littelmann--Berenstein-Zelevinsky }
\DeclareMathOperator{\ii}{\mathbf{i}}
\DeclareMathOperator{\jj}{\mathbf{j}}
\newcommand{\PPsi}{\Lambda}
\DeclareMathOperator{\RR}{\Phi}
\newcommand{\W}{\mathcal{W}(w_0)}
\DeclareMathOperator{\w}{\mathcal{W}(w)}
\begin{document}

\title[crossing formula for string parametrizations]{Combinatorics of canonical bases revisited: String data in type $A$}

\author{Volker Genz}
\address{Mathematical Institute, Ruhr-University Bochum}
\email{volker.genz@gmail.com}

\author{Gleb Koshevoy}
\address{IITP Russian Academy of Sciences, MCCME and Poncelet Center}
\email{koshevoyga@gmail.com}

\author{Bea Schumann}
\address{Mathematical Institute, University of Cologne}
\email{bschumann@math.uni-koeln.de}

\begin{abstract} We give a formula for the crystal structure on the integer points of the string polytopes and the $*$-crystal structure on the integer points of the string cones of type $A$ for arbitrary reduced words. As a byproduct we obtain defining inequalities for Nakashima-Zelevinsky string polytopes. Furthermore, we give an explicit description of the Kashiwara $*$-involution on string data for a special choice of reduced word.
\end{abstract}

\maketitle

\section*{Introduction}
Let $\mathfrak{g}$ be a simple complex Lie algebra of rank $n-1$ and $V$ a finite dimensional representation of $\mathfrak{g}$. Much information of $V$ is encoded in a directed graph with arrows colored by $\{1,2,\ldots,n-1\}$, called the crystal graph of $V$ \cite{Ka91}. For instance, this crystal graph is connected if and only if $V$ is irreducible, the character of $V$ is encoded in the vertices of the crystals graph and there exists a simple notion of the tensor product of two crystal graphs yielding the crystal graph of the tensor product of two representations.

For $V$ irreducible, its crystal graph has a unique source corresponding to a highest weight vector of $V$. Making use of this fact, Littelmann \cite{Lit} and Berenstein-Zelevinsky \cite{BZ, BZ2} gave a bijection between the vertices of this graph as integer points of a rational convex polytope, called the \lsp string polytope. 

The rule for assigning an integer point in the \lsp string polytope to a vertex $v$ is as follows. 
Let $x_1$ be the largest integer such that there are $x_1$ consecutive arrows of color $i_1$ ending in $v$. Let $v_1$ be the source of this sequence of arrows.  
Let $x_2$ be the length of the longest sequence of arrows of a color $i_2$ ending in $v_1$ and so on. If we pick the colors $i_1,i_2,\ldots,i_N$ according to the appearance in a reduced decomposition of the longest Weyl group element of $\mathfrak{g}$, this procedure ends at the source of the graph. Then the vertex $v$ maps to the integer point $(x_1,x_2,\ldots,x_N)\in \mathbb{N}^N$, called the string datum of $v$.

\lsp string polytopes have a vast amount of applications. They are generalizations of Gelfand-Tsetlin polytopes (\cite{Lit}), appear as Newton-Okounkov bodies for flag varieties (\cite{FFL1,K15}) and in Gross-Hacking-Keel-Kontsevich's construction of canonical bases for cluster varieties (\cite{BF,GKS2}).

We consider the following problem for the string polytope of an irreducible representation $V$ associated to the reduced word $\ii=(i_1,i_2,\ldots,i_N)$ of the longest Weyl group element of $\mathfrak{g}$.
\begin{prob*}\label{proplem}
	 Give a formula for the operator ${f}_a$ on the integer points of the string polytope $P$ defined as follows. For two integer points $x$ and $x'$ in $P$ we have $f_a x =x'$, if the corresponding vertices $v$ and $v'$ in the crystal graph are connected by an arrow of color $a$.
\end{prob*}

Problem \ref{proplem} is easy to solve for $a=i_1$. In this case we have 
$${f}_a(x_1,x_2,\ldots,x_N)=(x_1+1,x_2,\ldots,x_N).$$
There is, however, no obvious solution for arbitrary $a$. For $\sl3(\C)$ and the reduced word $s_1s_2s_1$, one can deduce from an explicit construction of the crystal graph (\cite{DKKA2})
that $f_2(x_1, x_2, x_3)$ is equal to  $(x_1, x_2+1, x_3)$ if $x_1\le x_2-x_3$ and $(x_1-1, x_2+1, x_3+1)$ otherwise. In this work we solve Problem \ref{proplem} by establishing a formula for the operator ${f}_a$ for any $a$ in the case that $\mathfrak{g}=\sln(\mathbb{C})$.

For $a\in \{1,2,\ldots,n-1\}$ and a reduced word $\ii=(i_1,i_2,\ldots,i_N)$ of the longest element of the Weyl group of $\sln(\mathbb{C})$ we define in Section \ref{wiringandrein} finitely many sequences $\gamma=(\gamma_j)$ of positive roots of $\sln(\mathbb{C})$ with certain properties which we call \emph{$a$-crossings}. These sequences come with an order relation $\preceq$. We further introduce maps $r$, $s$ associating to $\gamma$ the vectors $r(\gamma)$, $s(\gamma)\in \mathbb{Z}^N$.

Our main result reads as follows, where $\langle \cdot,\cdot \rangle$ is the standard scalar product on $\mathbb{Z}^N$.
\begin{reptheorem}{crossing}
Let $\gamma$ be minimal such that $\langle x,  r(\gamma) \rangle$ is maximal. Then $${f}_{a} x=x+s(\gamma).$$
\end{reptheorem}
Theorem \ref{crossing} is in analogy to the Crossing Formula established in \cite[Theorem 2.13, Proposition 2.20]{GKS1}, which computes the operator ${f}_a$ on the polytopes arising from Lusztig's parametrizations of the crystal graph. Indeed, the two formulae may be viewed as dual since the roles of maximum and minimum and the vectors $r(\gamma)$, $s(\gamma)$ interchange. We elaborate on this duality in \cite{GKS3}.

Theorem \ref{crossing} gives rise to two applications. The Verma module of $\mathfrak{g}$ of weight $0$ has a crystal graph $B(\infty)$ with a unique source. 
Kashiwara \cite{Ka2} defined an involution $*$ on the vertices of $B(\infty)$, leading to a second crystal graph $B(\infty)^*$ with the same set of vertices. Namely, there is an arrow from $v_1$ to $v_2$ of color $a$ in $B(\infty)^*$ if and only if there is an arrow from $v_1^*$ to $v_2^*$ of color $a$ in $B(\infty)$.

Associating integer vectors to the vertices of $B(\infty)^*$ by taking their string data, we obtain a rational polyhedral cone called the string cone \cite{Lit,BZ,BZ2} which contains the \lsp string polytope. 

A variation of Problem \ref{proplem} now arises, replacing the \lsp string polytope by the string cone and the crystal graph of an irreducible representation by $B(\infty)^*$. In Theorem \ref{stringinfty} we provide a solution to this problem for $\mathfrak{g}=\sln$. Indeed the crystal graph of each irreducible representation $V$ is a full subgraph of $B(\infty)^*$. Making use of this fact we deduce Theorem \ref{stringinfty} from Theorem \ref{crossing}.

A second crystal graph for the irreducible representation $V$ is obtained as a full subgraph of $B(\infty)$. The set of corresponding string parameters is, due to a result of Fujita-Naito \cite{FN}, again the set of integer points in a rational polytope, called the Nakashima-Zelevinsky string polytope. These polytopes have been found to coincide with Newton-Okounkov bodies for flag varieties \cite{FN,FO}. They also appear in \cite{CFL} among Newton-Okounkov bodies inducing semitoric degenerations of Schubert varieties associated to maximal chains in the corresponding Bruhat graphs.

For Nakashima-Zelevinsky polytopes problem \ref{proplem} has been solved in the work of Kashiwara \cite{Ka2} and Nakashima-Zelevinsky \cite{NZ,N99}. It is, however, a difficult problem to compute the inequalities which cut the Nakashima-Zelevinsky polytopes out of the string cone. This is so far only known in a few special cases \cite{N99,H}. Using Theorem \ref{stringinfty} we obtain these inequalities for all reduced words of the longest Weyl group element of $\sln$ in Theorem \ref{inequpol}.

The paper is organized as follows. In Section \ref{crystals} we recall the background on crystals. In Section \ref{symgroup} we recall facts about reduced words for elements of the symmetric group. In Section \ref{sectionstring} string cones and \lsp string polytopes, as well as their crystal structures, are discussed. 

In Section \ref{wiringandrein} we introduce the main combinatorial tools of this paper, namely the notion of wiring diagrams and Reineke crossings. The main result (Theorem \ref{crossing}), providing a formula for the crystal structure on \lsp string polytopes, is stated in Section \ref{sec:mainthm}. We further prove the Dual Crossing Formula for the $*$-crystal structure on the string cone in this section. 

In Section \ref{sec:NZ} Nakashima-Zelevinsky string polytopes are introduced and their defining inequalities are computed.

Section \ref{sec:Lus} deals with Lusztig's parametrization of the canonical basis and recalls facts from \cite{GKS1} which are used in the proof of Theorem \ref{crossing} which is presented in Section \ref{mainproof}.

In Section \ref{sec:kashstar} we give a description of the piecewise linear Kashiwara $*$-involution on string data. In particular, we obtain a linear isomorphism between the \lsp polytope and the Nakashima-Zelevinsky polytope for a specific reduced word.

\section*{Acknowledgement}
V. Genz and G. Koshevoy were partially supported by the SFB/TRR 191. V. Genz would like to thank the Independent University of Moscow and the Labaratoire J.-V. Poncelet for their hospitality. G. Koshevoy was supported by the grant RSF 16-11- 10075. He would furthermore like to thank the University of Cologne and the Ruhr-Univerity Bochum for their hospitality. B. Schumann was supported by the SFB/TRR 191. B. Schumann would furthermore like to thank Xin Fang, Peter Littelmann, Valentin Rappel, Christian Steinert and Shmuel Zelikson for helpful discussions.

\section{Crystals}\label{crystals}

\subsection{Notation}\label{notation}
Let $\N=\{0,1,2,\ldots\}$ be the natural numbers and $\mathfrak{g}=\sln(\mathbb{C})$, $\mathfrak{h}\subset \mathfrak{g}$ its Cartan subalgebra consisting of the diagonal matrices in $\mathfrak{g}$. We abbreviate 
$$[n]:=\{1,2,\ldots, n\}$$
 and define for $k\in [n]$ the function $\epsilon_k\in \mathfrak{h}^*$ by $\epsilon_k(\text{diag}(h_1,h_2,\ldots,h_n))=h_k$. We denote by $\Phi^+$ the set of positive roots of $\mathfrak{g}$ given by
\begin{equation*}
\Phi^+=\{\alpha_{k,\ell}=\epsilon_k-\epsilon_{\ell} \mid 1\leq k<\ell \leq n\}.
\end{equation*}
For $a\in [n-1]$, the simple root $\alpha_a$ of $\mathfrak{g}$ is given by $\alpha_a=\alpha_{a,a+1}=\epsilon_{a}-\epsilon_{a+1}$. We denote by $N=\frac{n(n-1)}{2}$ the cardinality of $\Phi^+$.

To $a\in [n-1]$ we associate the fundamental weight $\omega_a=\sum_{s\in [a]}\epsilon_s$ of $\mathfrak{g}$. 
Let $P\subset \mathfrak{h}^*$ (resp. $P^+\subset \mathfrak{h}^*$) the $\mathbb{Z}$-span (resp. $\mathbb{Z}_{\ge 0}$-span) of the set of fundamental weights $\{\omega_a\}_{a\in [n-1]}$ of $\sln(\mathbb{C})$. We call $P$ the weight lattice and $P^+$ the set of dominant integral weights.

Let $U_q(\sln)$ be the $\mathbb{Q}(q)$- algebra with generators $E_a,F_a,K_a^{\pm 1}$, $a\in [n-1]$ and the following relations for $b\in [n-1]\setminus \{a\}$
$$K_aK_a^{-1}=K_a^{-1}K_a=1, \quad K_aK_b=K_bK_a, \quad K_aE_aK_a^{-1}=q^{2}E_a$$
$$K_aF_aK_a^{-1}=q^{2}F_a, \quad E_aF_b-F_bE_a=0, \quad E_aF_a-F_aE_a=\frac{K_a-K_a^{-1}}{q-q^{-1}}$$
\begin{align*} \text{If }b= a\pm 1: \ &  E_a^{2}E_b+E_bE_a^2=(q+q^{-1})E_aE_bE_a,\\ & F_a^{2}F_b+F_bF_a^2=(q+q^{-1})F_aF_bF_a, \\
& K_aE_bK_a^{-1}=q^{-1}E_b, \quad K_aF_bK_a^{-1}=qF_b.
\end{align*}
\begin{align*} \text{If }b\neq a\pm 1: \ & E_aE_b=E_bE_a, \quad F_aF_b=F_bF_a,\\ & K_aE_bK_a^{-1}=E_b, \quad K_aF_bK_a^{-1}=F_b.
\end{align*}

For $m\in \N$, let $[m]_{q}:=q^{m-1}+q^{m-3}+\ldots +q^{-m+1}$. For $x\in U_q(\sln)$ we set
\begin{equation}\label{dividedpower}
x^{(m)}:=\frac{x^m}{([m]_{q}[m-1]_q\cdots [2]_q)}.
\end{equation}

For $\lambda\in P^+$ we denote by $V(\lambda)$ the irreducible $U_q(\sln)$-module of highest weight $\lambda$.

We finally denote by $U_q^-\subset U_q(\text{sl}_n)$ be the subalgebra generated by $\{F_a\}_{a\in[n-1]}$.

\subsection{Crystals}
We recall the definition of crystals from \cite[Section 7]{Ka3}
\begin{defi} \label{abstrcryst} A \emph{crystal} $B$ is a set endowed with the following maps.
\begin{align*}
\wt: B &\rightarrow P, &
\varepsilon_a: B &\rightarrow \mathbb{Z}\sqcup \{-\infty\}, \quad  \varphi_a:B \rightarrow  \mathbb{Z}\sqcup \{-\infty\}, \\
{e}_a: B &\rightarrow B \sqcup \{0\}, \quad &{f}_a:B&\rightarrow B \sqcup \{0\} \quad \text{ for }a\in [n-1].
\end{align*}
Here $0$ is an element not included in $B$. The above maps satisfy the following axioms for $a\in [n-1]$ and $b, b'\in B$
\begin{itemize}
\item[(C1)] $\varphi_a(b)=\varepsilon_a(b)+\wt(b)(h_a)$,
\item[(C2)] if $b\in B$ satisfies ${e}_a b \ne 0$ then $$\wt({e}_a b)=\wt(b) + \alpha_a, \quad \varphi_a({e}_a b)=\varphi_a(b)+1,\quad \varepsilon_a({e}_a b)=\varepsilon_a(b)-1,$$
\item[(C3)] if $b\in B$ satisfies ${f}_a b \ne 0$ then $$\wt({f}_a b)=\wt(b) - \alpha_a, \quad \varphi_a({f}_a b)=\varphi_a(b)-1, \quad \varepsilon_a({f}_a b)=\varepsilon_a(b)+1,$$
\item[(C4)] ${e}_a b =b'$ if and only if ${f}_a b' =b$,
\item[(C5)] if $\varepsilon_a b = - \infty$, then ${e}_a b={f}_a b =0$.
\end{itemize}
Here we put $-\infty+k=-\infty$ for $k\in \mathbb{Z}$.
\end{defi}

Let $B_1$ and $B_2$ be crystals. A map $\PPsi:B_1 \sqcup \{0\} \rightarrow B_2\sqcup \{0\}$ satisfying $\PPsi(0)=0$ is called a \emph{strict morphism of crystals} if $\PPsi$ commutes with all ${f}_a$, ${e}_a$ ($a\in [n-1]$) and if for $b\in B_1$, $\PPsi(b)\in B_2$ we have
\begin{equation*}
\wt(\PPsi(b)) =\wt(b), \quad \varepsilon_a(\PPsi(b))=\varepsilon_a(b), \quad \varphi_a(\PPsi(b))=\varphi_a(b)
\end{equation*}
for all $a\in [n-1]$. An injective strict morphism is called a \emph{strict embedding of crystals} and a bijective strict morphism is called an \emph{isomorphism of crystals}.

\begin{defi}\label{crystens} Let $B_1$ and $B_2$ be crystals. The set 
$$B_1\otimes B_2:=\{b_1 \otimes b_2 \mid b_1 \in B_2, \ b_2\in B_2\}$$ equipped with the following crystal structure is called the \emph{tensor product} of $B_1$ and $B_2$.
For $a\in [n-1]$
\begin{align*}
\wt(b_1 \otimes b_2)&=\wt(b_1)+\wt(b_2),\\
\varepsilon_a(b_1\otimes b_2)&= \max\{\varepsilon_a(b_1),\varepsilon_a(b_2)-\wt(b_1)(h_a)\}, \\
\varphi_a(b_1\otimes b_2)&= \max\{\varphi_a(b_2),\varphi_a(b_1)-\wt(b_2)(h_a)\}, \\
{e}_a(b_1\otimes b_2) & = \begin{cases} {e}_a b_1 \otimes b_2 & \text{ if } \varphi_a(b_1) \ge \varepsilon_a(b_2) \\ b_1 \otimes {e}_a b_2 & \text{ else,} \end{cases} \\
{f}_a(b_1\otimes b_2) & = \begin{cases} {f}_a b_1 \otimes b_2 & \text{ if } \varphi_a(b_1) > \varepsilon_a(b_2) \\ b_1 \otimes {f}_a b_2 & \text{ else.} \end{cases}
\end{align*}
\end{defi}

\subsection{Crystals of representations}\label{sec:crysrep}
We recall the \emph{crystal bases} $B(\infty)$ and $B(\lambda)$ of $U_q^{-}$ and $V(\lambda)$, respectively, from \cite[Sections 2 and 3]{Ka91}.

Let $a\in [n-1]$. For $P\in U_q^{-}$ there exist unique $Q,R \in U_q^{-}$ such that 
$$E_aP-PE_a=QK_a+RK_a^{-1}.$$ 
We define $e'_a(P)=R$.
As vector spaces, we have
$$U_q^{-}=\displaystyle\bigoplus_{m\ge 0} F_a^{(m)}\ker(e'_a).$$ 
We define the \emph{Kashiwara operators} ${e}_a$, ${f}_a$ on $U_q^-$ for $u\in \ker(e'_a)$ by
\begin{equation}
\label{kashops}
{f}_a(F_a^{(m)}u)= F_a^{(m+1)}u, \quad {e}_a(F_a^{(m)}u)= F_a^{(m-1)}u.
\end{equation}

Let $A$ be the subring of $\mathbb{Q}(q)$ consisting of rational functions $g(q)$ without a pole at $q=0$. Let $\mathcal{L}(\infty)$ be the $A$-lattice generated by all elements of the form
\begin{equation}\label{seqKas}
{f}_{i_1}{f}_{i_2}\cdots{f}_{i_{\ell}}(1)
\end{equation}
and let $B(\infty)\subset \mathcal{L}(\infty)/q\mathcal{L}(\infty)$ be the subsets of all residues of elements of the form \eqref{seqKas}.

For $b\in B(\infty)$ let $\wt(b)$ be the weight of the corresponding element in $U_q^-$.		 For $a\in [n-1]$ we furthermore set $\varepsilon_a(b)=\max\{{e}_a^k \ne 0 \mid k\in \mathbb{N}\}.$ This endows $B(\infty)$ with the structure of an crystal (see Definition \ref{abstrcryst}).

We let $*:U_q^- \rightarrow U_q^-$ be the $\mathbb{Q}(q)$-anti-automorphism of $U_q^-$ such that $E_a^*=E_a$ for all $a\in [n-1]$. By \cite[Theorem 2.1.1]{Ka2} we have $B(\infty)^{*}=B(\infty)$. Clearly $*$ preserves the function $\wt$. We denote by ${f}_a^*(x)=({f}_ax^*)^*$, ${e}_a^*(x)=({e}_ax^*)^*$ and $\varepsilon_a^*(x)=\varepsilon_a(x^*)$ the $*$-twisted maps. This endows $B(\infty)$ with a second structure of a crystal. We denote the crystal given by the set $B(\infty)$ and the twisted maps by $B(\infty)^*$. By construction $*$ induces a crystal isomorphism between $B(\infty)$ and $B(\infty)^*$.

For $\lambda\in P^+$ let $\pi_{\lambda}:U_q^- \rightarrow V(\lambda)$ be the surjection $u\mapsto u v_\lambda$, where $v_{\lambda}$ is a highest weight vector of $V(\lambda)$. The operators $e_a$ and $f_a$ defined in \eqref{kashops} descend to $V(\lambda)$ and we denote by $\mathcal{L}(\lambda)$ the $A$-lattice generated by all elements of the form 
\begin{equation}\label{seqKaslam}
{f}_{i_1}{f}_{i_2}\cdots{f}_{i_{\ell}}(v_{\lambda})
\end{equation}
and by $B(\lambda)\subset \mathcal{L}(\lambda)/q\mathcal{L}(\lambda)$ the subsets of all residues of elements of the form \eqref{seqKaslam}. 

For $b\in B(\lambda)$ let $\wt(b)$ be the weight of the corresponding element in $V(\lambda)$. For $a\in [n-1]$ we furthermore set 
\begin{align*}
\varepsilon_a(b)&=\max\left\{{e}_a^k b \ne 0 \,\middle\mid\, k\in \mathbb{N}\right\},\\ \varphi_a(b)&=\max\left\{{f}_a^k b \ne 0 \,\middle\mid\, k\in \mathbb{N}\right\}.
\end{align*}
This endows $B(\lambda)$ with the structure of a crystal (see Definition \ref{abstrcryst}). 

We embed $B(\lambda)$ into $B(\infty)$ with accordingly shifted weight as follows.

By \cite[Theorem 4]{Ka91} we have $\pi_{\lambda}(\mathcal{L}(\infty))=\mathcal{L}(\lambda)$ inducing a map $\overline{\pi}_{\lambda}:\mathcal{L}(\infty)/q\mathcal{L}(\infty) \rightarrow \mathcal{L}(\lambda)/q\mathcal{L}(\lambda)$ with the following properties:
\begin{itemize}
\item ${f}_a\circ \overline{\pi}_{\lambda}=\overline{\pi}_{\lambda} \circ {f}_a$ for all $a\in [n-1]$,
\item If $\overline{\pi}_{\lambda}(b)\ne 0$ we have ${e}_a\overline{\pi}_{\lambda}(b)=\overline{\pi}_{\lambda}({e}_a b)$ for all $a\in [n-1]$,
\item $\overline{\pi}_{\lambda}:B(\infty) \setminus \{\overline{\pi}_{\lambda}^{-1}(0)\} \rightarrow B(\lambda)$ is bijective.
\end{itemize}

For $\lambda \in P$ an integral weight, let $R_{\lambda}=\{r_{\lambda}\}$ be the crystal consisting of one element satisfying $\wt(r_{\lambda})=\lambda$, $\varepsilon_a(r_{\lambda})=-\lambda(h_a), \
\varphi_a(r_{\lambda})=0$ and ${e}_a r_{\lambda}={f}_a r_{\lambda}=0$ for all $a\in [n-1].$

By \cite[Corollary 5.3.13]{J95}, \cite[Theorem 3.1]{N99} $$\widetilde{B}(\lambda):=\{b\otimes r_{\lambda} \in B(\infty) \otimes R_{\lambda} \mid \overline{\pi}_{\lambda}(b\otimes r_{\lambda})\ne 0\}$$ is a subcrystal of $B(\infty)\otimes R_{\lambda}$ and $\overline{\pi}_{\lambda}$ induces an isomorphism of crystals $\widetilde{B}(\lambda)\cong B(\lambda)$. Furthermore,
\begin{equation}\label{blambdainbinfty}
\widetilde{B}(\lambda)=\{b\otimes r_{\lambda} \in B(\infty) \otimes R_{\lambda} \mid \varepsilon^*_a(b) \le \lambda(h_a) \ \forall a \in [n-1]\}\cong B(\lambda).
\end{equation}

\section{Symmetric groups, reduced words and wiring diagrams}\label{symgroup}
\subsection{Symmetric groups and reduced words}
Let $\mathfrak{S}_n$ be the symmetric group in $n$ letters. The group $\mathfrak{S}_n$ is generated by the simple transpositions $\sigma_a$ ($a \in [n-1]$) interchanging $a$ and $a+1$.

A reduced expression of $w\in \mathfrak{S}_n$ is a decomposition of $w$
$$w=\sigma_{i_1}\sigma_{i_2}\cdots \sigma_{i_k}$$
into a product of simple transposition with a minimal possible number of factors. We call $k$ the \emph{length $\ell(w)$ of $w$}. For a reduced expression of $w\in \mathfrak{S}_n$ we write $\ii:=(i_1,i_2,\ldots, i_N)$ and call $\ii$ a \emph{reduced word} (for $w$). The set of reduced words for $w$ is denoted by $\mathcal{W}(w)$.

The group $\mathfrak{S}_n$ has a unique longest element $w_0$ of length $N:=\frac{n(n-1)}{2}$. 
We have two operations on the set of reduced words $\W$.
\begin{defi}\label{moves}
A reduced word $\jj=(j_1,\ldots,j_N)\in \W$ is said to be obtained from $\ii=(\ii_1,i_k,i_{k+1},\ii_2)\in \W$ by a \emph{$2$-move at position $k\in [N-1]$} if $\jj=(\ii_1,i_{k+1},i_k,\ii_2)$ and $|i_k-i_{k+1}|>1$.

A reduced word $\jj=(j_1,\ldots,j_N)$ is said to be obtained from $$\ii=(\ii_1,i_k,i_{k+1},i_{k+2},\ii_2)\in \W$$ by a \emph{$3$-move at position $k\in [N-1]$} if $i_{k}=i_{k+2}$, $\jj=(\ii_1,i_{k+1},i_k,i_{k+1},\ii_2)$ and $|i_k-i_{k+1}|=1.$
\end{defi}

A pair $(p,q)\in [n]^2$ with $p<q$ is called an \emph{inversion for $w\in \mathfrak{S}_n$} if $w(p)>w(q)$. Let $I(w)$ be the set of inversions for $w\in \mathfrak{S}_n$. We have the $|I(w)|$ is equal to the length $\ell(w)$.

A total ordering $<$ on $I(w)$ is called a \emph{reflection ordering} if for any triple $(p,q),(p,r),(q,r)\in I(w)$ we either have $(p,q)<(p,r)<(q,r)$ or $(q,r)<(p,r)<(p,q)$.

It is well known that the sets $\w$ and $I(w)$ are in natural bijection (see e.g. \cite[Proposition 2.13]{D93}). Under this bijection the reflection order corresponding to $\ii=(i_1,\ldots,i_k)\in \w$ is given by
$$(p_1,q_1)<\ldots <(p_k,q_k)$$
where $p_j=\sigma_{i_1}\cdots\sigma_{i_{j-1}}(i_j)$, $q_j=\sigma_{i_1}\cdots\sigma_{i_{j-1}}(i_{j}+1)$.

\begin{rem}\label{convexorder}
Let $\ii \in \W$. The set $I(w_0)$ is in bijection with $\Phi^+$ via the map
\begin{equation}\label{vertexroot}
(p,q) \mapsto \alpha_{p,q},
\end{equation}
where $\alpha_{p,q}$ is defined in Section \ref{notation}. The reflection order corresponding to $\ii$ induces a total ordering on $\Phi^+$ in this case.  
\end{rem}

\section{String parametrizations}\label{sectionstring}

\subsection{String parametrization}
\subsubsection{Kashiwara embedding and string parameters}\label{strinpara}
Let $\ii \in \W$ and $b\in B(\infty)$. For $1\le k \le N$ we recursively define $$x_k=\varepsilon_{i_k}\left({e}_{i_{k-1}}^{x_{k-1}}\cdots {e}_{i_1}^{x_1}b\right)$$ and call $\str_{\ii}(b):=(x_1,\ldots, x_N)$ the \emph{string datum of $b$ in direction $\ii$}.

By \cite[Lemma 5.3]{Lit2} we have
\begin{equation}\label{hw}
{e}_{i_N}^{x_N}\,\cdots\, {e}_{i_1}^{x_1}b=b_{\infty},
\end{equation}
where $b_\infty$ is the element in $B(\infty)$ of highest weight.

By \eqref{hw} the map $\str_{\ii}$ is injective. We denote by ${\mathcal{S}_{\ii}}=\str_{\ii}(B(\infty))$ the image of $\str_{\ii}$. Let $\mathcal{S}_{\ii}^{\mathbb{R}}\subset \mathbb{R}^{N}$ be the cone spanned by ${\mathcal{S}_{\ii}}$. By \cite[Proposition 1.5]{Lit}, \cite[Proposition 3.5]{BZ2} $\mathcal{S}_{\ii}^{\mathbb{R}}$ is a rational polyhedral cone, called the \emph{string cone}, and ${\mathcal{S}_{\ii}}$ are the integral points of $\mathcal{S}_{\ii}^{\mathbb{R}}$.

Recall the definition of $\varepsilon^*_a$ and ${e}^*_a$ from Section \ref{sec:crysrep}. Now let 
$$x_k=\varepsilon^*_{i_k}\left(({e}_{i_{k-1}}^{*})^{x_{k-1}}\,\cdots\, ({e}_{i_{1}}^{*})^{x_{1}}b\right).$$ 
We call $\str^*_{\ii}(b):=(x_1,\ldots,x_N)$ the \emph{$*$-string datum of $b$ in direction $\ii$}.

\begin{lem}\label{lem:str} For $b\in B(\infty)$ we have
\begin{align} \label{stringcom}
\str_{\ii}(b^*)&=\str^*_{\ii}(b) \\ \label{starinj}
b_{\infty}&=({e}^*_{i_N})^{x_N}\,\cdots\, ({e}^*_{i_1})^{x_1}b, \\ \label{string2}
\mathcal{S}_{\ii}&=\str^*_{\ii}(B(\infty)).
\end{align}
\end{lem}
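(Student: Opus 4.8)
The statement to prove is Lemma~\ref{lem:str}, which consists of three identities relating the string datum and the $*$-string datum via the Kashiwara involution $*$.

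\medskip

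\textbf{Plan of proof.} The key observation is that $*$ is, by construction (see Section~\ref{sec:crysrep}), a crystal isomorphism between $B(\infty)$ and $B(\infty)^*$ that preserves $\wt$, and that it satisfies $({e}_a x^*)^* = {e}^*_a x$, $\varepsilon_a(x^*) = \varepsilon^*_a(x)$ by definition of the twisted maps. So all three identities should follow by ``transporting'' the defining recursion of $\str_{\ii}$ through $*$, together with the highest weight identity \eqref{hw}.

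\medskip

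\textbf{Proof of \eqref{stringcom}.} First I would prove this by induction on $k$, showing that the $k$-th coordinate of $\str_{\ii}(b^*)$ equals the $k$-th coordinate of $\str^*_{\ii}(b)$. Write $\str_{\ii}(b^*) = (x_1,\ldots,x_N)$ and $\str^*_{\ii}(b) = (y_1,\ldots,y_N)$. By definition, $x_k = \varepsilon_{i_k}\big({e}_{i_{k-1}}^{x_{k-1}}\cdots {e}_{i_1}^{x_1} b^*\big)$ and $y_k = \varepsilon^*_{i_k}\big(({e}^*_{i_{k-1}})^{y_{k-1}}\cdots ({e}^*_{i_1})^{y_1} b\big)$. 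The inductive hypothesis gives $x_j = y_j$ for $j < k$; applying $*$ to the element inside $\varepsilon^*_{i_k}(\cdots)$ and using $({e}^*_a)^m(b) = \big({e}_a^m(b^*)\big)^*$ repeatedly (which follows from ${f}^*_a(x) = ({f}_a x^*)^*$ and its inverse, or directly from the definition ${e}^*_a(x) = ({e}_a x^*)^*$), we get that $({e}^*_{i_{k-1}})^{x_{k-1}}\cdots ({e}^*_{i_1})^{x_1} b$ equals $\big({e}_{i_{k-1}}^{x_{k-1}}\cdots {e}_{i_1}^{x_1} b^*\big)^*$, and then $\varepsilon^*_{i_k}$ of this is $\varepsilon_{i_k}$ of the un-starred element, which is exactly $x_k$. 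Hence $y_k = x_k$, completing the induction.

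\medskip

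\textbf{Proof of \eqref{starinj} and \eqref{string2}.} For \eqref{starinj}, start from \eqref{hw} applied to $b^*$: ${e}_{i_N}^{z_N}\cdots {e}_{i_1}^{z_1} b^* = b_\infty$ where $(z_1,\ldots,z_N) = \str_{\ii}(b^*) = \str^*_{\ii}(b)$ by \eqref{stringcom}. Apply $*$ to both sides; since $b_\infty^* = b_\infty$ (the highest weight element is $*$-fixed, as $*$ preserves weight and $b_\infty$ is the unique element of its weight) and $\big({e}_{i_N}^{z_N}\cdots {e}_{i_1}^{z_1} b^*\big)^* = ({e}^*_{i_N})^{z_N}\cdots ({e}^*_{i_1})^{z_1} b$, we obtain \eqref{starinj}. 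Finally, \eqref{string2} is immediate from \eqref{stringcom}: since $*$ is a bijection of $B(\infty)$, we have $\str^*_{\ii}(B(\infty)) = \str_{\ii}\big(\{b^* : b \in B(\infty)\}\big) = \str_{\ii}(B(\infty)) = \mathcal{S}_{\ii}$.

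\medskip

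\textbf{Main obstacle.} The only subtle point is the careful bookkeeping in the induction for \eqref{stringcom} — namely verifying that the exponents appearing in the nested composition of ${e}^*_a$'s match those for the ${e}_a$'s step by step, so that applying $*$ really does convert one string-datum recursion into the other. This is a routine but slightly delicate induction; once it is set up correctly, \eqref{starinj} and \eqref{string2} are formal consequences, and the fact that $b_\infty$ is $*$-invariant uses only that $*$ preserves $\wt$ and that $b_\infty$ is the unique highest weight element of $B(\infty)$.
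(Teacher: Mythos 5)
Your proof is correct and follows essentially the same route as the paper: both establish \eqref{stringcom} by matching the two recursions coordinate by coordinate via the identities $({e}^*_a)^m b = ({e}_a^m b^*)^*$ and $\varepsilon^*_a = \varepsilon_a \circ *$, then deduce \eqref{starinj} by applying $*$ to \eqref{hw} (using $b_\infty^* = b_\infty$) and \eqref{string2} from the bijectivity of $*$ on $B(\infty)$. The only difference is presentational — you make the induction explicit where the paper leaves it implicit in a chain of equalities.
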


\begin{proof}
Let $(y_1, \,\dots, y_N)=\str_{\ii}(b^*)$. By \eqref{hw} we have ${e}_{i_N}^{y_N} \,\cdots\, {e}_{i_1}^{y_1}b^*=b_{\infty}$. Applying $*$ to both sides and using that ${e}^*_a b=({e}_a b^*)^*$ and $b_{\infty}^*=b_{\infty}$ we get
$$({e}_{i_{N}}^{*})^{y_{N}}\,\cdots\,({e}_{i_{1}}^{*})^{y_{1}}=b_{\infty}.$$
Since
\begin{align*}
y_k=\varepsilon^*_{i_k}\left(({e}_{i_{k-1}}^{*})^{y_{k-1}}\,\cdots\, ({e}_{i_{1}}^{*})^{y_{1}}b\right)&=\varepsilon_{i_k}\left(({e}_{i_{k-1}}^{*})^{y_{k-1}}
\,\cdots\, ({{e}^*}_{i_1})^{y_1}b\right)^* \\
&= \varepsilon_{i_k}\left({{e}}_{i_{k-1}}^{y_{k-1}}\,\cdots\, {{e}}_{i_1}^{y_1}b^*\right)=(\str_{\ii}(b^*))_k
\end{align*}
we obtain \eqref{stringcom}. Now \eqref{starinj} follows by applying \eqref{hw} to $\str_{\ii}(b^*)$.

Since the crystals $B(\infty)$ and $B(\infty)^*$ have the same underlying set (see Section \ref{sec:crysrep}), Equation \eqref{string2} follows from \eqref{stringcom}.
\end{proof}

\subsection{Crystal structures on string data}\label{crysonstring}
In this section we equip $S_{\ii}$ with two crystal structures isomorphic to $B(\infty)$.

For $a\in [n-1]$ and $k\in\Z$ let $b_a(k)$ be a formal symbol. We denote by $B_a:=\{b_a(k) \mid k \in \mathbb{Z}\}$ the crystal, such that for $a'\in[n-1]$
\begin{align*}
\varepsilon_{a'}(b_{a}(k))&= \varphi_{a'}(b_{a}(-k))= \begin{cases}
-k, &\text{if $a=a'$,}\\-\infty,&\text{else,}\end{cases}\\
\wt(b_a(k))&=k\alpha_a,\\
{f}_{a'}(b_{a}(k))&=\begin{cases} b_{a}(k-1), &\text{if $a'=a$,}\\0&\text{else,}\end{cases}\\
{e}_{a'}(b_{a}(k))&=\begin{cases} b_a(k+1), &\text{if $a'=a$,}\\0&\text{else.}\end{cases}
\end{align*}

By \cite[Theorem 2.2.1]{Ka2} there exists for any $a\in [n-1]$ a unique strict embedding of crystals given by 
\begin{align}\label{Kashiwaraem}
\PPsi_a: B(\infty) & \hookrightarrow B(\infty) \otimes B_a \\ \notag
b_{\infty} & \mapsto b_{\infty}\otimes b_a(0).
\end{align}
In \cite[Theorem 2.2.1 and its proof]{Ka2} (see also  \cite[Section 2.4]{NZ}) the following statement is proved.

\begin{lem}\label{imemb} Let $b\in B(\infty)$ and $m=\varepsilon^*_a(b)$. We have
	\begin{equation*}
	\PPsi_a(b)=\left({e}^*_a\right)^m b \otimes b_a(-m).
	\end{equation*}
\end{lem}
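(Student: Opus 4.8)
The plan is to prove Lemma \ref{imemb} directly from the defining properties of the strict embedding $\PPsi_a$ together with the tensor product rule for crystals. First I would write $\PPsi_a(b) = b' \otimes b_a(-\ell)$ for some $b' \in B(\infty)$ and $\ell \in \Z$; this is possible because $\PPsi_a$ is a map into $B(\infty)\otimes B_a$, and the $\Z$-coordinate in the second factor is some integer $\ell$ depending on $b$. Since $\PPsi_a$ is a strict morphism commuting with all $\tilde e_a$ and $\tilde f_a$, and since $B(\infty)$ is generated from $b_\infty$ by the operators ${f}_a$, it suffices to track how repeated application of the ${f}_a$'s to $b_\infty \otimes b_a(0)$ behaves under the tensor product rule. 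The key observation is that in $B(\infty)\otimes B_a$ we have $\varepsilon_a(b_a(k)) = -k$ and $\varepsilon_{a'}(b_a(k)) = -\infty$ for $a'\neq a$, so whenever we apply ${f}_{a'}$ with $a'\neq a$, the tensor product rule forces the operator into the first factor; only ${f}_a$ can ever touch the second tensor factor, and only once $\varphi_a$ of the first factor has dropped low enough.

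The cleanest route, which I would follow, is to instead argue via the $*$-operators. By the very construction of $B(\infty)$ and the compatibility of $*$ with the crystal structure, the statement $\PPsi_a(b) = (\tilde e_a^*)^m b \otimes b_a(-m)$ with $m = \varepsilon_a^*(b)$ can be verified by checking: (i) the right-hand side, call it $\Theta(b)$, defines a strict embedding of crystals $B(\infty)\hookrightarrow B(\infty)\otimes B_a$; (ii) $\Theta(b_\infty) = b_\infty \otimes b_a(0)$, which is immediate since $\varepsilon_a^*(b_\infty) = 0$; and then invoking the uniqueness part of \cite[Theorem 2.2.1]{Ka2} (quoted above as the uniqueness of $\PPsi_a$) to conclude $\Theta = \PPsi_a$. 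For (ii) there is nothing to do. For the strictness/compatibility in (i) one must verify that $\Theta$ commutes with $\tilde e_{a'}$, $\tilde f_{a'}$ for all $a'$ and preserves $\wt$, $\varepsilon_{a'}$, $\varphi_{a'}$. Here $m = \varepsilon_a^*(b)$ is by definition the largest integer with $(\tilde e_a^*)^m b \neq 0$, so $(\tilde e_a^*)^m b$ lies in the kernel of $\tilde e_a^*$, i.e. $\varepsilon_a^*$ of the first factor of $\Theta(b)$ equals $0$; this is exactly the condition that makes the tensor product rule behave transparently.

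The steps, in order: (1) set $c = (\tilde e_a^*)^m b$, note $\varepsilon_a^*(c) = 0$ and $\wt(c) = \wt(b) + m\alpha_a$, so $\wt(\Theta(b)) = \wt(c) - m\alpha_a = \wt(b)$; (2) compute $\varepsilon_{a'}$ and $\varphi_{a'}$ of $c \otimes b_a(-m)$ using Definition \ref{crystens} and the crystal $B_a$, checking they equal $\varepsilon_{a'}(b)$, $\varphi_{a'}(b)$ — for $a' \neq a$ the second factor contributes $-\infty$ and the formulas collapse to the first factor, while for $a' = a$ one uses $\varepsilon_a^*(c)=0$ to control $\varepsilon_a(c)$ relative to $\wt$; (3) verify $\Theta$ intertwines $\tilde e_{a'}$ and $\tilde f_{a'}$: for $a' \neq a$ the operators act only in the first factor of the tensor product and commute with $(\tilde e_a^*)^m$ (since $\tilde e_a^*$ and $\tilde e_{a'}, \tilde f_{a'}$ for $a' \neq a$ interact in a controlled way, or more simply since the string through $b$ in direction $a$ under $*$-operators is preserved appropriately); for $a' = a$ one checks directly using $m = \varepsilon_a^*(b)$ how $m$ changes under $\tilde f_a$ and $\tilde e_a$ and matches it against the tensor product rule; (4) conclude $\Theta$ is a strict embedding fixing $b_\infty \otimes b_a(0)$, hence $\Theta = \PPsi_a$ by uniqueness.

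The main obstacle I anticipate is step (3) for the case $a' = a$, and more precisely controlling the interplay between $\tilde e_a, \tilde f_a$ and the $*$-twisted operators $\tilde e_a^*$. One needs the precise relation between $\varepsilon_a^*(\tilde f_a b)$, $\varepsilon_a^*(\tilde e_a b)$ and $\varepsilon_a^*(b)$ — equivalently how the $a$-string and the $*$-$a$-string through a vertex of $B(\infty)$ interact. This is governed by the rank-one ($\sl_2$) reduction of $B(\infty)$ and is essentially \cite[Proposition 3.4.2 or Corollary 3.4.3]{Ka2}-type input (the "$\varepsilon_a + \varepsilon_a^* \geq $ something" inequalities); with that in hand the verification becomes a finite case check. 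Since the excerpt permits citing \cite{Ka2}, I would import exactly that rank-one computation rather than reprove it, keeping the argument short.
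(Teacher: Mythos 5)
First, a point of comparison: the paper does not prove Lemma~\ref{imemb} at all. The sentence preceding it states that the lemma ``is proved'' in \cite[Theorem 2.2.1 and its proof]{Ka2} (see also \cite[Section 2.4]{NZ}); indeed the statement is part of that theorem with the $*$ unraveled. So the only proof to measure yours against is Kashiwara's.

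Your skeleton is right in outline: set $\Theta(b)=({e}^*_a)^m b\otimes b_a(-m)$ with $m=\varepsilon^*_a(b)$, check that $\Theta$ is a strict embedding with $\Theta(b_\infty)=b_\infty\otimes b_a(0)$, and conclude $\Theta=\PPsi_a$. The uniqueness step costs nothing: a strict morphism out of $B(\infty)$ commutes with all ${f}_{a'}$, and $B(\infty)$ is generated from $b_\infty$ by these operators, so such a morphism is determined by the image of $b_\infty$. The gap is that the entire mathematical content then sits in your steps (2) and (3), and every identity required there --- $\varepsilon_{a'}(({e}^*_a)^m b)=\varepsilon_{a'}(b)$ and ${f}_{a'}\circ({e}^*_a)^m=({e}^*_a)^m\circ{f}_{a'}$ for $a'\ne a$; the dichotomy $\varepsilon^*_a({f}_a b)\in\{m,m+1\}$ together with $({e}^*_a)^{m}{f}_a b={f}_a({e}^*_a)^{m}b$ in the first case and $({e}^*_a)^{m+1}{f}_a b=({e}^*_a)^{m}b$ in the second; and the identity $\varepsilon_a(b)=\max\{\varepsilon_a(({e}^*_a)^m b),\,-m-\wt(b)(h_a)\}$ forced by the tensor product rule --- is precisely the package of commutation relations between the starred and unstarred operators. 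In \cite{Ka2} those relations are \emph{consequences} of Theorem~2.2.1, that is, of the explicit form of $\PPsi_a$ you are trying to establish; they are not available upstream of it, so the import you propose in your final paragraph is circular as stated. A non-circular argument has to descend to the algebra $U_q^-$ and play the decomposition $U_q^-=\bigoplus_m F_a^{(m)}\ker(e'_a)$, which defines ${e}_a$ and ${f}_a$, against its right-handed ($*$-twisted) analogue on the lattice $\mathcal{L}(\infty)$ --- and that is exactly the proof of \cite[Theorem 2.2.1]{Ka2} that the paper points to. To keep your purely combinatorial route you would need an independent proof of the starred/unstarred commutation relations; citing them out of \cite{Ka2} does not supply one.
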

Lemma \ref{imemb} naturally provides two crystal structures on $\mathcal{S}_{\ii}$ as follows. 

Let $\ii=(i_1,\ldots,i_N)\in \W$. We iterate the map \eqref{Kashiwaraem} along $\ii$ by setting
$$\PPsi_{\ii}=\PPsi_{i_N} \circ \PPsi_{i_{N-1}} \circ \ldots \circ\PPsi_{i_1}.$$
Combining Lemma \ref{lem:str} with Lemma \ref{imemb} 
 we obtain the strict embedding 
\begin{equation*}
\PPsi_{\ii}(b)=b_{\infty} \otimes b_{i_1}(-x_1) \otimes b_{i_2}(-x_2) \cdots \otimes b_{i_N}(-x_N),
\end{equation*}
where $(x_1,x_2,\ldots,x_N)=\str^*_{\ii}(b)=\str_{\ii}(b^*)$. Identifying $\mathcal{S}_{\ii}$ with $\PPsi_{\ii}(B(\infty))$ via 
$$(x_1,\ldots,x_N) \mapsto b_{\infty} \otimes b_{i_N}(-x_N) \otimes \cdots \otimes b_{i_1}(-x_1).$$
yields two crystal structures $B(\infty)$ and $B(\infty)^*$ on $\mathcal{S}_{\ii}$.

From $\PPsi_{\ii}(B(\infty))\subset \{b_\infty\} \otimes B_{i_1} \otimes  \ldots  \otimes B_{i_N}$ we obtain the following explicit description of  the crystal structure on $\mathcal{S}_{\ii}$ resulting from $B(\infty)$.
Let $(c_{i,j})$ be the Cartan matrix of $\text{sl}_n(\mathbb{C})$. For $k\in[N]$ and $x\in\mathcal{S}_{\ii}$ we set 	
\begin{align}\label{strstat1}\eta_k(x):&=x_k+ \sum_{ k <\ell\le N}c_{i_k,i_\ell}x_\ell.\end{align}
\begin{lem}\label{strBiso}
The crystal structure on  $\mathcal{S}_{\ii}$ obtained from $B(\infty)$ via the bijection $b\mapsto \str^*_{\ii}(b)$ is given as follows.
For $x\in \mathcal{S}_{\ii}$ and $a\in[n-1]$
\begin{align}\begin{split}\label{scrys}
\varepsilon_a(x)&= \max\left\{\eta_k(x) \mid k\in[N],\, i_k=a \right\},\qquad \wt(x)=-\displaystyle\sum _{k=1}^{N} x_k \alpha_{i_k},\\
{f}_a (x) & = x+\left(\delta_{k,\ell^{x}}\right)_{k\in[N]}\\ {e}_a (x) &=\begin{cases}x-\left(\delta_{k, \ell_{x}}\right)_{k\in[N]} & \text{if }\varepsilon_a(x)>0, \\ 0, &\text{else,}\end{cases}\end{split}
\end{align}
where $\ell^{x}\in[N]$ is minimal with $i_{\ell^x}=a$ and $\eta_{\ell^x}(x)=\varepsilon_a(x)$ and where
$\ell_{x}\in[N]$ is maximal with $i_{\ell_x}=a$ and $\eta_{\ell_x}(x)=\varepsilon_a(x)$.	
\end{lem}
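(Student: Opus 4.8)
The plan is to translate the tensor product crystal formulas of Definition \ref{crystens} through the iterated embedding $\PPsi_{\ii}$ and compute explicitly. Since $\PPsi_{\ii}$ is a strict embedding, the crystal structure on $\mathcal{S}_{\ii}$ is uniquely determined by the requirement that $\PPsi_{\ii}$ be a crystal morphism, so it suffices to compute $\varepsilon_a$, $\wt$, ${e}_a$, ${f}_a$ on elements of the form $b_\infty \otimes b_{i_1}(-x_1) \otimes \cdots \otimes b_{i_N}(-x_N)$ lying in the image. First I would record that, by the definition of $B_a$, one has $\wt(b_{i_k}(-x_k)) = -x_k \alpha_{i_k}$, giving the weight formula immediately by additivity; and $\varepsilon_{a}(b_{i_k}(-x_k)) = x_k$ if $i_k = a$ and $-\infty$ otherwise, while $\varphi_a(b_{i_k}(-x_k)) = x_k - \langle \alpha_{i_k}, h_a\rangle\cdot(\text{something})$; more precisely $\varphi_a(b_{i_k}(-x_k)) = \varepsilon_a(b_{i_k}(-x_k)) + \wt(b_{i_k}(-x_k))(h_a)$ by axiom (C1), which equals $x_k - c_{i_k,a} x_k$ when $i_k=a$ (so $0$) and is $-\infty$ otherwise — I must be careful with sign conventions here, using $c_{i_k,a} = \alpha_{i_k}(h_a)$.

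Next I would iterate the tensor product rule for $\varepsilon_a$ from Definition \ref{crystens}. Writing $B_{\le k}$ for the partial tensor product $\{b_\infty\}\otimes B_{i_1}\otimes\cdots\otimes B_{i_k}$, one has from the recursion
$$\varepsilon_a(t_1 \otimes t_2) = \max\{\varepsilon_a(t_1),\, \varepsilon_a(t_2) - \wt(t_1)(h_a)\}$$
that $\varepsilon_a$ of the full tensor $b_\infty\otimes b_{i_1}(-x_1)\otimes\cdots\otimes b_{i_N}(-x_N)$ equals the maximum over all $k$ with $i_k = a$ of the quantity $\varepsilon_a(b_{i_k}(-x_k)) - \sum_{\ell > k}\wt(b_{i_\ell}(-x_\ell))(h_a)$, together with $\varepsilon_a(b_\infty) = 0$ as a floor term; since $B(\infty)$ has a string cone of elements with $\varepsilon_a \ge 0$, the $b_\infty$ term is dominated. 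Expanding, $-\sum_{\ell>k}\wt(b_{i_\ell}(-x_\ell))(h_a) = \sum_{\ell>k} x_\ell \alpha_{i_\ell}(h_a) = \sum_{\ell>k} c_{i_k,i_\ell} x_\ell$ is not quite right — I need $c_{a,i_\ell}$, and since $i_k=a$ this is $c_{i_k,i_\ell}$, giving exactly $\eta_k(x) = x_k + \sum_{k<\ell\le N} c_{i_k,i_\ell}x_\ell$. This yields the claimed $\varepsilon_a(x) = \max\{\eta_k(x)\mid i_k = a\}$.

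Then I would handle ${f}_a$ and ${e}_a$ by unwinding the tensor product rule ${f}_a(t_1\otimes t_2) = {f}_a t_1\otimes t_2$ if $\varphi_a(t_1) > \varepsilon_a(t_2)$ and $t_1\otimes {f}_a t_2$ otherwise. Applying this from the right across the tensor product, ${f}_a$ acts on the rightmost factor $b_{i_k}(-x_k)$ (with $i_k=a$) for which the accumulated comparison selects it; chasing through, the relevant factor is the one with $i_k=a$ that is leftmost among those achieving $\varepsilon_a(x) = \eta_k(x)$ — which is exactly $\ell^x$ — and ${f}_a b_a(-x_k) = b_a(-x_k - 1)$, i.e. $x_{\ell^x}\mapsto x_{\ell^x}+1$, giving ${f}_a(x) = x + (\delta_{k,\ell^x})_k$. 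The analogous analysis with the ${e}_a$ rule (comparing $\varphi_a(t_1)\ge\varepsilon_a(t_2)$, a non-strict inequality) selects the rightmost achiever $\ell_x$, with ${e}_a$ returning $0$ exactly when $\varepsilon_a(x) = 0$ since then every $b_a$-factor already has $\varepsilon_a = 0$ and ${e}_a$ would leave the cone. The main obstacle I anticipate is bookkeeping the chain of strict versus non-strict inequalities correctly through the $N$-fold iteration — in particular verifying that "leftmost maximizer" emerges for ${f}_a$ and "rightmost maximizer" for ${e}_a$ — and confirming that the resulting points stay in $\mathcal{S}_{\ii} = \PPsi_{\ii}(B(\infty))$ so that $0$ appears in ${e}_a$ precisely under the stated condition; this is essentially the content of \cite[proof of Theorem 2.2.1]{Ka2} and \cite[Section 2.4]{NZ}, which I would cite for the tensor-product iteration and adapt to the present notation.
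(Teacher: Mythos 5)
Your plan is the same argument the paper itself relies on: Lemma \ref{strBiso} is stated without proof, as the direct unwinding of the tensor-product rules of Definition \ref{crystens} along the embedding $\PPsi_{\ii}(B(\infty))\subset\{b_\infty\}\otimes B_{i_1}\otimes\cdots\otimes B_{i_N}$, and your computations of $\wt$, of $\varepsilon_a$ via the signature rule, and of which factor $f_a$ and $e_a$ move all reach the correct formulas. The one point you must pin down --- and which you yourself flag as the main hazard --- is the order of the tensor factors: since $\PPsi_{\ii}=\PPsi_{i_N}\circ\cdots\circ\PPsi_{i_1}$ tensors each new $B_{i_k}$-factor onto the left-hand component of the previous stage, the image is $b_\infty\otimes b_{i_N}(-x_N)\otimes\cdots\otimes b_{i_1}(-x_1)$ (the paper's identification display; its earlier display listing the factors in the opposite order is a typo). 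With this ordering the factor carrying $x_k$ lies to the right of every factor carrying $x_\ell$ with $\ell>k$, so the signature rule yields exactly the correction term $\sum_{k<\ell\le N}c_{i_k,i_\ell}x_\ell$, i.e.\ $\eta_k(x)$, and since $f_a$ moves the rightmost factor attaining the maximum of the shifted $\varepsilon_a$'s while $e_a$ moves the leftmost, one obtains the minimal index $\ell^{x}$ for $f_a$ and the maximal index $\ell_{x}$ for $e_a$ as claimed; with the ordering $b_{i_1}(-x_1)\otimes\cdots\otimes b_{i_N}(-x_N)$ that you actually wrote down, the literal computation would instead produce $\sum_{\ell<k}$ and the opposite extremal choices. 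Finally, the cleanest justification that the floor term $\varepsilon_a(b_\infty)=0$ never wins (equivalently that $\max_k\eta_k(x)\ge 0$ on $\mathcal{S}_{\ii}$, so the maximum may be taken over $k$ with $i_k=a$ alone) is that $f_a$ preserves the image of the strict embedding $\PPsi_{\ii}$ and hence can never be forced to act on the $b_\infty$-factor.
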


The crystal structure on  $\mathcal{S}_{\ii}$ obtained from $B(\infty)^*$ via the bijection $b\mapsto \str^*_{\ii}(b)$ is given as follows. 

By \cite[Proposition 2.3]{Lit} (see also \cite[Theorem 2.7]{BZ}) we introduce piecewise linear bijections $\Psi^{\ii}_{\jj} : \mathcal{S}_{\ii}^{\mathbb{R}} \rightarrow \mathcal{S}_{\jj}^{\mathbb{R}}$ between the string cones associated to reduced words $\ii, \jj\in \W$ satisfying for $b\in B(\infty)$ 
\begin{equation}\label{stringtrans}
\Psi^{\ii}_{\jj} \circ 
\str_{\ii}(b)=\str_{\jj}(b)
\end{equation}
as follows. If $\jj\in \W$ is obtained from $\ii\in \W$ by a $3$-move at position $k$ we set $y=\Psi_{\jj}^{\ii}(x)$ with 
	$$
	y = (x_1, \dots, x_{k-2},x'_{k-1} , x'_k, x'_{k+1} , x_{k+2}, \dots, x_N ),$$
		$$x'_{k-1}= \max(x_{k+1},x_k-x_{k-1}), \quad x'_k=x_{k+1}+x_{k-1} \text{ and }$$ 
		$$ x'_{k+1}=\min(x_{k-1},x_{k+1}).$$
	If $\jj\in \W$ is obtained from $\ii\in \W$ by a $2$-move at position $k$ we set
	$$\Psi_{\jj}^{\ii} \left(x_1, \dots, x_N \right) = \left( x_1, \dots, x_{k-1}, x_{k+1}, x_k, x_{k+2}, \dots, x_N\right).
	$$
	For arbitrary $\ii, \jj\in \W$ we define $\Psi_{\jj}^{\ii}:\mathcal{S}_{\ii} \rightarrow \mathcal{S}_{\jj}$ as the composition of the transition maps corresponding to a sequence of $2-$ and $3-$moves transforming $\ii$ into $\jj$.

\begin{lem}\label{eq:stringstarop}Let $x \in \mathcal{S}_{\ii},$ $a\in[n-1]$ and $\jj\in\W$ with $j_1=a$. Setting $y:=\Psi^{\ii}_{\jj}(x)\in\mathcal{S}_{\jj}$ we have
\begin{align}
\label{epsiastarstr}
\varepsilon_{a}^*(x)&=y_1,\qquad \wt(x)=-\displaystyle\sum _{k=1}^{N} x_k \alpha_{i_k},\\\notag
f_{a}^*(x) &= \Psi^{\jj}_{\ii}\left(y+(1,0,0,\dots)\right),\\\notag
e_{a}^*(x) &= \begin{cases}
\Psi^{\jj}_{\ii}\left(y-(1,0,0,\dots) \right), & \text{if $\varepsilon_a^*(x)>0,$}\\0,&\text{else.}
\end{cases}
\end{align}
\end{lem}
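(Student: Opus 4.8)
The plan is to reduce everything to the case $j_1 = a$ treated implicitly by the first Kashiwara embedding, using the compatibility \eqref{stringtrans} of the transition maps $\Psi^{\ii}_{\jj}$ with the string parametrization. Concretely, recall from Lemma \ref{imemb} that $\PPsi_a(b) = (e_a^*)^m b \otimes b_a(-m)$ with $m = \varepsilon_a^*(b)$. For a reduced word $\jj$ with $j_1 = a$, the first component of $\str_{\jj}(b)$ is by definition $\varepsilon_{j_1}(b) = \varepsilon_a(b)$; applying this to $b^*$ and invoking \eqref{stringcom} gives $(\str_{\jj}^*(b))_1 = (\str_{\jj}(b^*))_1 = \varepsilon_a(b^*) = \varepsilon_a^*(b)$. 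Since under the identification $x = \str_{\ii}^*(b)$ we have $y = \Psi^{\ii}_{\jj}(x) = \str_{\jj}^*(b)$ by \eqref{stringtrans} (note $\str_{\jj}^*(b) = \str_{\jj}(b^*)$ and $\Psi^{\ii}_{\jj}$ intertwines $\str_{\ii}$ with $\str_{\jj}$), this yields $\varepsilon_a^*(x) = y_1$ and the weight formula, which is immediate from the definitions of $\str_{\ii}^*$ and $\wt$.

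For the operators $f_a^*$ and $e_a^*$, the key observation is that when $j_1 = a$, the $*$-crystal operators act on $\str_{\jj}^*$-data in the simplest possible way: just as $f_a$ on $\str_{\ii}$-data with $i_1 = a$ adds $1$ to the first coordinate (which follows from the $a = i_1$ case of Problem \ref{proplem} discussed in the introduction, here in its $B(\infty)$-incarnation), the operator $f_a^*$ on $\str_{\jj}^*$-data adds $1$ to the first coordinate. I would justify this by transporting through $*$: $f_a^*(b) = (f_a(b^*))^*$, so $\str_{\jj}^*(f_a^* b) = \str_{\jj}(f_a^* b)^{*\,*}$—more carefully, $\str_{\jj}^*(f_a^* b) = \str_{\jj}((f_a^* b)^*) = \str_{\jj}(f_a(b^*))$, and since $j_1 = a$ the formula for $f_a$ on $\str_{\jj}$-data (the trivial $a = j_1$ case) gives $\str_{\jj}(f_a(b^*)) = \str_{\jj}(b^*) + (1,0,\dots,0) = y + (1,0,\dots,0)$. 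Hence $f_a^*(x)$, expressed in $\str_{\ii}^*$-coordinates, is $\Psi^{\jj}_{\ii}(y + (1,0,\dots,0))$, using that $\Psi^{\jj}_{\ii} = (\Psi^{\ii}_{\jj})^{-1}$ is the coordinate change back to direction $\ii$. The formula for $e_a^*$ follows identically, with the case distinction $\varepsilon_a^*(x) > 0$ reflecting $e_a^* b \neq 0 \iff \varepsilon_a^*(b) > 0$, i.e. $y_1 > 0$; when $y_1 = 0$ we have $e_a^* x = 0$.

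The main technical point to get right—and the step I expect to need the most care—is the bookkeeping relating $\Psi^{\ii}_{\jj}$, the $*$-operation, and the two conventions $\str_{\ii}$ versus $\str_{\ii}^*$. The maps $\Psi^{\ii}_{\jj}$ were defined via \eqref{stringtrans} to intertwine the ordinary string parametrizations $\str_{\ii}$ and $\str_{\jj}$, but here we are applying them to $*$-string data; the reconciliation is exactly \eqref{stringcom}–\eqref{string2} of Lemma \ref{lem:str}, which identifies $\str_{\ii}^*(b)$ with $\str_{\ii}(b^*)$ and shows $\mathcal{S}_{\ii} = \str_{\ii}^*(B(\infty))$, so that $\Psi^{\ii}_{\jj} \circ \str_{\ii}^* = \str_{\jj}^*$ as well. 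Once this identification is in place, everything reduces to the elementary behaviour of the crystal operators in the adapted direction, and no genuinely new computation is required; the content is entirely in correctly threading the $*$-twist through the definitions established in Sections \ref{sec:crysrep} and \ref{strinpara}.
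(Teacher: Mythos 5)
Your argument is correct and is essentially the paper's own proof, which simply cites Lemma \ref{imemb} and \eqref{stringtrans}: you have expanded exactly that one-line argument, using \eqref{stringcom} to identify $\Psi^{\ii}_{\jj}\circ\str^*_{\ii}$ with $\str^*_{\jj}$ and the adapted-direction behaviour of $f_{j_1}$, $e_{j_1}$ on string data (equivalently, the content of Lemma \ref{imemb}) to get the $\pm(1,0,\dots,0)$ shift in the first coordinate. No gap; the bookkeeping of the $*$-twist that you flag as the delicate point is handled correctly.
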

\begin{proof}
The statement follows from Lemma \ref{imemb} and \eqref{stringtrans}.
\end{proof}
In Theorem \ref{stringinfty} we give a formula for the crystal structure of Lemma \ref{eq:stringstarop}.

\subsection{String polytopes and their crystals structures}
Let $\lambda\in P^+$ and $\ii\in \W$. Recall from \eqref{blambdainbinfty} that the crystal $B(\lambda)$ is isomorphic to the subcrystal $\widetilde{B}(\lambda)$ of $B(\infty)\otimes R_{\lambda}$. Hence, using \eqref{hw} we get a bijection between $B(\lambda)$ and 
\begin{equation}
\label{strstr}
\mathcal{S}_{\ii}^{*}(\lambda):=\left\{\str_{\ii}(b) \mid b\otimes r_{\lambda}\in B(\infty)\otimes R_{\lambda}, \varepsilon^*_a(b) \le \lambda(h_a) \ \forall a \in [n-1]\right\}.\end{equation}

In \cite[Proposition 1.5]{Lit} it is shown that $\mathcal{S}_{\ii}^{*}(\lambda)$ is the set of integer points of the rational polytope \begin{equation}\label{litstring}
\mathcal{S}_{\ii}^{*}(\lambda)^{\mathbb{R}}=\left\{x \in \mathcal{S}_{\ii}^{\mathbb{R}} \,\middle\mid\, x_k+\displaystyle\sum_{k< \ell \le N}c_{i_k,i_\ell}x_k \le \lambda_{i_k}  \ \forall k \in [N]\right\}\subset \mathbb{R}^N.
\end{equation}
We call $\mathcal{S}_{\ii}^{*}(\lambda)^{\mathbb{R}}$ the \emph{\lsp string polytope}. 

By \eqref{strstr} we obtain the following crystal structure isomorphic to $B(\lambda)$ on $\mathcal{S}_{\ii}^{*}(\lambda) \subset \mathcal{S}_{\ii}$. Denoting by $\iota_{\lambda}: \mathcal{S}_{\ii}^{*}(\lambda) \hookrightarrow  \mathcal{S}_{\ii}$ the natural embedding we obtain
\begin{lem}\label{staronpoln}
For $x\in \mathcal{S}_{\ii}^{*}(\lambda)$ and $a\in[n-1]$ we have
\begin{align}
\notag\begin{split}
\varepsilon_a(x)& =\varepsilon^*_a(\iota_{\lambda}(x)), \qquad \wt(x)=\lambda+\wt(\iota_{\lambda}(x)),\qquad  \iota_{\lambda} {e}_a(x)= {e}^*_a\iota_{\lambda}(x),\\  \iota_{\lambda} {f}_a(x)&=\begin{cases} {f}^*_a\iota_{\lambda}(x) & \text{ if }\varphi_a(x)>0 \\ 0 & \text{ else.}\end{cases}
\end{split}
\end{align}
\end{lem}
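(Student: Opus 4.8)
The plan is to trace through the chain of identifications set up just before the statement. Fix $x \in \mathcal{S}_{\ii}^{*}(\lambda)$ and write $x = \str_{\ii}(b)$ with $b \otimes r_{\lambda} \in \widetilde{B}(\lambda)$, i.e.\ $\varepsilon^{*}_{a}(b) \le \lambda(h_{a})$ for every $a \in [n-1]$; by construction the crystal structure on $\mathcal{S}_{\ii}^{*}(\lambda) \cong B(\lambda)$ is the one transported from the subcrystal $\widetilde{B}(\lambda) \subset B(\infty) \otimes R_{\lambda}$ through $\overline{\pi}_{\lambda}$, so that $\overline{\pi}_{\lambda}(c)$ corresponds to the point $\str_{\ii}(c)$ for $c \otimes r_{\lambda} \in \widetilde{B}(\lambda)$. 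By \eqref{stringcom} we have $\iota_{\lambda}(x) = \str_{\ii}(b) = \str^{*}_{\ii}(b^{*})$; hence, under the identification of Section \ref{crysonstring} equipping $\mathcal{S}_{\ii}$ with the $B(\infty)^{*}$-structure, the point $\iota_{\lambda}(x)$ corresponds to $b^{*} \in B(\infty)$, and $\varepsilon^{*}_{a}(\iota_{\lambda}(x)) = \varepsilon_{a}(b)$, $e^{*}_{a}(\iota_{\lambda}(x)) = \str^{*}_{\ii}((e_{a}b)^{*})$, $f^{*}_{a}(\iota_{\lambda}(x)) = \str^{*}_{\ii}((f_{a}b)^{*})$, using $e^{*}_{a}(c^{*}) = (e_{a}c)^{*}$, $f^{*}_{a}(c^{*}) = (f_{a}c)^{*}$ and that $*$ preserves $\wt$. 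The weight identity is then immediate from $\wt(b \otimes r_{\lambda}) = \wt(b) + \lambda$.

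For the remaining identities I would read off the crystal structure on $\widetilde{B}(\lambda)$ from the tensor product rule of Definition \ref{crystens} together with $\varepsilon_{a}(r_{\lambda}) = -\lambda(h_{a})$, $\varphi_{a}(r_{\lambda}) = 0$, $e_{a}r_{\lambda} = f_{a}r_{\lambda} = 0$. The one non-formal input needed is the inequality $\varphi_{a}(b) \ge -\lambda(h_{a})$ whenever $b \otimes r_{\lambda} \in \widetilde{B}(\lambda)$, which I would deduce from the general bound $\varepsilon_{a}(b) + \varepsilon^{*}_{a}(b) + \wt(b)(h_{a}) \ge 0$ on $B(\infty)$: applying the strict embedding $\PPsi_{a}$ of \eqref{Kashiwaraem}, Lemma \ref{imemb} gives $\PPsi_{a}(b) = (e^{*}_{a})^{m}b \otimes b_{a}(-m)$ with $m = \varepsilon^{*}_{a}(b)$, and since $\PPsi_{a}$ preserves $\varepsilon_{a}$ and $e^{*}_{a}$ raises the weight by $\alpha_{a}$, the tensor product rule yields $\varepsilon_{a}(b) = \max\bigl(\varepsilon_{a}((e^{*}_{a})^{m}b),\, -m - \wt(b)(h_{a})\bigr) \ge -m - \wt(b)(h_{a})$. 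Granting $\varphi_{a}(b) \ge -\lambda(h_{a})$, the tensor product rule gives $\varepsilon_{a}(b \otimes r_{\lambda}) = \varepsilon_{a}(b)$, $e_{a}(b \otimes r_{\lambda}) = (e_{a}b) \otimes r_{\lambda}$, $\varphi_{a}(b \otimes r_{\lambda}) = \max(0, \varphi_{a}(b) + \lambda(h_{a})) = \varphi_{a}(b) + \lambda(h_{a})$, and $f_{a}(b \otimes r_{\lambda}) = (f_{a}b) \otimes r_{\lambda}$ if $\varphi_{a}(b) + \lambda(h_{a}) > 0$, while $f_{a}(b \otimes r_{\lambda}) = b \otimes f_{a}r_{\lambda} = 0$ otherwise.

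Finally I would translate these back through $\overline{\pi}_{\lambda}$, which intertwines $e_{a}$ on nonzero elements and intertwines $f_{a}$ unconditionally: one gets $\varepsilon_{a}(x) = \varepsilon_{a}(b) = \varepsilon^{*}_{a}(\iota_{\lambda}(x))$; one gets $\iota_{\lambda}(e_{a}x) = \str_{\ii}(e_{a}b) = \str^{*}_{\ii}((e_{a}b)^{*}) = e^{*}_{a}(\iota_{\lambda}(x))$, with both sides $0$ when $e_{a}b = 0$, since $\widetilde{B}(\lambda)$ being a subcrystal of $B(\infty) \otimes R_{\lambda}$ forces $e_{a}b \otimes r_{\lambda} \in \widetilde{B}(\lambda)$ whenever $e_{a}b \neq 0$ (so $\overline{\pi}_{\lambda}$ introduces no extra vanishing); and, since $\varphi_{a}(x) = \varphi_{a}(b) + \lambda(h_{a})$, for $\varphi_{a}(x) > 0$ one gets $\iota_{\lambda}(f_{a}x) = \str_{\ii}(f_{a}b) = f^{*}_{a}(\iota_{\lambda}(x))$, whereas $\varphi_{a}(x) = 0$ gives $f_{a}x = 0$. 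I expect the main obstacle, beyond keeping the two layers of identification (the passage $\str_{\ii} \leftrightarrow \str^{*}_{\ii}$ via $*$, and $\overline{\pi}_{\lambda}$) disentangled, to be exactly the point that $f_{a}$ can leave $\widetilde{B}(\lambda)$ only across the wall $\varphi_{a}(x) = 0$ and never by violating a constraint $\varepsilon^{*}_{a'}(\cdot) \le \lambda(h_{a'})$ with $a' \neq a$; this is what the inequality $\varphi_{a}(b) \ge -\lambda(h_{a})$ together with the subcrystal property guarantee.
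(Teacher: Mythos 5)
Your argument is correct and follows exactly the route the paper intends: the paper states Lemma \ref{staronpoln} without proof, as an immediate consequence of the identification \eqref{strstr}, the description \eqref{blambdainbinfty} of $\widetilde{B}(\lambda)\subset B(\infty)\otimes R_\lambda$, and the relation $\str_{\ii}(b)=\str^*_{\ii}(b^*)$ from Lemma \ref{lem:str}. You have merely filled in the omitted verification, and the one genuinely non-formal step you isolate --- the inequality $\varepsilon_a(b)+\varepsilon_a^*(b)+\wt(b)(h_a)\ge 0$, derived correctly from Lemma \ref{imemb} and the tensor product rule, which guarantees $\varphi_a(b)\ge -\lambda(h_a)$ and hence that the tensor product rule collapses onto the first factor --- is exactly the right point to make explicit.
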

In Theorem \ref{crossing} we give a formula for the crystal structure of Lemma \ref{staronpoln}.

\section{Wiring diagrams and Reineke crossings}\label{wiringandrein}

Following \cite{BFZ}, we introduce the notion of a wiring diagram which is a graphical presentation of the reduced word $\ii\in \W$.

\begin{defi}[wiring diagram]\label{defi:wire} 
Let $\ii=(i_1,i_2,\ldots i_N)\in \W$. The \emph{wiring diagram} $\mathcal{D}_{\ii}$ consists of a family of $n$ piecewise straight lines, called \emph{wires}, which can be viewed as graphs of $n$ continuous piecewise linear functions defined on the same interval. The wires have labels in the set $[n]$. Each vertex of $\mathcal{D}_{\ii}$ (i.e. an intersection of two wires) represents a letter $j$ in $\ii$. If the vertex corresponds to the letter $j\in [n-1]$, then $j-1$ is equal to the number of wires running below this intersection. We call $j$ the \emph{level of the vertex} $v$ and write
$$\text{level}(v)=j-1.$$

The word $\ii$ can be read off from $\mathcal{D}_{\ii}$ by reading the levels of the vertices from left to right. 
\end{defi}

\begin{ex}\label{run} Let $n=5$ and $\ii=(2,1,2,3,4,3,2,1,3,2)$. The corresponding wiring diagram $\mathcal{D}_{\ii}$ is depicted below.
\begin{center}

\begin{tikzpicture}[scale=.75]

\node at (-.5,0) {$1$};
\node at (-.5,1) {$2$};
\node at (-.5,2) {$3$};
\node at (-.5,3) {$4$};
\node at (-.5,4) {$5$};
\node at(1.6,-1){$2$};
\node at(3,-1){$1$};
\node at(4.6,-1){$2$};
\node at(5.7,-1){$3$};
\node at(6.8,-1){$4$};
\node at(7.7,-1){$3$};
\node at(8.6,-1){$2$};
\node at(9.6,-1){$1$};
\node at(10.7,-1){$3$};
\node at(11.5,-1){$2$};

\draw (0,0) --(1,0) --  (2,0) -- (3,1) -- (4,1) -- (5,2) -- (6,3) --(7,4) -- (10,4) -- (13,4);
\draw (0,1) -- (1,1) -- (2,2) -- (4,2) -- (5,1) -- (8,1) -- (9,2) -- (10,2) --(11,3) -- (13,3);
\draw (0,2) --(1,2) -- (4,0) -- (9,0) -- (10,1) -- (11,1) -- (12,2) -- (13,2) ;
\draw (0,3) --  (5,3) -- (6,2) -- (7,2) -- (8,3) --(10,3) -- (11,2) -- (12,1) -- (13,1) ;
\draw (0,4) --  (6,4) -- (7,3) -- (8,2) -- (9,1) -- (10,0) -- (13,0) ;
\end{tikzpicture}
\end{center}

\end{ex}
The condition $\ii\in \W$ implies that two lines $p,q$ with $p\ne q$ in $\mathcal{D}_{\ii}$ intersect exactly once. 

Each vertex of the wiring diagram $\mathcal{D}_{\ii}$, $\ii\in \W$, corresponds to an inversion $(p,q)\in I(w_0)$, where $p$ and $q$ are the labels of the wires intersecting in that vertex. Thus the vertices of $\mathcal{D}_{\ii}$ are in bijection with the positive roots by \eqref{vertexroot}. The reflection order on  $I(w_0)$ and the induced total order on $\Phi^+$ can be read off of $\mathcal{D}_{\ii}$ by reading the vertices from left to right. We identify
\begin{equation}
\label{posrootsorder}
[N] \leftrightarrow I(w_0)=\left\{(p,q)\in [n]^2 \,\middle|\, p<q \right\}
\end{equation}
such that $k\in[N]$ corresponds to the $k$-th vertex $(p,q)\in I(w_0)$ in  $\mathcal{D}_{\ii}$ from left.

\begin{ex} We continue with Example \ref{run}. The reflection ordering
$$(2,3)<(1,3)<(1,2)<(1,4)<(1,5)<(4,5)<(2,5)<(3,5)<(2,4)$$
corresponding to $\ii$ is depicted in the wiring diagram $\mathcal{D}_{\ii}$ below.

\begin{center}

\begin{tikzpicture}[scale=.75]

\node at (-.5,0) {$1$};
\node at (-.5,1) {$2$};
\node at (-.5,2) {$3$};
\node at (-.5,3) {$4$};
\node at (-.5,4) {$5$};

\draw (0,0) --(1,0) -- node[xshift=1.05cm,yshift=-0.05cm,above]{$\scriptstyle{(1,3)}$} node[xshift=6.05cm,yshift=-0.3cm,above]{$\scriptstyle{(3,5)}$} (2,0) -- (3,1) -- (4,1) -- (5,2) -- (6,3) --(7,4) -- (10,4) -- (13,4);
\draw (0,1) -- (1,1) -- (2,2) -- (4,2) -- (5,1) -- (8,1) -- (9,2) -- (10,2) --(11,3) -- (13,3);
\draw (0,2) -- node[xshift=0.85cm,yshift=-1cm,above]{$\scriptstyle{(2,3)}$} node[xshift=3.05cm,yshift=-1.1cm,above]{$\scriptstyle{(1,2)}$} node[xshift=6.05cm,yshift=-1.1cm,above]{$\scriptstyle{(2,5)}$} (1,2) -- (4,0) -- (9,0) -- (10,1) -- (11,1) -- (12,2) -- (13,2) ;
\draw (0,3) -- node[xshift=2.28cm,yshift=-1.1cm,above]{$\scriptstyle{(1,4)}$} node[xshift=3.8cm,yshift=-1.1cm,above]{$\scriptstyle{(4,5)}$} node[xshift=6.1cm,yshift=-1.1cm,above]{$\scriptstyle{(2,4)}$} (5,3) -- (6,2) -- (7,2) -- (8,3) --(10,3) -- (11,2) -- (12,1) -- (13,1) ;
\draw (0,4) -- node[xshift=2.65cm,yshift=-1cm,above]{$\scriptstyle{(1,5)}$} (6,4) -- (7,3) -- (8,2) -- (9,1) -- (10,0) -- (13,0) ;
\end{tikzpicture}
\end{center}
\end{ex}

\begin{defi} Let $\ii\in \W$ and $\mathcal{D}_{\ii}$ be the corresponding wiring diagram. For $a\in [n-1]$ we denote by $\mathcal{D}_{\ii}(a)$ the oriented graph obtained from $\mathcal{D}_{\ii}$ by orienting its wires $p$ from left to right if $p\le a$, and from right to left if $p>a$. 
\end{defi}

\begin{ex} Let $a=3$ and $\mathcal{D}_{\ii}$ as in Example \ref{run}. The oriented graph $\mathcal{D}_{\ii}(3)$ looks as follows.

\begin{center}

\begin{tikzpicture}[scale=.75]

\node at (-.5,0) {$1$};
\node at (-.5,1) {$2$};
\node at (-.5,2) {$3$};
\node at (-.5,3) {$4$};
\node at (-.5,4) {$5$};

\draw (0,0) -- node[yshift=-0.033cm]{\textbf{>}} (1,0) -- (2,0) -- (3,1) -- node[yshift=-0.033cm]{\textbf{>}} (4,1) -- (5,2) -- (6,3) --(7,4) -- (10,4) -- node[xshift=-0.8cm,yshift=-0.033cm]{\textbf{>}} (13,4);
\draw (0,1) -- node[yshift=-0.033cm]{\textbf{>}} (1,1) -- (2,2) -- node[yshift=-0.033cm]{\textbf{>}} (4,2) -- (5,1) -- node[yshift=-0.033cm]{\textbf{>}} (8,1) -- (9,2) -- node[yshift=-0.033cm]{\textbf{>}} (10,2) --(11,3) -- node[yshift=-0.033cm]{\textbf{>}} (13,3);
\draw(0,2) -- node[yshift=-0.033cm]{\textbf{>}} (1,2) -- (4,0) -- node[yshift=-0.033cm]{\textbf{>}} (9,0) -- (10,1) -- node[yshift=-0.033cm]{\textbf{>}} (11,1) -- (12,2) -- (13,2) ;
\draw (0,3) -- node[yshift=-0.033cm]{\textbf{<}} (5,3) -- (6,2) -- node[yshift=-0.033cm]{\textbf{<}} (7,2) -- (8,3) -- node[yshift=-0.033cm]{\textbf{<}} (10,3) -- (11,2) -- (12,1) -- (13,1) ;
\draw (0,4) -- node[xshift=-0.3cm,yshift=-0.033cm]{\textbf{>}} (6,4) -- (7,3) -- (8,2) -- (9,1) -- (10,0) -- node[yshift=-0.033cm]{\textbf{<}} (13,0) ;
\end{tikzpicture}
\end{center}

\end{ex}

An oriented path in $\mathcal{D}_{\ii}(a)$ is a sequence $(v_1,\ldots,v_k)$ of vertices of $\mathcal{D}_{\ii}$ which are connected by oriented edges $v_1 \rightarrow v_1 \rightarrow \ldots \rightarrow v_{k}$ in $\mathcal{D}_{\ii}(a)$. 

\begin{defi}[Reineke crossings]\label{def:rigpath} For $a\in [n-1]$ an \emph{$a$-crossing} is an oriented path $\gamma=(v_1,\ldots,v_k)$ in $\mathcal{D}_{\ii}(a)$ which starts with the leftmost vertex of the wire $a$ and ends with the leftmost vertex of the wire $a+1$. Additionally $\gamma$ satisfies the following condition: Whenever $v_j, v_{j+1}, v_{j+2}$ lie on the same wire $p$ in $\mathcal{D}_{\ii}$ and the vertex $v_{j+1}$ lies on the intersection the wires $p$ and $q$, we have
\begin{align*}
p > q & \quad \text{if }q\le a \\ 
p<q & \quad \text{if }a+1 \le q. \end{align*}
In other words, the path $\gamma$ avoids the following two fragments.
\begin{center}
\begin{tikzpicture}[scale=.75]

\draw[line width=0.6mm] (0,0)node[below left]{p} -- (2,2)node[thick]{\large{\rotatebox{40}{\textbf{>}}}};
\draw (2,0)node[thick]{\large{\rotatebox{320}{\textbf{>}}}}  -- (0,2)node[above left]{q};
\draw (4,0) node[below left]{q} node{\large{\rotatebox{220}{\textbf{>}}}} -- (6,2);
\draw[line width=0.6mm] (6,0) -- (4,2)node[above left]{p}node[thick]{\large{\rotatebox{140}{\textbf{>}}}};
\end{tikzpicture}
\end{center}
We denote the set of all $a$-Reineke crossings by $\Gamma_a$.
\end{defi}

\begin{rem} Reineke crossings appear as rigorous paths in \cite{GP}.
\end{rem}

\begin{ex}\label{rigpathex}
Let $n=5$. The vertices lying on the red path below form the $3-$Reineke crossing $\gamma=(v_{3,2},v_{3,1},v_{1,2},v_{2,5},v_{2,4},v_{4,5},v_{4,1})$.

\begin{center}

\begin{tikzpicture}[scale=.75]

\node at (-.5,0) {$1$};
\node at (-.5,1) {$2$};
\node at (-.5,2) {$3$};
\node at (-.5,3) {$4$};
\node at (-.5,4) {$5$};

\draw (0,0) -- node[yshift=-0.033cm]{\textbf{>}} (1,0) -- (2,0) -- (3,1) -- node[yshift=-0.033cm]{\textbf{>}} (4,1) -- (5,2) -- (6,3) --(7,4) -- (10,4) -- node[xshift=-0.8cm,yshift=-0.033cm]{\textbf{>}} (13,4);
\draw (0,1) -- node[yshift=-0.033cm]{\textbf{>}} (1,1) -- (2,2) -- node[yshift=-0.033cm]{\textbf{>}} (4,2) -- (5,1) -- node[yshift=-0.033cm]{\textbf{>}} (8,1) -- (9,2) -- node[yshift=-0.033cm]{\textbf{>}} (10,2) --(11,3) -- node[yshift=-0.033cm]{\textbf{>}} (13,3);
\draw[line width=1.25mm, red] (0,2) -- (1,2)-- (2.8,0.7) -- (3,1) --  (4,1) -- (4.5,1.5) -- (5,1) -- (8,1) -- (9,2) -- (10,2) -- (10.5,2.5) -- (10,3) -- (8,3) -- (7,2) -- (6,2) -- (5,3) -- (0,3); 
\draw (2.8,0.7) -- (4,0) -- node[yshift=-0.033cm]{\textbf{>}} (9,0) -- (10,1) -- node[yshift=-0.033cm]{\textbf{>}} (11,1) -- (12,2) -- (13,2) ;
\draw (0,3) -- node[yshift=-0.033cm]{\textbf{<}} (5,3) -- (6,2) -- node[yshift=-0.033cm]{\textbf{<}} (7,2) -- (8,3) -- node[yshift=-0.033cm]{\textbf{<}} (10,3) -- (11,2) -- (12,1) -- (13,1) ;
\draw (0,4) -- node[xshift=-0.3cm,yshift=-0.033cm]{\textbf{<}} (6,4) -- (7,3) -- (8,2) -- (9,1) -- (10,0) -- node[yshift=-0.033cm]{\textbf{<}} (13,0) ;
\end{tikzpicture}

\end{center}
\end{ex}

In the remainder of this section we adopt the following convention: We label each vertex $v=v_{p,q}\in\gamma$ by the wires $p$ and $q$ that intersect in this edge where $p$ is the wire of the oriented edge whose source in $\gamma$ is $v_{p,q}$.

\begin{defi}\label{turn} Let $a\in [n-1]$ and $\gamma=(v_{p_1,q_1},v_{p_2,q_2},\ldots,v_{p_m,q_m})\in \Gamma_a$. We call the set of vertices $v_{p_s,q_s}$ such that $p_{s+1}=q_s$ the turning points $T_{\gamma}$ of $\gamma$. 
\end{defi}

\begin{ex} For $\gamma=(v_{3,2},v_{3,1},v_{1,2},v_{2,5},v_{2,4},v_{4,5},v_{4,1})$ as in Example \ref{rigpathex} we have $T_{\gamma}=\{v_{3,1},v_{1,2},v_{2,4} \}$.
\end{ex}

Using the identification \eqref{posrootsorder} we introduce
\begin{defi}\label{def:vectors} The maps $r:\Gamma_a \rightarrow \mathbb{Z}^N$ and $s:\Gamma_a \rightarrow \mathbb{Z}^N$ are given by		
\begin{align*}
\left(r({\gamma})\right)_{p,q} &:= \begin{cases} \sgn(q-p), &  \text{if $v_{p,q}\in T_{\gamma},$} \\
 0, & \text{else,}\end{cases} \\
\left(s({\gamma})\right)_{p,q} &:= \begin{cases} 1, & \text{if $v_{p,q}\in \gamma,$ $p\le a <q \text{ or }q\le a <p,$} \\
-1, & v_{p,q}\in \gamma\setminus T_{\gamma}, \ a<p,q \text{ or } p,q\le a,
\\ 0 & \text{else.}\end{cases}
\end{align*}
\end{defi}

\begin{ex} Let $\gamma=(v_{3,2},v_{3,1},v_{1,2},v_{2,5},v_{2,4},v_{4,5},v_{4,1})$ be as in Example \ref{rigpathex}. We have $$r({\gamma})=(0,-1,1,0,0,0,0,0,1,0), \quad s({\gamma})=(-1,0,0,1,0,-1,1,0,1,0).$$
\end{ex}

By \cite[Proposition 2.2]{GKS1} we have the following order relation $\preceq$ on $\Gamma_a$:
\begin{defi} Let $\gamma_1,\gamma_2\in \Gamma_a$. We say $\gamma_1 \preceq \gamma_2$ if all vertices of $\gamma_1$ lie in the region of $\mathcal{D}_{\ii}$ cut out by $\gamma_2$.
\end{defi}

\begin{ex} 
	Let $\gamma$ be as in Example \ref{rigpathex} and $\gamma'=(v_{3,2},v_{2,1},v_{1,4})$. In the picture below the region cut out by $\gamma$ is shaded grey while $\gamma'$ consists of all vertices lying on the red path. Thus $\gamma'\preceq \gamma$.
	
	\begin{center}
		
		\begin{tikzpicture}[scale=.75]
		
		\node at (-.5,0) {$1$};
		\node at (-.5,1) {$2$};
		\node at (-.5,2) {$3$};
		\node at (-.5,3) {$4$};
		\node at (-.5,4) {$5$};
		
		\draw[fill=lightgray!50] plot[smooth, samples=100, domain=1:e]  (0,2) -- (1,2)-- (2.8,0.7) -- (3,1) --  (4,1) -- (4.5,1.5) -- (5,1) -- (8,1) -- (9,2) -- (10,2) -- (10.5,2.5) -- (10,3) -- (8,3) -- (7,2) -- (6,2) -- (5,3) -- (0,3);
		\draw[line width=1.25mm, red] (0,2) -- (1,2)-- (1.55,1.55) -- (2.1,2)--(4.0,2)--(4.5,1.5)--(5.49,2.5)--(5.0,3.0)--(0,3);
		\draw (0,0) -- node[yshift=-0.033cm]{\textbf{>}} (1,0) -- (2,0) -- (3,1) -- node[yshift=-0.033cm]{\textbf{>}} (4,1) -- (5,2) -- (6,3) --(7,4) -- (10,4) -- node[xshift=-0.8cm,yshift=-0.033cm]{\textbf{>}} (13,4);
		\draw (0,1) -- node[yshift=-0.033cm]{\textbf{>}} (1,1) -- (2,2) -- node[yshift=-0.033cm]{\textbf{>}} (4,2) -- (5,1) -- node[yshift=-0.033cm]{\textbf{>}} (8,1) -- (9,2) -- node[yshift=-0.033cm]{\textbf{>}} (10,2) --(11,3) -- node[yshift=-0.033cm]{\textbf{>}} (13,3);
		\draw (2.8,0.7) -- (4,0) -- node[yshift=-0.033cm]{\textbf{>}} (9,0) -- (10,1) -- node[yshift=-0.033cm]{\textbf{>}} (11,1) -- (12,2) -- (13,2) ;
		\draw (0,3) -- node[yshift=-0.033cm]{\textbf{<}} (5,3) -- (6,2) -- node[yshift=-0.033cm]{\textbf{<}} (7,2) -- (8,3) -- node[yshift=-0.033cm]{\textbf{<}} (10,3) -- (11,2) -- (12,1) -- (13,1) ;
		\draw (0,4) -- node[xshift=-0.3cm,yshift=-0.033cm]{\textbf{<}} (6,4) -- (7,3) -- (8,2) -- (9,1) -- (10,0) -- node[yshift=-0.033cm]{\textbf{<}} (13,0) ;
		
		\end{tikzpicture}
		
	\end{center}
\end{ex}

\section{Dual Crossing Formula for string parametrizations}\label{sec:mainthm}
Let $\lambda \in P^+$ and $\ii\in \W$. In this section we state our main result which is a formula for the crystal structure on the integer points of the \lsp string polytope $\mathcal{S}^*_{\ii}(\lambda)^{\mathbb{R}}$ defined in \eqref{litstring}.

Recall the notion of the set of $a$-Reineke crossings $\Gamma_a$ from Definition \ref{def:rigpath} and their associated vectors from Definition \ref{def:vectors}. We denote by $\langle\cdot ,\cdot\rangle$ the standard scalar product on $\mathbb{Z}^N$. The crystal structure on $\mathcal{S}^*_{\ii}(\lambda)$ from Lemma \ref{staronpoln} is explicitly computed by 
\begin{thm}\label{crossing} For $\lambda\in P^+$, $a\in[n-1]$ and $x\in \mathcal{S}^*_{\ii}(\lambda)$ we have 
	\begin{align} \label{epsilon}
	\varepsilon_a(x)&=\max\left\{\left\langle x,r({\gamma})\right\rangle \,\middle\mid\, \gamma\in \Gamma_a\right\},\\\label{wts}
	\wt(x)&=\lambda  - \displaystyle\sum_{k\in [N]}x_k\alpha_{i_k},\\
	 \label{f}
	{f}_a (x)& = \begin{cases} x+s({\gamma^{x}}), & \text{if } \varphi_a(x)>0,\\
	0, & \text{else,} \end{cases} \\ \label{e}
	{e}_a (x) &= \begin{cases} x+s({\gamma_{x}}), & \text{if }\varepsilon_a(x)>0,\\
	0, & \text{else,} \end{cases}
	\end{align}
where $\gamma^{x}\in\Gamma_a$ is minimal with $\langle x,r({\gamma^{x}})\rangle=\varepsilon_a(x)$ and  $\gamma_{x}\in\Gamma_a$ is maximal with $\langle x,r({\gamma_{x}})\rangle=\varepsilon_a(x)$.
\end{thm}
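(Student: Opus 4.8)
\textbf{Proof strategy for Theorem \ref{crossing}.}

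The plan is to reduce the statement about the $*$-crystal structure on the \lsp string polytope to a statement about ordinary Lusztig data via the duality between string and Lusztig parametrizations, and then to invoke the Crossing Formula of \cite{GKS1} for Lusztig data (as announced in the discussion following the theorem statement). Concretely, by Lemma \ref{staronpoln} the operators $\varepsilon_a,{f}_a,{e}_a$ and $\wt$ on $\mathcal{S}_{\ii}^*(\lambda)$ are obtained by restriction from the $*$-crystal structure on $\mathcal{S}_{\ii}$ described in Lemma \ref{eq:stringstarop}; hence it suffices to prove the corresponding formulae on the string cone $\mathcal{S}_{\ii}$, which is exactly the content we will later isolate as Theorem \ref{stringinfty}. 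So the first step is to forget $\lambda$ and work on $\mathcal{S}_{\ii}=\str_\ii^*(B(\infty))$ with the crystal structure transported from $B(\infty)^*$.

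Next I would pass from the string cone to Lusztig's parametrization. Recall (this is what Section \ref{sec:Lus} will set up, following \cite{GKS1}) that there is a piecewise-linear bijection — built from the tropicalized cluster/transition maps, or equivalently from Lusztig's transition maps — between $\mathcal{S}_\ii$ and the Lusztig cone for $\ii$, intertwining the $*$-crystal operators on string data with the ordinary crystal operators on Lusztig data. Under this bijection the linear functionals $r(\gamma)$ and the translation vectors $s(\gamma)$ attached to an $a$-Reineke crossing $\gamma$ are precisely the transforms of the corresponding data appearing in \cite[Theorem 2.13, Proposition 2.20]{GKS1}, with the role of maximum and minimum (and of $r$ and $s$) interchanged, as the remark after the theorem indicates. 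Thus $\varepsilon_a(x)=\max_\gamma\langle x,r(\gamma)\rangle$ becomes the Lusztig-side statement that $\varepsilon_a$ is computed by minimizing the dual functional over $\Gamma_a$, which is \cite{GKS1}; and the formula ${f}_a x = x + s(\gamma^x)$ with $\gamma^x$ minimal (resp. ${e}_a x = x + s(\gamma_x)$ with $\gamma_x$ maximal) corresponds to the Lusztig-side crossing formula for the raising/lowering operators. The weight formula \eqref{wts} is immediate from Lemma \ref{staronpoln} together with \eqref{epsiastarstr}, since $\wt(\iota_\lambda(x)) = -\sum_k x_k\alpha_{i_k}$.

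The key technical steps are therefore: (i) make precise the dictionary between $a$-Reineke crossings and the combinatorial objects (rigorous paths / rigid crossings) used in \cite{GKS1}, checking that the order relation $\preceq$ and the maps $r,s$ match up under the string-to-Lusztig transition map — here the point is that both parametrizations are read off the same wiring diagram $\mathcal{D}_\ii$, so the crossings are literally the same paths, only the assignment of functionals/vectors is dualized; (ii) verify that ``$\gamma$ minimal with $\langle x, r(\gamma)\rangle$ maximal'' is the correct selection rule, i.e. that when several crossings achieve the extremum the minimal one (in $\preceq$) gives the vertex reached by ${f}_a$, which amounts to tracing through the tensor-product rule of Definition \ref{crystens} iterated along $\ii$ as encoded in Lemma \ref{strBiso}/\ref{eq:stringstarop}; and (iii) confirm well-definedness, namely that $x + s(\gamma^x)$ again lies in $\mathcal{S}_\ii$ (resp. in the polytope when $\varphi_a(x)>0$), which follows from axiom (C3) once the formula for $\varepsilon_a$ after applying ${f}_a$ is checked to decrease appropriately. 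I expect the main obstacle to be step (ii): pinning down exactly which extremal crossing is selected, and showing the selection is consistent between the $\varepsilon_a$-formula and the ${f}_a$/${e}_a$-formulae. This is the same subtlety that appears in \cite{GKS1}, and the cleanest route is probably to reduce, via sequences of $2$- and $3$-moves and the explicit transition maps $\Psi^\ii_\jj$ of Lemma \ref{eq:stringstarop}, to the case $i_1=a$ where ${f}_a^*$ simply adds $1$ in the first coordinate and the minimal crossing is the trivial one-vertex path — and then to propagate the identity along the moves, using that the combinatorics of crossings, the functionals $r(\gamma)$, and the vectors $s(\gamma)$ transform compatibly under $2$- and $3$-moves. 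The actual propagation is a finite case check on the three types of moves, which I would carry out in Section \ref{mainproof}.
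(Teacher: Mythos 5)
Your high-level idea --- transfer the Crossing Formula for Lusztig data from \cite{GKS1} to string data --- is indeed the paper's strategy, but the mechanism you propose for the transfer does not work as stated, and that mechanism is where the actual content of the proof lies. You first reduce to the string cone and then invoke ``a piecewise-linear bijection between $\mathcal{S}_{\ii}$ and the Lusztig cone intertwining the $*$-crystal operators with the ordinary ones'' (this would be $\str_{\ii}\circ b_{\ii}$), asserting that $r(\gamma)$ and $s(\gamma)$ are ``precisely the transforms'' of the Lusztig-side data. A piecewise-linear map does not carry linear functionals to linear functionals, so $\varepsilon_a^*(x)=\max_\gamma\langle x,r(\gamma)\rangle$ cannot be obtained by formally pushing $\max_\gamma\langle\cdot,s(\gamma)\rangle$ through that bijection; worse, a genuine crystal isomorphism matches $f_a^*$ with the Lusztig-side $f_a$, which by Theorem \ref{crossL} selects the \emph{maximal} crossing and adds $r(\gamma_x)$, whereas the claim is that $f_a^*$ selects the \emph{minimal} crossing and adds $s(\gamma^x)$ --- the min/max and $r/s$ swaps you describe as ``duality'' are exactly what must be proved and do not follow from that map. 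The paper instead works at the polytope level, where there is an \emph{affine-linear} bijection $G_{\ii}^{\lambda}(x)=\underline{\lambda}-F_{\ii}(x)$ from $\mathcal{S}^*_{\ii}(\lambda)$ onto $\mathcal{L}_{\ii}(\lambda^*)$ (Proposition \ref{prop:SL}); it is weight-negating and hence interchanges $e_a$ and $f_a$ (Lemma \ref{crysint}), which is why minimal crossings stay minimal. The two technical inputs missing from your proposal are the identity $r=F_{\ii}\circ s$ on $\Gamma_a$ (Proposition \ref{Fs}) and the computation that $\langle G_{\ii}^{\lambda}(x),s(\gamma)\rangle-\langle x,r(\gamma)\rangle=\wt(x)(h_a)$ is \emph{independent of} $\gamma$ (Proposition \ref{wt}, resting on $\ell_b(s(\gamma))=\delta_{a,b}$, proved by a local analysis of levels along the path). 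This $\gamma$-independence is what lets both the extremal value and the selected extremal crossing transfer, and it converts $\varphi_a$ on the Lusztig side into $\varepsilon_a$ on the string side via axiom (C1). Note also that the paper deduces the cone statement (Theorem \ref{stringinfty}) \emph{from} the polytope statement by exhausting $\mathcal{S}_{\ii}$ by the $\mathcal{S}^*_{\ii}(\lambda)$ --- the opposite direction to your first reduction, forced by the fact that $G_{\ii}^{\lambda}$ only exists for fixed $\lambda$.

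Your fallback route --- verify the case $i_1=a$ and propagate along $2$- and $3$-moves via $\Psi^{\ii}_{\jj}$ --- is in principle viable but is a different and substantially harder argument than you indicate: under a $3$-move the transition map is tropical, the set $\Gamma_a$ changes, and one must track how the maximization over $\Gamma_a$ and the choice of minimal maximizer interact with the regions of linearity. None of that case analysis is carried out, so as written the proof has a gap at its central step.
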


Theorem \ref{crossing} is proved in Section \ref{mainproof}. A formula for the $*$-crystal structure on $\mathcal{S}_{\ii}$ given in Lemma \ref{eq:stringstarop} can directly deduced from Theorem \ref{crossing}:
\begin{thm}[Dual Crossing Formula]\label{stringinfty} For $a\in [n-1]$ and $x\in \mathcal{S}_{\ii}$ we have 
\begin{align*} 
\varepsilon_a^*(x)&=\max\left\{\left\langle x,r({\gamma})\right\rangle \,\middle\mid\, \gamma\in \Gamma_a\right\},\\
{f}_a^* (x) &= x+s({\gamma^{x}}),\\
{e}_a^* (x) &= \begin{cases} x+s({\gamma_{x}}), & \text{if }\varepsilon_a(x)>0, \\
0, & \text{else,} \end{cases}
\end{align*}
where $\gamma^{x}\in\Gamma_a$ is minimal with $\langle x,r({\gamma^{x}})\rangle=\varepsilon_a^*(x)$ and  $\gamma_{x}\in\Gamma_a$ is maximal with $\langle x,r({\gamma_{x}})\rangle=\varepsilon_a^*(x)$.
\end{thm}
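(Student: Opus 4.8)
The plan is to deduce Theorem~\ref{stringinfty} from Theorem~\ref{crossing} by exhausting the string cone $\mathcal{S}_{\ii}$ with the string polytopes $\mathcal{S}^*_{\ii}(\lambda)$ as $\lambda$ ranges over $P^+$. The key point is the description \eqref{blambdainbinfty}, which says that $\widetilde{B}(\lambda)$ consists exactly of those $b\otimes r_\lambda$ with $\varepsilon^*_a(b)\le \lambda(h_a)$ for all $a$; equivalently, by the inequality presentation \eqref{litstring} of $\mathcal{S}^*_{\ii}(\lambda)^{\mathbb{R}}$, every $x\in\mathcal{S}_{\ii}$ lies in $\mathcal{S}^*_{\ii}(\lambda)$ for all sufficiently dominant $\lambda$ (take each $\lambda_a$ large enough that $\eta_k(x)\le \lambda_{i_k}$ holds for all $k$). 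So fix $x\in\mathcal{S}_{\ii}$ and $a\in[n-1]$, and choose $\lambda\in P^+$ large enough that $x\in\mathcal{S}^*_{\ii}(\lambda)$ and, in addition, $x+s(\gamma^{x})\in\mathcal{S}^*_{\ii}(\lambda)$ (one more application of the same observation to the finitely many candidate image points, or just to the single point $f^*_a(x)$ once we know it lies in $\mathcal{S}_{\ii}$).

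Next I would translate the two crystal structures. Under the identification of $\mathcal{S}^*_{\ii}(\lambda)$ with $\widetilde{B}(\lambda)\cong B(\lambda)$ via $\str_{\ii}$, Lemma~\ref{staronpoln} tells us that $\varepsilon_a$ on $\mathcal{S}^*_{\ii}(\lambda)$ equals $\varepsilon^*_a\circ\iota_\lambda$, that $\iota_\lambda e_a = e^*_a\iota_\lambda$, and that $\iota_\lambda f_a = f^*_a\iota_\lambda$ whenever $\varphi_a(x)>0$. Since $\iota_\lambda$ is just the inclusion $\mathcal{S}^*_{\ii}(\lambda)\hookrightarrow\mathcal{S}_{\ii}$ (both realized inside the same $\Z^N$), these identities say precisely that the $B(\infty)^*$-operators $\varepsilon^*_a, e^*_a, f^*_a$ on $\mathcal{S}_{\ii}$ restrict, on the subset $\mathcal{S}^*_{\ii}(\lambda)$, to the $B(\lambda)$-operators computed by Theorem~\ref{crossing}. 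Therefore, for our chosen large $\lambda$, Theorem~\ref{crossing} gives
\begin{align*}
\varepsilon^*_a(x) &= \varepsilon_a(x) = \max\left\{\langle x, r(\gamma)\rangle \mid \gamma\in\Gamma_a\right\},\\
e^*_a(x) &= e_a(x) = x + s(\gamma_x) \quad\text{if } \varepsilon_a(x)>0,
\end{align*}
and $e^*_a(x)=0$ if $\varepsilon^*_a(x)=0$ (note $\varepsilon^*_a$ is never $-\infty$ on $B(\infty)$, so the condition $\varepsilon_a(x)>0$ governs the vanishing, matching the statement). For $f^*_a$ the only subtlety is the case distinction in \eqref{f}: I need to rule out the branch $\varphi_a(x)\le 0$. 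But $\varphi_a$ in $B(\lambda)$ satisfies $\varphi_a(x)=\varepsilon_a(x)+\wt(x)(h_a)$ by (C1), and as $\lambda$ grows the weight $\wt(x)=\lambda-\sum x_k\alpha_{i_k}$ becomes arbitrarily dominant, so $\varphi_a(x)>0$ for $\lambda$ large; hence $f_a(x)=x+s(\gamma^{x})$, and by Lemma~\ref{staronpoln} this equals $f^*_a(x)$. In particular $f^*_a(x)\ne 0$ for all $x\in\mathcal{S}_{\ii}$, consistent with the fact that $f^*_a$ is never $0$ on $B(\infty)$.

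The main obstacle — really the only thing requiring care — is the bookkeeping around choosing $\lambda$ and making sure the quantities computed by Theorem~\ref{crossing} are genuinely independent of $\lambda$: the sets $\Gamma_a$, the vectors $r(\gamma), s(\gamma)$, and hence $\gamma^x$ and $\gamma_x$ depend only on $\ii$ and $a$, not on $\lambda$, so the right-hand sides in Theorem~\ref{crossing} stabilize. I would also remark that the $\varphi$-condition disappears precisely because on $B(\infty)$ (equivalently on the full cone) the lowering operators $f^*_a$ are everywhere defined, which is the conceptual reason the $f^*_a$ formula has no case split while the $e^*_a$ formula retains the $\varepsilon_a(x)>0$ alternative inherited from $e_a$ on $B(\lambda)$. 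Writing this out, together with one line invoking \eqref{stringcom} or Lemma~\ref{eq:stringstarop} to confirm we are indeed describing the crystal structure of Lemma~\ref{eq:stringstarop}, completes the argument.
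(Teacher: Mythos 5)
Your proposal is correct and follows essentially the same route as the paper: exhaust the string cone by the polytopes $\mathcal{S}^*_{\ii}(\lambda)$, choose $\lambda$ dominant enough that the relevant points lie in $\mathcal{S}^*_{\ii}(\lambda)$ with $\varphi_a>0$, and then transfer Theorem \ref{crossing} through Lemma \ref{staronpoln}. The paper's proof is a two-line version of exactly this argument; your write-up merely supplies the bookkeeping (in particular the observation that $\varphi_a(x)=\varepsilon_a(x)+\wt(x)(h_a)$ becomes positive for large $\lambda$, which removes the case split in \eqref{f}) that the paper leaves implicit.
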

\begin{proof} 	
	Since $\mathcal{S}_{\ii}=\cup_{\lambda\in P^+} \mathcal{S}^*_{\ii}(\lambda)$ we can find for each $x\in \mathcal{S}_{\ii}$ a $\lambda\in P^+$ such that $f_a^* x\in  \mathcal{S}^*_{\ii}(\lambda)=\{x \in \mathcal{S}_{\ii} \mid \varepsilon_{a}(x)\le \lambda_a \ \forall a \in [n-1]\}$. Thus the claim follows from Lemma \ref{staronpoln} and Theorem \ref{crossing}.
\end{proof}
\begin{rem}
The $*$-crystal structure on the string cone $\mathcal{S}_{\ii}$ is dual to the crystal structure on Lusztig data, which is governed by the Crossing Formula \ref{crossL} recalled below. By duality we understand the following: Maximum and minimum swap place as do the maps $r:\Gamma_a \rightarrow \Z^N$ and $s:\Gamma_a\rightarrow \Z^N$.

The $*$-crystal structure on Lusztig data $x\in\N^N$ is described by the $*$-Crossing Formula \cite[Theorem 2.20]{GKS1}, which is completely analogous to the Crossing Formula for Lusztig data. In \cite[Theorem 4.4]{GKS1} we show that $\mathcal{S}_{\ii}$ is polar to the set 
\begin{equation}
\label{slpolar}
\mathbf{R}^*=\left\{f_a^*x -x \,\middle|\, a\in[n-1],\, x\in\N^N \right\},
\end{equation}
i.e. the vectors $f_a^*x-x$ of the $*$-crystal structure on Lusztig data provide defining inequalities for $\mathcal{S}_{\ii}$. For the special case of reduced words adapted to quivers \eqref{slpolar} was obtained in \cite{Ze}.

Similarly, the set of Lusztig data $\N^N$ is polar to
$$
\left\{f_a x -x \,\middle|\, x\in\mathcal{S}_{\ii} \right\} = \left\{(\delta_{k, \ell})_{k\in[N]} \,\middle|\, \ell\in [N] \right\},
$$
i.e. the vectors $f_a x - x$ of the crystal structure \eqref{scrys} on $\mathcal{S}_{\ii}$ provide defining inequalities for the cone of Lusztig data $\N^N$.
\end{rem}
	
\section{Defining inequalities of Nakashima-Zelevinsky string polytopes}\label{sec:NZ}
Theorem \ref{crossing} provides a formula for the crystal structure on the \lsp string polytope $\mathcal{S}^*_{\ii}(\lambda)$. Switching the roles of $B(\infty)$ and $B(\infty)^*$ in the definition of $\mathcal{S}^*_{\ii}(\lambda)$ one arrives at
\begin{equation*}
\mathcal{S}_{\ii}(\lambda):=\{x\in \mathcal{S}_{\ii} \mid \varepsilon^*_a(x) \le \lambda_a \ \forall a \in [n-1]\}.
\end{equation*}
Building up on \cite{NZ}, $\mathcal{S}_{\ii}(\lambda)$ and its crystal structure is defined in \cite{N99}.

By Lemma \ref{eq:stringstarop} the set $\mathcal{S}_{\ii}(\lambda)$ consists of the integer points of the \emph{Nakashima-Zelevinsky string polytope} 
\begin{equation*}
\mathcal{S}_{\ii}(\lambda)^{\R}:=\{x\in \mathcal{S}_{\ii}^{\R} \mid \varepsilon^*_a(x) \le \lambda_a \ \forall a \in [n-1]\},
\end{equation*}
where $\varepsilon^*_a$ on $\mathcal{S}_{\ii}(\lambda)^{\R}$ is defined as in \eqref{epsiastarstr}. By \cite{FN} the convex polytope $\mathcal{S}_{\ii}(\lambda)^{\R}$ is rational. In this section we solve the problem of deriving defining inequalities for $\mathcal{S}_{\ii}(\lambda)^{\R}\subset\R^N.$

The Dual Crossing Formula (Theorem \ref{stringinfty}) immediately implies
\begin{thm}\label{inequpol} The set $\mathcal{S}_{\ii}(\lambda)^{\mathbb{R}}\subset \mathcal{S}_{\ii}^{\mathbb{R}}$ is explicitly described by
$$\mathcal{S}_{\ii}(\lambda)^{\mathbb{R}}=\left\{x \in \mathcal{S}_{\ii}^{\mathbb{R}} \,\middle\mid \,
 \left\langle x,r({\gamma}) \right\rangle \le \lambda_a \text{ for all }a\in [n-1] \text{ and for all } \gamma \in \Gamma_a \right\}.$$
\end{thm}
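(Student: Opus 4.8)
The plan is to derive Theorem~\ref{inequpol} directly from the Dual Crossing Formula (Theorem~\ref{stringinfty}) together with the very definition of $\mathcal{S}_{\ii}(\lambda)^{\R}$. By definition,
\[
\mathcal{S}_{\ii}(\lambda)^{\R}=\left\{x\in \mathcal{S}_{\ii}^{\R} \,\middle|\, \varepsilon^*_a(x)\le \lambda_a \ \forall a\in[n-1]\right\},
\]
so the whole content of the theorem is the identity
\[
\varepsilon^*_a(x)=\max\left\{\langle x, r(\gamma)\rangle \,\middle|\, \gamma\in\Gamma_a\right\}
\]
as functions on $\mathcal{S}_{\ii}^{\R}$, which is precisely the first line of Theorem~\ref{stringinfty}. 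Thus the first step is simply to invoke that formula to replace the condition $\varepsilon^*_a(x)\le\lambda_a$ by the condition $\max_{\gamma\in\Gamma_a}\langle x, r(\gamma)\rangle\le\lambda_a$.

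The second step is the elementary observation that for a finite set of real numbers, $\max_i t_i \le c$ holds if and only if $t_i\le c$ for every $i$. Applying this with $t_\gamma=\langle x, r(\gamma)\rangle$ and $c=\lambda_a$, the single inequality $\max_{\gamma\in\Gamma_a}\langle x, r(\gamma)\rangle\le \lambda_a$ is equivalent to the family of inequalities $\langle x, r(\gamma)\rangle\le\lambda_a$ ranging over all $\gamma\in\Gamma_a$. Doing this for each $a\in[n-1]$ and intersecting, we obtain exactly
\[
\mathcal{S}_{\ii}(\lambda)^{\R}=\left\{x\in\mathcal{S}_{\ii}^{\R} \,\middle|\, \langle x, r(\gamma)\rangle\le\lambda_a \text{ for all } a\in[n-1] \text{ and all }\gamma\in\Gamma_a\right\},
\]
which is the claim.

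One small point worth addressing explicitly is well-definedness: Theorem~\ref{stringinfty} a priori gives the formula for $\varepsilon^*_a$ on $\mathcal{S}_{\ii}$ (integer points), whereas the statement concerns the real cone $\mathcal{S}_{\ii}^{\R}$ and the function $\varepsilon^*_a$ extended to it via \eqref{epsiastarstr}. Since both $x\mapsto\varepsilon^*_a(x)$ as defined through \eqref{epsiastarstr} and $x\mapsto\max_{\gamma\in\Gamma_a}\langle x,r(\gamma)\rangle$ are piecewise-linear functions on the rational cone $\mathcal{S}_{\ii}^{\R}$ (the former by Lemma~\ref{eq:stringstarop}, being built from the piecewise-linear maps $\Psi^{\ii}_{\jj}$, and the latter as a maximum of linear functions) which agree on the set of integer points $\mathcal{S}_{\ii}$, and since the integer points are Zariski-dense in (equivalently, span) each maximal cone of a common rational polyhedral subdivision, they agree on all of $\mathcal{S}_{\ii}^{\R}$. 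This density/continuity argument is the only step that requires a sentence of justification; everything else is a formal manipulation. There is no serious obstacle here — the theorem is a direct corollary, and the proof in the excerpt already signals this by saying the Dual Crossing Formula "immediately implies" it.
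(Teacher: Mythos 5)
Your proposal is correct and follows exactly the route the paper takes: the paper offers no separate argument, stating only that the Dual Crossing Formula (Theorem \ref{stringinfty}) ``immediately implies'' the result, which is precisely your two formal steps of substituting $\varepsilon_a^*(x)=\max_{\gamma\in\Gamma_a}\langle x,r(\gamma)\rangle$ into the definition of $\mathcal{S}_{\ii}(\lambda)^{\mathbb{R}}$ and unfolding the maximum. Your added remark on extending the identity from the integer points $\mathcal{S}_{\ii}$ to the real cone $\mathcal{S}_{\ii}^{\mathbb{R}}$ via piecewise linearity and density is a point the paper silently glosses over, and it is handled correctly.
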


Using the explicit description of defining inequalities of $\mathcal{S}_{\ii}^{\mathbb{R}}$ obtained in \cite{GP} we obtain defining inequalities of $\mathcal{S}_{\ii}(\lambda)^{\mathbb{R}}\subset \mathbb{R}^N$. We recall the result of \cite{GP} for the convenience of the reader. 

Using the notation of Section \ref{wiringandrein} let $\mathcal{D}_{\ii}$ be the wiring diagram associated to $\ii \in \W$. For $a\in [n-1]$ let $\mathcal{D}_{\ii}(a)^{\vee}$ be the graph obtained from $\mathcal{D}_{\ii}(a)$ by reversing all arrows. For $a\in [n-1]$ an \emph{$a$-rigorous path} is an oriented path $\gamma=(v_1,\ldots,v_k)$ in $\mathcal{D}_{\ii}(a)^{\vee}$ which starts with the rightmost vertex of the wire $a$ and ends with the rightmost vertex of the wire $a+1$. Additionally $\gamma$ satisfies the following condition: Whenever $v_j, v_{j+1}, v_{j+2}$ lie on the same wire $p$ in $\mathcal{D}_{\ii}$ and the vertex $v_{j+1}$ lies on the intersection the wires $p$ and $q$, we have
\begin{align*} p > q & \quad \text{if }q\le a \\ 
p<q & \quad \text{if }a+1 \le q. \end{align*}
We denote the set of all $a$-rigorous paths by $\riga$.

For $\gamma\in \riga$ we define the set of turning points and the vector $r({\gamma})$ as in Definitions \ref{turn} and \ref{def:vectors}, respectively.

As a direct consequence of \cite[Corollary 5.8]{GP} and Theorem \ref{inequpol} we obtain
\begin{cor}\label{inequ2} The Nakashima-Zelevinsky string polytope $\mathcal{S}_{\ii}(\lambda)^{\mathbb{R}}$ is explicitly described by
$$\mathcal{S}_{\ii}(\lambda)^{\mathbb{R}} =\left\{ x\in \mathbb{R}^N \,\middle\mid\, \left\langle x, r({\gamma}) \right\rangle \ge 0, \ \left\langle x, r({\gamma'}) \right\rangle  \le \lambda_a \ \forall a\in [n-1], \, \gamma \in \riga,\,  \gamma'\in \Gamma_a\right\}.$$
\end{cor}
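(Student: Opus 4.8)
The plan is to deduce Corollary \ref{inequ2} by simply combining two facts: the description of the Nakashima-Zelevinsky polytope as a subset of the string cone $\mathcal{S}_{\ii}^{\mathbb{R}}$ cut out by the inequalities $\langle x, r(\gamma)\rangle \le \lambda_a$ (Theorem \ref{inequpol}), and the description of the string cone $\mathcal{S}_{\ii}^{\mathbb{R}}$ itself by the rigorous-path inequalities $\langle x, r(\gamma)\rangle \ge 0$ (cited from \cite[Corollary 5.8]{GP}). Concretely, one writes
\begin{align*}
\mathcal{S}_{\ii}(\lambda)^{\mathbb{R}}
&=\left\{x\in \mathcal{S}_{\ii}^{\mathbb{R}} \,\middle|\, \langle x, r(\gamma')\rangle \le \lambda_a \ \forall a\in[n-1],\, \gamma'\in\Gamma_a\right\}\\
&=\left\{x\in \mathbb{R}^N \,\middle|\, \langle x, r(\gamma)\rangle \ge 0 \ \forall a\in[n-1],\, \gamma\in\Gamma^*_a\right\}\\
&\quad\cap\left\{x\in\mathbb{R}^N \,\middle|\, \langle x, r(\gamma')\rangle \le \lambda_a\ \forall a\in[n-1],\,\gamma'\in\Gamma_a\right\},
\end{align*}
which is exactly the asserted description. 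The first equality is Theorem \ref{inequpol}; the second uses \cite[Corollary 5.8]{GP} (in the notation of this paper, after identifying $\riga$ with the $a$-rigorous paths and recalling that $r(\gamma)$ for $\gamma\in\riga$ is defined by the same recipe as in Definition \ref{def:vectors}).

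The only genuine content is to make sure the citation of \cite[Corollary 5.8]{GP} is being applied correctly, i.e. that the defining inequalities of $\mathcal{S}_{\ii}^{\mathbb{R}}$ produced there really are the vectors $r(\gamma)$, $\gamma\in\riga$, under our conventions. So the first step I would take is to recall precisely the statement of \cite[Corollary 5.8]{GP}: it expresses $\mathcal{S}_{\ii}^{\mathbb{R}}$ as the set of $x$ with $\langle x, r(\gamma)\rangle\ge 0$ for all $a$-rigorous paths $\gamma$, where the $a$-rigorous paths and their turning-point vectors are exactly the objects recalled in the paragraph preceding the corollary in this excerpt. (The present paper has taken care to phrase the definition of $\riga$ and of $r$ on $\riga$ so as to match \cite{GP} literally, so this is bookkeeping rather than mathematics.) The second step is to substitute this description of $\mathcal{S}_{\ii}^{\mathbb{R}}$ into the statement of Theorem \ref{inequpol} and intersect the two collections of half-spaces, which yields the displayed formula.

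I do not expect any serious obstacle here: the corollary is a two-line concatenation of Theorem \ref{inequpol} with an external result, and indeed the paper itself flags it as "a direct consequence". The one point requiring care — and the place where a careless proof could go wrong — is the matching of orientation/sign conventions between the $a$-crossings $\Gamma_a$ (paths in $\mathcal{D}_{\ii}(a)$ from the leftmost vertices of wires $a$, $a+1$) used in Theorem \ref{inequpol} and the $a$-rigorous paths $\riga$ (paths in $\mathcal{D}_{\ii}(a)^\vee$ from the rightmost vertices) used in \cite{GP}; one must check that the vector $r$ in Definition \ref{def:vectors} applied to $\riga$ gives precisely the inequality vectors of \cite[Corollary 5.8]{GP} with the correct inequality direction ($\ge 0$ rather than $\le 0$). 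Once that is confirmed, the corollary follows immediately.
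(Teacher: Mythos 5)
Your proposal is correct and follows exactly the paper's own route: the paper states the corollary as ``a direct consequence of \cite[Corollary 5.8]{GP} and Theorem \ref{inequpol}'', i.e.\ precisely the substitution of the rigorous-path description of $\mathcal{S}_{\ii}^{\mathbb{R}}$ into the statement of Theorem \ref{inequpol} that you carry out. Your extra remark about checking that the conventions for $\riga$ and $r$ match those of \cite{GP} is sensible bookkeeping, and is indeed the reason the paper recalls those definitions verbatim just before the corollary.
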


For the sake of completeness we recall the crystal structure on $\mathcal{S}_{\ii}(\lambda)$. For $k\in[N]$ we consider the function $\eta_k$ on $\mathcal{S}_{\ii}(\lambda)$ defined in \eqref{strstat1}.
Analogously to Lemma \ref{staronpoln} we have
\begin{lem}[\cite{N99}]\label{NZcrys}
	The following defines a crystal structure on $\mathcal{S}_{\ii}(\lambda)$ isomorphic to $B(\lambda)$.
	For $x\in \mathcal{S}_{\ii}(\lambda)$ and $a\in[n-1]$
\begin{align*}\varepsilon_a(x)&=\max\left\{\eta_k(x) \mid k\in[N],\, i_k=a \right\},\qquad 
\wt(x)=\lambda- \sum_{k\in [N]} x_k \alpha_{i_k},\\
{f}_a (x) &=\begin{cases} x+\left(\delta_{k,\ell_{x}}\right)_{k\in[N]}& \text{if }\varphi_a(x)>0,\\
0, & \text{else,} \end{cases} \\
{e}_a (x) & =\begin{cases} x-\left(\delta_{k, \ell^{x}}\right)_{k\in[N]}& \text{if }\varepsilon_a(x)>0 \\
0, & \text{else,} \end{cases}
\end{align*}
where $\ell^{x}\in[N]$ is minimal with $i_{\ell^x}=a$ and $\eta_{\ell^x}(x)=\varepsilon_a(x)$ and where
$\ell_{x}\in[N]$ is maximal with $i_{\ell_x}=a$ and $\eta_{\ell_x}(x)=\varepsilon_a(x)$.
\end{lem}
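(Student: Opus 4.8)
The plan is to mirror the (tensor-product) proof of Lemma \ref{staronpoln}, interchanging throughout the roles of $B(\infty)$ and $B(\infty)^*$, and then to substitute the explicit description of the \emph{untwisted} crystal structure on $\mathcal{S}_\ii$. First I would record the analogue of the identification \eqref{strstr}. Since $\mathcal{S}_\ii$ is identified with $B(\infty)$ through $b\mapsto \str^*_\ii(b)$, and $\varepsilon^*_a(x)$ denotes the $*$-value of the corresponding element, the defining constraints $\varepsilon^*_a(x)\le\lambda_a$ cut out exactly $\str^*_\ii(\widetilde{B}(\lambda))$. By \eqref{blambdainbinfty} the set $\widetilde{B}(\lambda)=\{b\in B(\infty)\mid \varepsilon^*_a(b)\le\lambda_a\ \forall a\}$ is a subcrystal of $B(\infty)\otimes R_\lambda$ isomorphic to $B(\lambda)$, so $\str^*_\ii$ furnishes a bijection $\mathcal{S}_\ii(\lambda)\cong B(\lambda)$ with weight shifted by $\lambda$.

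Next I would transport the crystal structure of $\widetilde{B}(\lambda)$ to $\mathcal{S}_\ii(\lambda)$. Writing $\iota_\lambda:\mathcal{S}_\ii(\lambda)\hookrightarrow \mathcal{S}_\ii$ for the inclusion, the tensor product rules of Definition \ref{crystens} applied to $b\otimes r_\lambda$ with $R_\lambda$ on the right show, exactly as for Lemma \ref{staronpoln} but now with the untwisted operators, that $\varepsilon_a(x)=\varepsilon_a(\iota_\lambda(x))$, that $\wt(x)=\lambda+\wt(\iota_\lambda(x))$, that $\iota_\lambda e_a(x)=e_a\iota_\lambda(x)$, and that $\iota_\lambda f_a(x)=f_a\iota_\lambda(x)$ as long as the result remains in $\widetilde{B}(\lambda)$. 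Since $B(\lambda)$ is a genuine crystal, $f_a$ returns $0$ exactly at the bottom of each $a$-string, that is when $\varphi_a(x)=0$; by axiom (C1) one has $\varphi_a(x)=\varepsilon_a(x)+\wt(x)(h_a)$, which is the boundary condition $\varphi_a(x)>0$ appearing in the statement, and likewise $e_a(x)=0$ exactly when $\varepsilon_a(x)=0$.

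Finally I would substitute the explicit description of the untwisted crystal structure on $\mathcal{S}_\ii$ from Lemma \ref{strBiso}. This immediately yields $\varepsilon_a(x)=\max\{\eta_k(x)\mid k\in[N],\ i_k=a\}$ and $\wt(x)=\lambda-\sum_{k} x_k\alpha_{i_k}$, and it turns the abstract operators $e_a,f_a$ into the coordinate shifts $x\mapsto x\pm(\delta_{k,\bullet})_{k\in[N]}$ at the extremal indices $\ell^x,\ell_x$ among the maximisers of $\eta_k(x)$, now equipped with the boundary conditions derived in the previous step. Combining these gives precisely the formulas of Lemma \ref{NZcrys}, recovering Nakashima \cite{N99}.

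I expect the main obstacle to be this last identification. Turning the abstract operators into shifts at the specific indices $\ell^x$ and $\ell_x$ requires running the tensor-product signature rule for $b_\infty\otimes b_{i_N}(-x_N)\otimes\cdots\otimes b_{i_1}(-x_1)$, tracking the reversed order of the factors so that the active factor is matched with the first, respectively last, maximiser of $\eta_k$. This is exactly the bookkeeping underlying Lemma \ref{strBiso}, and it must be invoked with care rather than merely quoted, since the correct matching of the shift position to the extremal maximiser, together with the weight and sign conventions, is the only genuinely delicate point in the argument.
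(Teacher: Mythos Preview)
The paper does not prove Lemma~\ref{NZcrys}; it is simply recalled ``for the sake of completeness'' with a citation to \cite{N99}. Your approach---transporting the crystal structure from $\widetilde{B}(\lambda)\subset B(\infty)\otimes R_\lambda$ through $\str^*_\ii$ and then invoking the explicit description of Lemma~\ref{strBiso}---is the natural way to supply a proof and is correct in outline.

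One concrete point to watch: as printed, Lemma~\ref{strBiso} has $f_a$ acting at the \emph{minimal} maximiser $\ell^{x}$ and $e_a$ at the \emph{maximal} maximiser $\ell_{x}$, whereas Lemma~\ref{NZcrys} has these roles exchanged. Since both lemmas are meant to describe the same $B(\infty)$-structure (via $\str^*_\ii$), restricted in the second case to the subset $\mathcal{S}_\ii(\lambda)$ with an added $\varphi_a$-cutoff, the two must agree up to that cutoff; the swap is a typographical inconsistency in the paper (compounded by the reversed-order identification $(x_1,\dots,x_N)\mapsto b_\infty\otimes b_{i_N}(-x_N)\otimes\cdots\otimes b_{i_1}(-x_1)$ stated just before Lemma~\ref{strBiso}, which does not match the preceding formula for $\Lambda_\ii(b)$). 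Your argument, carried through carefully using the tensor-product signature rule, will produce one consistent set of formulas; you should record that computation explicitly rather than asserting that the two printed versions coincide verbatim, since they do not.
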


\section{The Crossing Formula on Lusztig data}\label{sec:Lus}
The main ingredient in the proof of Theorem \ref{crossing} is the Crossing Formula proved in \cite{GKS1}, which we recall in this section.

\subsection{Lusztig's parametrization of the canonical basis}\label{sec:crystal}
Lusztig \cite{Lu} associated to a reduced word $\ii=(i_1,i_2,\ldots,i_N)\in \W$ a PBW-type basis $B_{\ii}$ of $U_q^-$ as follows.
Let $\beta_{1}< \beta_2 <\ldots<\beta_N$ be the total ordering of $\Phi^+$ corresponding to $\ii$ via Remark \ref{convexorder}. We set 
$$F_{\ii, \beta_m}:=T_{i_1}T_{i_2}\cdots T_{i_{m-1}}F_{i_m},$$
where $T_i$ acts via the braid group action defined in \cite[Section 1.3]{Lu2}. The divided powers $x^{(m)}$ for $x\in U_q^{-}$ are defined in \eqref{dividedpower}. Then the PBW-type basis
$$\mathbf{B}_{\ii}:=\left\{F_{\ii,\beta_1}^{(x_{1})} F_{\ii,\beta_2}^{(x_{2})} \cdots F_{\ii,\beta_N}^{(x_{N})} \,\middle\mid\, (x_{1},x_{2},\ldots, x_{N})\in \mathbb{N}^N\right\}$$
is in natural bijection with the canonical basis $\mathbf{B}$ of $U_q^-$ (see \cite[Proposition 2.3, Theorem 3.2]{Lu}).

\begin{defi}We call $x=(x_{1},x_{2},\ldots, x_{N})\in \mathbb{N}^N$, the \emph{$\ii$-Lusztig datum} of the element $F_{\ii,\beta_1}^{(x_{1})} F_{\ii,\beta_2}^{(x_{2})} \cdots F_{\ii,\beta_N}^{(x_{N})}\in \mathbf{B}_{\ii}$.
\end{defi}

\subsection{Crystal structures on Lusztig's parametrizations}
Let $\ii$ and $\jj$ be two reduced words for $w_0$. A piecewise linear bijection $\RR_{\jj}^{\ii}:\mathbb{N}^{N}\rightarrow \mathbb{N}^{N}$ from the set of $\ii$-Lusztig data to the set of $\jj$-Lusztig data is defined in \cite[Section 2.1]{Lu} using the fact that any reduced word $\jj$ can be obtained from any other reduced word $\ii$ by applying a sequence of $2$- and $3$-moves given in Definition \ref{moves}.

Let $\ii\in\W$ with corresponding total ordering $\beta_{1}< \beta_2 <\ldots<\beta_N$ of $\Phi^+$ as in Remark \ref{convexorder}. The crystal structure on $\ii$-Lusztig data $\mathbb{N}^N$ obtained from $B(\infty)$ via the bijection 
\begin{equation}
\label{lusBiso}
(x_1, \dots, x_N) \mapsto b_{\ii}(x):=F_{\ii,\beta_1}^{(x_{1})} F_{\ii,\beta_2}^{(x_{2})} \cdots F_{\ii,\beta_N}^{(x_{N})}\in\mathbf{B}_{\ii}\simeq \mathbf{B}
\end{equation}
 is given as follows (see \cite{L93}, also \cite[Proposition 3.6]{BZ2}).
\begin{prop}\label{def:crystal} 
	Let $a\in[n-1]$ and $\jj\in\W$ with $j_1=a$. For an $\ii$-Lusztig datum $x\in \mathbb{N}^N$ and $y:=\Phi^{\ii}_{\jj}(x)$
	\begin{align*}
	\varepsilon_{a}(x)&=y_1,\qquad \wt(x)=-\sum_{k\in [N]}x_k \beta_k,\\
	f_{a}(x) &= \Phi^{\jj}_{\ii}\left(y+(1,0,0,\dots)\right),\\
	e_{a}(x) &= \begin{cases}
	\Phi^{\jj}_{\ii}\left(y-(1,0,0,\dots) \right), & \text{if $\varepsilon_a(x)>0,$}\\0,&\text{else.}
	\end{cases}
	\end{align*}
\end{prop}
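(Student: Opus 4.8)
The plan is to reduce the general case to the single special situation where the reduced word begins with $a$, in which case the crystal operators act in the most transparent way on the first PBW coordinate, and then transport this back along $\Phi^{\ii}_{\jj}$. First I would recall from \cite{Lu} (or \cite{L93}) that the PBW generator $F_{\jj,\beta_1}=F_a$ when $j_1=a$, so that for a $\jj$-Lusztig datum $y=(y_1,y_2,\dots,y_N)$ the basis element factors as $b_{\jj}(y)=F_a^{(y_1)}\,b'$ where $b'=F_{\jj,\beta_2}^{(y_2)}\cdots F_{\jj,\beta_N}^{(y_N)}$ lies in (the span of) $\ker e'_a$; this is the standard fact that the tail generators $F_{\jj,\beta_2},\dots,F_{\jj,\beta_N}$ are obtained by applying $T_a$ to PBW generators of a smaller rank and hence are annihilated by $e'_a$. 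Combined with the Kashiwara decomposition $U_q^-=\bigoplus_{m\ge 0}F_a^{(m)}\ker e'_a$ from Section~\ref{sec:crysrep}, this identifies the $m$-string through $b_{\jj}(y)$ under ${e}_a,{f}_a$ with varying $y_1$, giving directly ${f}_a b_{\jj}(y)=b_{\jj}(y+(1,0,\dots,0))$, ${e}_a b_{\jj}(y)=b_{\jj}(y-(1,0,\dots,0))$ for $y_1>0$ and $0$ else, and $\varepsilon_a(b_{\jj}(y))=y_1$ from the definition $\varepsilon_a(b)=\max\{k\mid {e}_a^k b\ne 0\}$.

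Next I would invoke the compatibility of the transition maps $\Phi^{\ii}_{\jj}$ with the crystal isomorphism: by construction in \cite[Section~2.1]{Lu} the bijection \eqref{lusBiso} intertwines $\Phi^{\ii}_{\jj}$ with the identity on $\mathbf{B}$, i.e.\ $b_{\jj}(\Phi^{\ii}_{\jj}(x))=b_{\ii}(x)$ as elements of the canonical basis $\mathbf{B}$. Since the Kashiwara operators ${e}_a,{f}_a$ are intrinsic to $B(\infty)$ and do not depend on the choice of reduced word, applying the special-word computation to $y=\Phi^{\ii}_{\jj}(x)$ and then translating back via $b_{\ii}(x)=b_{\jj}(y)$ yields ${f}_a(x)=\Phi^{\jj}_{\ii}(y+(1,0,\dots))$ and the analogous formula for ${e}_a$ and for $\varepsilon_a(x)=y_1$. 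The weight formula $\wt(x)=-\sum_k x_k\beta_k$ is immediate since $F_{\ii,\beta_k}$ has weight $-\beta_k$ and $\wt$ is transported directly through \eqref{lusBiso}.

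The step I expect to be the main obstacle is the claim that the tail PBW generators $F_{\jj,\beta_2},\dots,F_{\jj,\beta_N}$ lie in $\ker e'_a$ when $j_1=a$; this is the technical heart and requires the known description of $T_a$ acting on the subalgebra generated by $\{F_b\}_{b\ne a}$ together with the identity $e'_a T_a = $ (an explicit operator) from \cite{Lu2} or \cite{Ka91}, or alternatively Lusztig's result that $T_a$ sends PBW generators for the smaller parabolic to elements annihilated by $e'_a$. Once this is in place, everything else is bookkeeping with the decomposition $U_q^-=\bigoplus_m F_a^{(m)}\ker e'_a$ and the word-independence of the Kashiwara operators; no genuinely new computation is needed, since this proposition is a recollection of results of Lusztig \cite{Lu} and \cite{L93} (see also \cite[Proposition~3.6]{BZ2}), and the proof can legitimately be short or cite these sources.
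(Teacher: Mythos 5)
The paper offers no proof of Proposition \ref{def:crystal} at all: it is stated as a recollection with pointers to \cite{L93} and \cite[Proposition 3.6]{BZ2}. Your proposal supplies the standard argument that underlies those citations, and it is essentially correct: for $j_1=a$ one has $F_{\jj,\beta_1}=F_a$, while for $k\ge 2$ each $F_{\jj,\beta_k}=T_a\bigl(T_{j_2}\cdots T_{j_{k-1}}F_{j_k}\bigr)$ lies in $U_q^-\cap T_a(U_q^-)$, and Lusztig's theorem identifying $U_q^-\cap T_a(U_q^-)$ with $\ker e'_a$ (together with the fact that $\ker e'_a$ is closed under multiplication, $e'_a$ being a $q$-derivation) puts the tail monomial in $\ker e'_a$; the Kashiwara decomposition $U_q^-=\bigoplus_m F_a^{(m)}\ker e'_a$ then gives $\varepsilon_a=y_1$ and the shift of the first coordinate on the nose. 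Two points deserve to be made explicit if you write this up: first, $b_{\jj}(\Phi^{\ii}_{\jj}(x))$ and $b_{\ii}(x)$ are not equal in $U_q^-$ but only congruent modulo $q\mathcal{L}(\infty)$ (both reduce to the same canonical basis element), which suffices because the crystal operators live on $\mathcal{L}(\infty)/q\mathcal{L}(\infty)$ and the PBW monomials span $\mathcal{L}(\infty)$ over $A$; second, the weight formula needs $\wt$ on $B(\infty)$ to be the weight of the corresponding element of $U_q^-$, as fixed in Section \ref{sec:crysrep}, after which $\wt(F_{\ii,\beta_k})=-\beta_k$ gives the claim. With those caveats your route is exactly the one a self-contained proof would take; citing \cite{L93} for the statement $U_q^-\cap T_a(U_q^-)=\ker e'_a$ is legitimate and keeps the argument short.
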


The main result of \cite{GKS1} is the Crossing Formula for the crystal structure from Proposition \ref{def:crystal}. Using \eqref{blambdainbinfty} this leads for $\lambda \in P^+$ to a formula for the crystal structure on $\mathcal{L}_{\ii}(\lambda):=\{x\in \mathbb{N}^N \mid \varepsilon^*_a(x)\le \lambda_a \ \forall a \in [n-1]\}$ isomorphic to $B(\lambda)$: 
\begin{thm}[{\cite[Theorem 2.13, Proposition 2.20]{GKS1}}]\label{crossL}   For $\lambda\in P^+$, $x\in \mathcal{L}_{\ii}(\lambda)$ and $a\in[n-1]$ we have 
	\begin{align*} 
	\varepsilon_a(x)&=\max\left\{\left\langle x,s({\gamma})\right\rangle \,\middle\mid\, \gamma\in \Gamma_a\right\},\qquad 	\wt(x)=\lambda  - \displaystyle\sum_{k\in [N]}x_k \beta_k,\\ 
	{f}_a (x)& = \begin{cases} x+r({\gamma_{x}}), & \text{if } \varphi_a(x)>0,\\
	0, & \text{else,} \end{cases} \\ 
	{e}_a (x) &= \begin{cases} x+r({\gamma^{x}}), & \text{if }\varepsilon_a(x)>0,\\
	0, & \text{else,} \end{cases}
	\end{align*}
	where $\gamma^{x}\in \Gamma_a$ is minimal with $\langle x,s(\gamma^x)\rangle=\varepsilon_a(x)$ and $\gamma_{x}\in \Gamma_a$ is maximal with $\langle x,s(\gamma_x)\rangle=\varepsilon_a(x)$.
\end{thm}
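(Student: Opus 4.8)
The plan is to fix the direction $a\in[n-1]$ throughout and to reduce the whole statement, via Proposition~\ref{def:crystal}, to an analysis of the single piecewise-linear function $\varepsilon_a$ together with the increments obtained by transporting $\pm(1,0,\dots,0)$ back through the transition map. Concretely, choosing $\jj\in\W$ with $j_1=a$ and writing $y=\Phi^{\ii}_{\jj}(x)$, Proposition~\ref{def:crystal} gives $\varepsilon_a(x)=y_1$, $f_a(x)=\Phi^{\jj}_{\ii}(y+(1,0,\dots,0))$ and $e_a(x)=\Phi^{\jj}_{\ii}(y-(1,0,\dots,0))$. The weight formula is the weight of the PBW monomial $b_{\ii}(x)$, namely $-\sum_k x_k\beta_k$ shifted by $\lambda$ through \eqref{blambdainbinfty}, and needs no separate argument. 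Thus everything rests on (i) the identity $\varepsilon_a(x)=\max_{\gamma\in\Gamma_a}\langle x,s(\gamma)\rangle$ and (ii) the identification of the resulting local increments with the vectors $r(\gamma)$ attached to the extremal crossings.

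For (i) I would argue by induction on the number of $2$- and $3$-moves needed to transform $\ii$ into a word beginning with $a$; such a word always exists and the reduced-word graph is connected, so the induction is well founded. In the base case $i_1=a$ one has $\varepsilon_a(x)=x_1$, and a direct inspection of the oriented wiring diagram $\mathcal{D}_{\ii}(a)$ identifies a distinguished crossing $\gamma_0\in\Gamma_a$ through the initial vertex $v_{a,a+1}$ with $s(\gamma_0)=(1,0,\dots,0)$, so that $\max_{\gamma\in\Gamma_a}\langle x,s(\gamma)\rangle=x_1$. Since the transition maps $\Phi$ are tropicalizations of Lusztig's subtraction-free PBW transitions, the function $x\mapsto(\Phi^{\ii}_{\jj}x)_1$ is automatically a $\max$ of finitely many $\Z$-linear forms; the substance of the induction is to pin these forms down to be exactly the $s(\gamma)$, $\gamma\in\Gamma_a$.

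The inductive step is a local analysis of how $\Gamma_a$, together with the maps $r$, $s$ and the order $\preceq$, behaves under a single move relating $\ii$ to $\jj$, matched against the corresponding coordinate change $y=\Phi^{\ii}_{\jj}(x)$. A $2$-move only transposes two coordinates corresponding to commuting, non-adjacent crossings and changes nothing essential, so $\max_{\gamma}\langle x,s(\gamma)\rangle$ and the extremal crossings are carried across verbatim. The delicate case is the $3$-move: here the braid triangle forces a genuine tropical identity of the form $\max(\cdot,\cdot)$, and the family of $a$-crossings threading the triangle is reshuffled. I would establish a bijection of the relevant crossings under which $\langle x,s(\gamma)\rangle=\langle y,s(\gamma')\rangle$, show that this bijection is order-preserving for $\preceq$, and that it preserves both the value of the maximum and its $\preceq$-extremal maximizers. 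Connectivity of the move graph then propagates the base case to all of $\W$.

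Finally, for (ii) I would fix a chamber of linearity of $\Phi^{\ii}_{\jj}$, on which the map is an integral matrix $M$ with $(\Phi^{\ii}_{\jj}x)_1=\langle x,s(\gamma)\rangle$ for the crossing $\gamma$ indexing that chamber. Then $f_a(x)=x+M^{-1}(1,0,\dots,0)$, and I would compute $M^{-1}(1,0,\dots,0)$ directly: it is the unique increment raising $y_1$ by one while fixing $y_2,\dots,y_N$, and tracing it back along $\gamma$ shows that it is supported on the turning points $T_{\gamma}$ with signs $\sgn(q-p)$, i.e.\ exactly $r(\gamma)$ of Definition~\ref{def:vectors}. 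When $x$ lies on a wall several crossings attain the maximum; since $f_a$ increases $y_1$ one must use the chamber on the increasing side, which corresponds to the $\preceq$-maximal maximizer $\gamma_x$, whereas $e_a$ decreases $y_1$ and selects the $\preceq$-minimal $\gamma^x$. This is precisely the asymmetric selection recorded in the statement, and it also accounts, through the orientation convention in the labelling $v_{p,q}$, for the opposite signs carried by the two increments. The main obstacle is the $3$-move step of (i) together with the compatibility of the combinatorial order $\preceq$ with the geometry of the linearity chambers; once this local braid analysis is carried out, the global theorem follows by connectivity and the move-invariance of the intrinsic crystal structure.
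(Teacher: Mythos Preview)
The paper does not prove Theorem~\ref{crossL} at all: it is quoted verbatim from \cite[Theorem~2.13, Proposition~2.20]{GKS1} and used as a black box, being, in the paper's own words at the start of Section~\ref{sec:Lus}, ``the main ingredient in the proof of Theorem~\ref{crossing}''. There is therefore no proof in this paper to compare your proposal against.

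As to your sketch itself, the strategy---reduce to $j_1=a$ via Proposition~\ref{def:crystal} and induct along $2$- and $3$-moves, tracking how $\Gamma_a$, $s$, $r$ and the order $\preceq$ transform---is the natural one, and is in the spirit of how such results are established. Two remarks. First, your base case is incomplete: exhibiting the one-vertex crossing $\gamma_0$ with $s(\gamma_0)=(1,0,\dots,0)$ only gives $\max_{\gamma}\langle x,s(\gamma)\rangle\ge x_1$; you still owe the inequality $\langle x,s(\gamma)\rangle\le x_1$ for every other $\gamma\in\Gamma_a$ (there are in general several $a$-crossings even when $i_1=a$, and $s(\gamma)$ has entries of both signs, so nonnegativity of $x$ alone does not suffice). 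Second, you correctly identify the $3$-move step and the compatibility of $\preceq$ with the chambers of linearity of $\Phi^{\ii}_{\jj}$ as the crux; in your write-up this is asserted rather than carried out, so as it stands the proposal is a plan rather than a proof.
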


\section{Proof of Theorem \ref{crossing}}\label{mainproof}
We fix $\ii=(i_1,\ldots,i_N) \in \W$ as well as $\lambda=\sum_{b\in [n]}\lambda_b\omega_b\in P^+$ and set
\begin{align*}
\lambda^*&:=\sum_{b \in [n]} \lambda_{n-b}\omega_b\in P^+,\\
\underline{\lambda}&:=(\lambda_{i_1},\lambda_{i_2},\ldots \lambda_{i_N})\in \mathbb{N}^N.
\end{align*}
\subsection{A bijection between string and Lusztig data}
Let  $(c_{i, j})$ be the Cartan matrix of $\sln$. For 
$x\in\Z^N$ we define
\begin{align*}
F_{\ii} (x)&:=\left(x_k+\displaystyle\sum_{k<\ell\leq N} c_{i_k,i_{\ell}}x_{\ell}\right)_{k\in [N]}\in\Z^N,\\
G_{\ii}^{\lambda}(x)&:=\underline{\lambda}-F_{\ii}(x)\in\Z^N.
\end{align*}

By \cite[MG03, Corollaire 3.5]{MG}, \cite[Lemma 6.3]{CMMG} 
(see also \cite[Lemma 6.4, Lemma 7.4, Proposition 8.2]{GKS2}) we have 
\begin{prop}\label{prop:SL} The map $G_{\ii}^{\lambda}$ restricts to a bijection
\begin{equation*}
G_{\ii}^{\lambda}:\mathcal{S}^*_{\ii}(\lambda) 
\xrightarrow{\sim} \mathcal{L}_{\ii}(\lambda^*).
\end{equation*}
Further, $G_{\ii}^{\lambda} \circ \Psi^{\ii}_{\jj}=\Phi^{\ii}_{\jj}\circ G_{\ii}^{\lambda}$ for any $\jj \in \W$.
\end{prop}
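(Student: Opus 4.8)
The map $F_{\ii}$ is given by a unipotent upper-triangular integer matrix (ones on the diagonal, the strictly-upper entries being the Cartan numbers read to the right), so $F_{\ii}\colon\Z^N\to\Z^N$ is invertible over $\Z$ and $G_{\ii}^{\lambda}=\underline{\lambda}-F_{\ii}$ is an affine bijection of $\Z^N$; in particular its restriction to any subset is injective. The plan is to prove the bijection by establishing a single inclusion together with a counting argument, rather than writing down the inverse by hand. First I would record that both sides are finite of the same cardinality: $\mathcal{S}^*_{\ii}(\lambda)$ is in bijection with $B(\lambda)$ by the discussion preceding Lemma \ref{staronpoln}, while $\mathcal{L}_{\ii}(\lambda^*)=\{x\in\N^N\mid\varepsilon^*_a(x)\le\lambda^*_a\}$ is in bijection with $B(\lambda^*)$ by \eqref{blambdainbinfty} applied to the crystal identification $\N^N\cong B(\infty)$ of \eqref{lusBiso}. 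Since $V(\lambda^*)\cong V(\lambda)^*$ we have $\dim V(\lambda^*)=\dim V(\lambda)$, hence $|\mathcal{S}^*_{\ii}(\lambda)|=|\mathcal{L}_{\ii}(\lambda^*)|<\infty$. It therefore suffices to verify the one inclusion $G_{\ii}^{\lambda}\bigl(\mathcal{S}^*_{\ii}(\lambda)\bigr)\subseteq\mathcal{L}_{\ii}(\lambda^*)$: injectivity together with equal finite cardinality then forces equality and bijectivity.

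For the inclusion, fix $x\in\mathcal{S}^*_{\ii}(\lambda)$ and set $y:=G_{\ii}^{\lambda}(x)$. The two defining conditions of the target should be matched to the two defining conditions of the source with their roles swapped. The positivity $y\in\N^N$ is immediate: by \eqref{litstring}, membership of $x$ in the \lsp string polytope means precisely $F_{\ii}(x)_k\le\lambda_{i_k}$ for all $k$, i.e.\ $y_k=\lambda_{i_k}-F_{\ii}(x)_k\ge 0$. Thus the $*$-upper-bound that cuts $\mathcal{S}^*_{\ii}(\lambda)$ out of the string cone becomes the positivity that cuts $\mathcal{L}_{\ii}(\lambda^*)$ out of $\Z^N$. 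It remains to prove the reverse matching, namely that the string-cone membership $x\in\mathcal{S}_{\ii}$ is equivalent, under $G_{\ii}^{\lambda}$, to the $*$-bound $\varepsilon^*_a(y)\le\lambda^*_a$ for all $a\in[n-1]$.

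This last equivalence is the crux, and I would prove it by comparing the two families of defining half-spaces through the affine map $G_{\ii}^{\lambda}$. On the string side the inequalities of $\mathcal{S}_{\ii}$ are $\langle x,r(\gamma)\rangle\ge 0$ for $\gamma\in\riga$, $a\in[n-1]$, by \cite[Corollary 5.8]{GP} (recalled in Corollary \ref{inequ2}). On the Lusztig side $\varepsilon^*_a$ is the piecewise-linear maximum of the linear functionals supplied by the $*$-Crossing Formula \cite[Theorem 2.20]{GKS1}. The task is to substitute $y=\underline{\lambda}-F_{\ii}(x)$ into each inequality $\varepsilon^*_a(y)\le\lambda^*_a$ and check that moving $F_{\ii}$ across the pairing converts the $*$-Crossing functionals into exactly the rigorous-path functionals $r(\gamma)$ defining $\mathcal{S}_{\ii}$, with the constant $\lambda^*_a=\lambda_{n-a}$ produced by the $\underline{\lambda}$-shift together with the Dynkin-diagram symmetry $\omega_b\mapsto\omega_{n-b}$ built into $\lambda^*$. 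This is precisely the duality between the string cone and the $*$-crystal structure on Lusztig data highlighted in the remark after Theorem \ref{stringinfty} (the polarity of $\mathcal{S}_{\ii}$ and $\mathbf{R}^*$), and I expect the accurate bookkeeping of how $F_{\ii}$ acts on the vectors $r(\gamma),s(\gamma)$ and of the correct emergence of $\lambda^*$ to be the main obstacle.

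Finally, for the intertwining relation I would argue by reduction to elementary braid moves. Both transition maps $\Psi^{\ii}_{\jj}$ and $\Phi^{\ii}_{\jj}$ are, by their definitions in Sections \ref{crysonstring} and \ref{sec:Lus}, compositions of the elementary maps attached to single $2$- and $3$-moves, so it suffices to verify the identity when $\jj$ is obtained from $\ii$ by one such move. For a $2$-move at position $k$ one has $c_{i_k,i_{k+1}}=0$, both $\Psi$ and $\Phi$ transpose the coordinates $k,k+1$, and compatibility with the affine $G$ is a one-line check. A $3$-move reduces to a local computation in three consecutive coordinates, where $\Psi$ is the tropical $A_2$ map (built from $\max$ and $\min$) displayed in Section \ref{crysonstring} and $\Phi$ is Lusztig's tropical $A_2$ map; one checks directly that the relevant $3\times 3$ affine block of $G$ conjugates the former into the latter, using $c_{i_k,i_{k+1}}=-1$. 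As a byproduct this intertwining yields an alternative route to the bijection: once the statement is known for a single conveniently chosen $\ii$, the commuting square propagates it to every reduced word, since $\Psi^{\ii}_{\jj}$ and $\Phi^{\ii}_{\jj}$ are themselves bijections of the corresponding polytopes.
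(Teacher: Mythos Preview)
The paper does not prove this proposition; it is quoted from the literature (\cite{MG}, \cite[Lemma~6.3]{CMMG}, with further details in \cite{GKS2}). So there is no in-paper argument to compare against, and I assess your proposal on its own terms. Your treatment of the intertwining relation is correct and is essentially how the cited references proceed: reduce to a single $2$- or $3$-move and verify the identity by a direct local computation (reading the statement, as one must, as $G_{\jj}^{\lambda}\circ\Psi^{\ii}_{\jj}=\Phi^{\ii}_{\jj}\circ G_{\ii}^{\lambda}$).

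For the bijection, the counting reduction and the easy half of the inclusion (positivity $y_k\ge 0$ from the $\lambda$-inequalities of \eqref{litstring}) are fine. The hard half, however, you have not established: you need the string-cone membership of $x$ to become the bound $\varepsilon^*_a(y)\le\lambda_{n-a}$ under $y=\underline{\lambda}-F_{\ii}(x)$, and you explicitly leave undone both the transport of the $*$-Crossing functionals through $F_{\ii}$ and the emergence of the Dynkin flip $a\mapsto n-a$. That is the actual content of the step, not bookkeeping: it requires analogues for \emph{rigorous} paths $\riga$ of Proposition~\ref{Fs} (relating $r$ and $s$ via $F_{\ii}$) and of Lemma~\ref{genius} (identifying the constant term $\langle\underline{\lambda},\,\cdot\,\rangle$), neither of which you supply. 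Given that you have already argued the intertwining independently, the route you mention at the very end is both cleaner and closer to what the cited sources do: check the bijection for one convenient word (e.g.\ $\ii_0=(1,2,1,3,2,1,\dots)$, where both polytopes are of Gelfand--Tsetlin type and the affine map $G_{\ii_0}^{\lambda}$ can be matched against them directly), and then propagate to all $\ii$ via the commuting square. This avoids the $*$-Crossing/polarity machinery entirely and closes the gap.
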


The bijection $G_{\ii}^{\lambda}$ between $\mathcal{S}^*_{\ii}(\lambda)$ and $\mathcal{L}_{\ii}(\lambda^*)$ intertwines the crystal structures given in Lemma \ref{staronpoln} and Proposition \ref{def:crystal} as follows.
\begin{lem}\label{crysint} For 
	$a\in[n-1]$ we have on $\mathcal{S}^*_{\ii}(\lambda)$
	\begin{align}\label{epscomp}
	\varepsilon_a &= \varphi_a \circ G_{\ii}^\lambda,\\\label{ecomp}
	G_{\ii}^{\lambda} \circ e_a &= f_a \circ G_{\ii}^{\lambda},\\\label{wtcomp}
	\wt&=-\wt\circ G_{\ii}^{\lambda}.
	\end{align}
\end{lem}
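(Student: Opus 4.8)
The three identities in Lemma \ref{crysint} all compare the crystal structure on $\mathcal{S}^*_{\ii}(\lambda)$ from Lemma \ref{staronpoln} (which is the $*$-crystal structure of $B(\infty)$, shifted and truncated by $\lambda$) with the crystal structure on $\mathcal{L}_{\ii}(\lambda^*)$ from Proposition \ref{def:crystal} (the ordinary $B(\infty)$-crystal structure on Lusztig data, truncated by $\lambda^*$), transported along $G_{\ii}^\lambda$. The plan is to reduce everything to the case of a reduced word $\jj$ starting with $a$, where both crystal structures are described by ``add/subtract a box in the first coordinate'', and then use the compatibility $G_{\ii}^\lambda\circ\Psi^{\ii}_{\jj}=\Phi^{\ii}_{\jj}\circ G_{\ii}^\lambda$ from Proposition \ref{prop:SL} to move back to $\ii$.

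First I would fix $a\in[n-1]$ and pick $\jj\in\W$ with $j_1=a$. Write $z:=\Psi^{\ii}_{\jj}(x)\in\mathcal{S}^*_{\jj}(\lambda)$ and, by Proposition \ref{prop:SL}, $G_{\jj}^\lambda(z)=\Phi^{\ii}_{\jj}(G_{\ii}^\lambda(x))=:w\in\mathcal{L}_{\jj}(\lambda^*)$. By Lemma \ref{eq:stringstarop} and Lemma \ref{staronpoln} the $*$-crystal structure on $\mathcal{S}^*_{\ii}(\lambda)$, which is what Lemma \ref{staronpoln} records, reads off from the first coordinate of $z$: $\varepsilon_a(x)=\varepsilon^*_a(\iota_\lambda x)=z_1$, and $e_a,f_a$ are given (up to the truncation condition) by $\Psi^{\jj}_{\ii}(z\mp(1,0,\dots))$. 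On the Lusztig side, Proposition \ref{def:crystal} says $\varepsilon_a$ on $\mathcal{L}_{\jj}(\lambda^*)$ is $w_1$, with $e_a,f_a$ given by $\Phi^{\jj}_{\ii}(w\mp(1,0,\dots))$. So the whole statement reduces to two facts about the first coordinate: (i) $w_1=\lambda^*_a-z_1$, i.e. the $a$-th entry of $G_{\jj}^\lambda$ recovers $\lambda^*_a$ minus the first string coordinate — which forces $\varepsilon_a(x)=\varphi_a(G_{\ii}^\lambda x)$ via (C1), since $\varphi_a(w)=\varepsilon_a(w)+\wt(w)(h_a)$ and $\wt$ matches by \eqref{wtcomp}; and (ii) adding a box to the first coordinate on the string side corresponds to subtracting a box on the Lusztig side, which is exactly the statement that $G_{\jj}^\lambda$ is affine-linear with $-1$ in the $(1,1)$ slot. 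Both (i) and (ii) are immediate from the explicit form $G_{\jj}^\lambda(z)=\underline{\lambda}-F_{\jj}(z)$ together with $j_1=a$: the first entry of $F_{\jj}(z)$ is $z_1+\sum_{1<\ell}c_{a,i_{j_\ell}}z_\ell$, but for the comparison one only needs that $\partial (F_{\jj}(z))_1/\partial z_1=1$ and that the constant term is $\underline{\lambda}_1=\lambda_a$, matched against $\lambda^*_a=\lambda_{n-a}$ after tracking the definition $\lambda^*=\sum\lambda_{n-b}\omega_b$ and the way $\mathcal{L}_{\jj}(\lambda^*)$ is cut out.

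Concretely, for \eqref{ecomp} I would compute: $G_{\ii}^\lambda(e_a x)=G_{\ii}^\lambda\Psi^{\jj}_{\ii}(z-(1,0,\dots))=\Phi^{\jj}_{\ii}G_{\jj}^\lambda(z-(1,0,\dots))=\Phi^{\jj}_{\ii}(w+(1,0,\dots))=f_a(\Phi^{\jj}_{\ii}w)=f_a(G_{\ii}^\lambda x)$, where the middle equality uses that $G_{\jj}^\lambda$ changes the first coordinate by $-1$ when its argument's first coordinate changes by $+1$, and the last two use Propositions \ref{prop:SL} and \ref{def:crystal}. One must also check the boundary cases — that $e_a x\ne 0$ on the string side ($\varepsilon_a(x)=z_1>0$) matches $f_a(G_{\ii}^\lambda x)\ne 0$ on the Lusztig side ($\varphi_a=\varepsilon_a+\wt(h_a)>0$), which again follows from (i). For \eqref{epscomp} I would just read off $\varepsilon_a(x)=z_1$ and $\varphi_a(G_{\ii}^\lambda x)=\varphi_a(w)=\lambda^*_a-w_1$, then verify $z_1=\lambda^*_a-w_1$ from $w_1=(G_{\jj}^\lambda z)_1=\lambda_a-(F_{\jj}z)_1$ — here the nontrivial input is that $(F_{\jj}z)_1$ together with the definition of $\lambda^*$ produces exactly $\lambda^*_a - z_1$; equivalently this is \eqref{wtcomp} plus (C1). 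For \eqref{wtcomp}, $\wt(x)=\lambda-\sum_k x_k\alpha_{i_k}$ and $\wt(G_{\ii}^\lambda x)=-\sum_k (G_{\ii}^\lambda x)_k\beta_k$; substituting $G_{\ii}^\lambda x=\underline\lambda-F_{\ii}(x)$, expanding $\sum_k(\underline\lambda)_k\beta_k$ and using that $\sum_k(F_{\ii}(x))_k\beta_k=\sum_k x_k\alpha_{i_k}$ (the standard identity relating the Cartan-matrix transform to the pairing between the root basis and the convex order, as in \cite{MG,GKS2}) gives the claim; this reduces to the identity $\lambda=\sum_k(\underline\lambda)_k\beta_k + (\text{something})$ that pins down $\lambda^*$ versus $\lambda$.

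\textbf{Main obstacle.} The genuinely load-bearing step is the linear-algebra identity underlying \eqref{wtcomp} and part (i): that $\sum_{k}(F_{\ii}(x))_k\,\beta_k=\sum_k x_k\,\alpha_{i_k}$ and, relatedly, that $\sum_k(\underline\lambda)_k\beta_k$ combines with the definition $\lambda^*=\sum_b\lambda_{n-b}\omega_b$ to give the correct weight shift. This is where the choice of $\lambda^*$ (rather than $\lambda$) is forced, and it is exactly the content imported from \cite{MG} and \cite{GKS2} (cited right before Proposition \ref{prop:SL}); everything else is a formal manipulation of the ``first-coordinate'' descriptions of the two crystal structures and the intertwining relation $G_{\ii}^\lambda\circ\Psi^{\ii}_{\jj}=\Phi^{\ii}_{\jj}\circ G_{\ii}^\lambda$. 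So the proof is short provided one is willing to quote that weight identity; the only care needed is bookkeeping of the truncation inequalities ($\varepsilon^*_a\le\lambda_a$ versus $\varepsilon^*_a\le\lambda^*_a$) to be sure $G_{\ii}^\lambda$ really lands in $\mathcal{L}_{\ii}(\lambda^*)$ and is compatible with the respective $\varphi_a>0$ conditions in \eqref{f}--\eqref{e}, but this is already guaranteed by Proposition \ref{prop:SL}.
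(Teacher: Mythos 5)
Your treatment of \eqref{epscomp} and \eqref{ecomp} is the same as the paper's: both reduce to a reduced word $\jj$ with $j_1=a$, where each crystal structure is ``add/subtract $1$ in the first coordinate,'' and then transport back via $G_{\ii}^{\lambda}\circ\Psi^{\ii}_{\jj}=\Phi^{\ii}_{\jj}\circ G_{\ii}^{\lambda}$ from Proposition \ref{prop:SL}; the paper compresses this to one sentence (``clearly \eqref{epscomp} and \eqref{ecomp} hold for $i_1=a$''). Where you genuinely diverge is \eqref{wtcomp}: you compute both weights explicitly, which requires the identities $\sum_k(F_{\ii}(x))_k\beta_k=\sum_k x_k\alpha_{i_k}$ and $\sum_k\lambda_{i_k}\beta_k=\lambda+\lambda^{*}$ (both true, but needing proof or a precise citation), whereas the paper avoids all of this: it observes that by \eqref{ecomp} and axiom (C3) both sides of \eqref{wtcomp} change by the same amount under the crystal operators, so it suffices to check the highest weight element $x_{\lambda}$, and then \eqref{epscomp} identifies $G_{\ii}^{\lambda}(x_{\lambda})$ as the lowest weight element of $\mathcal{L}_{\ii}(\lambda^{*})$, of weight $w_0\lambda^{*}=-\lambda$. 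The paper's route is shorter and explains conceptually why $\lambda^{*}$ appears; yours makes the linear algebra explicit but imports more. Two small inaccuracies to fix in your write-up: the intermediate claim $w_1=\lambda^{*}_a-z_1$ is false as stated (one has $w_1=\lambda_a-z_1-\sum_{\ell>1}c_{a,j_\ell}z_\ell$, and only the combination $\varphi_a(w)=w_1+\wt(w)(h_a)$ collapses to $z_1$ --- which is how you actually argue via (C1)); and the weight on $\mathcal{L}_{\ii}(\lambda^{*})$ is the shifted one $\lambda^{*}-\sum_k y_k\beta_k$ from Theorem \ref{crossL}, not $-\sum_k y_k\beta_k$ --- without that shift your computation for \eqref{wtcomp} is off by exactly $\lambda^{*}$.
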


\begin{proof} Clearly, \eqref{epscomp} and \eqref{ecomp} hold for $i_1=a$ and thus by Proposition \ref{prop:SL} for arbitrary $\ii\in\W$.
	
By \eqref{ecomp} and the crystal axiom (C3) in Definition \ref{abstrcryst} it is enough to show \eqref{wtcomp} for the highest weight element $x_\lambda$ of $\mathcal{S}^*_{\ii}(\lambda)$. By \eqref{epscomp} we have for $a'\in[n-1]$
$$
\varphi_{a'} \circ G_{\ii}^\lambda (x_{\lambda}) = 	\varepsilon_{a'} (x_\lambda ) = 0,
$$
i.e. $G_{\ii}^\lambda (x_{\lambda})$ is the lowest weight element of $\mathcal{L}_{\ii}(\lambda^*)$. Thus
$$
\wt(x_\lambda) = \lambda = -\wt \circ G_{\ii}^\lambda (x_{\lambda}).
$$
\end{proof}

\subsection{Reineke crossings and the bijection $G_{\ii}^{\lambda}$}
For $a\in [n-1]$ we attach in Definition \ref{def:vectors} to $\gamma\in \Gamma_a$ the vectors $s({\gamma})$, $r({\gamma})\in \mathbb{Z}^N$. In \cite[Theorem 3.11]{G18} it is shown that the map $F_{\ii}$ relates $s({\gamma})$ and $r({\gamma})\in \mathbb{Z}^N$ as follows:
\begin{prop}[\cite{G18}]\label{Fs}
	For $a\in [n-1]$ we have 
	$r=F_{\ii}\circ s$ on $\Gamma_a$.
\end{prop}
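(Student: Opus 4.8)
The plan is to establish the identity $r(\gamma) = F_{\ii}(s(\gamma))$ for a fixed $\gamma \in \Gamma_a$ one coordinate at a time. Writing out $F_{\ii}$ and using that the Cartan matrix of $\sln$ has $c_{i_k,i_\ell}$ equal to $2$, $-1$ or $0$ according as $i_\ell$ equals, is adjacent to, or differs by more than one from $i_k$, the $k$-th coordinate reads
\[
F_{\ii}(s(\gamma))_k = s(\gamma)_k + 2\sum_{\substack{\ell > k \\ i_\ell = i_k}} s(\gamma)_\ell - \sum_{\substack{\ell > k \\ i_\ell = i_k-1}} s(\gamma)_\ell - \sum_{\substack{\ell > k \\ i_\ell = i_k+1}} s(\gamma)_\ell .
\]
First I would record the resulting localization: only crossings lying to the right of the vertex $v_{p,q}$ corresponding to $k$ and sitting at one of the three levels $i_k, i_k\pm 1$ can contribute, and geometrically these are exactly the crossings occupying the same height band as $v_{p,q}$ or one of the two adjacent bands in $\mathcal{D}_{\ii}$. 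I would then fix notation for the two wires $p,q$ meeting at $k$ together with their prolongations to the right.

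The heart of the argument is a local computation along these two wires. Tracing $p$ and $q$ rightward from $v_{p,q}$, each of their subsequent crossings moves one band up or down, and the path $\gamma$ contributes its $s$-value at those crossings it actually visits. Because $\gamma$ avoids the two forbidden fragments of Definition \ref{def:rigpath}, at every vertex it realizes only one of a short list of admissible local shapes, so along each maximal stretch on which $\gamma$ runs straight on a single wire the level-$i_k$ term (coefficient $+2$) and the two adjacent-level terms (coefficient $-1$) telescope. I would package the outcome as a single lemma: $F_{\ii}(s(\gamma))_k$ equals the signed turning indicator of $\gamma$ at $v_{p,q}$, namely $\sgn(q-p)$ (with $p$ the outgoing and $q$ the remaining wire of $\gamma$) when $v_{p,q}\in T_\gamma$ and $0$ otherwise. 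By Definition \ref{def:vectors} this is precisely $r(\gamma)_k$, which finishes the proof.

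The verification of this lemma is where the work lies. It splits according to (i) whether $v_{p,q}\in\gamma$, (ii) whether $v_{p,q}\in T_\gamma$, and (iii) the position of $p$ and $q$ relative to the barrier between labels $\le a$ and $>a$, since these data determine both $s(\gamma)_k$ and the orientations in $\mathcal{D}_{\ii}(a)$. In each case one evaluates the few nonzero neighbouring terms and checks the balance: at an on-path non-turning vertex the value $s(\gamma)_k=-1$ must be cancelled by adjacent contributions summing to $+1$; at a turning point $s(\gamma)_k$ together with the neighbours must produce exactly $\sgn(q-p)$, the sign being forced by whether the turn is upward or downward; and at off-path vertices every contribution must cancel. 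I expect the main obstacle to be exactly this case analysis, in particular the turning points, where $\gamma$ switches wires and the interplay of all three levels $i_k, i_k\pm 1$ must be handled simultaneously, and the boundary effects near the endpoints of $\gamma$ and the left and right margins of the diagram, where some neighbouring crossings are absent and the telescoping terminates; the forbidden-fragment conditions are precisely what make the terminal terms come out correctly. A more structural but infrastructure-heavy alternative would be to induct on $2$- and $3$-moves, using Proposition \ref{prop:SL} (that $F_{\ii}$ intertwines $\Psi^{\ii}_{\jj}$ with $\Phi^{\ii}_{\jj}$) together with move-compatible bijections between the sets $\Gamma_a$ associated to $\ii$ and to $\jj$, reducing to a single base word checked by hand.
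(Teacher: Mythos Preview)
The paper does not supply a proof of Proposition~\ref{Fs}; it is quoted from \cite{G18} (Theorem~3.11 there) and used as a black box. So there is nothing in the present paper to compare your argument against.

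Your direct approach---fix a coordinate $k$, expand $F_{\ii}(s(\gamma))_k$ via the Cartan matrix, and match it to $r(\gamma)_k$ by a local case analysis on the shape of $\gamma$ near $v_{p,q}$---is the natural line and is presumably close to what \cite{G18} does. Two small corrections to your write-up. First, the phrase ``tracing $p$ and $q$ rightward'' is misleading: the sum $\sum_{\ell>k} c_{i_k,i_\ell}\,s(\gamma)_\ell$ ranges over \emph{all} vertices of $\gamma$ to the right of $v_{p,q}$ at levels $i_k$ and $i_k\pm 1$, and since $\gamma$ reverses direction repeatedly these need not lie on the wires $p$ or $q$ at all. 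The telescoping you want is along $\gamma$ itself, and the identity $(s(\gamma))_\ell=\text{level}_\gamma^+(v_\ell)-\text{level}_\gamma^-(v_\ell)$ of Lemma~\ref{pm} (proved in the paper independently of Proposition~\ref{Fs}) is exactly the device that makes it work. Second, ``at an on-path non-turning vertex the value $s(\gamma)_k=-1$'' is only one of the cases: by Definition~\ref{def:vectors} the value is $+1$ whenever the two incident wire labels lie on opposite sides of $a$, turning point or not.

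Your alternative via $2$- and $3$-moves would need more than Proposition~\ref{prop:SL}: that statement concerns $G_{\ii}^{\lambda}=\underline{\lambda}-F_{\ii}$ and the piecewise-linear maps $\Psi,\Phi$, not $F_{\ii}$ acting on the fixed vectors $s(\gamma)$. You would additionally have to exhibit move-compatible bijections $\Gamma_a^{\ii}\leftrightarrow\Gamma_a^{\jj}$ and track how both $r$ and $s$ transform, which is at least as much work as the direct case analysis.
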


In this section we use Proposition \ref{Fs} to show
\begin{prop}\label{wt} For $x\in \mathcal{S}^*_{\ii}(\lambda)$, $a\in [n-1]$ and $\gamma\in \Gamma_a$ we have
$$\left\langle G_{\ii}^{\lambda}(x),s({\gamma})\right\rangle-\left\langle x,r({\gamma})\right\rangle=\wt(x)(h_a).$$
\end{prop}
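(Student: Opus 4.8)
The plan is to compute both inner products directly from the definitions and compare. Write $\langle G_{\ii}^{\lambda}(x),s(\gamma)\rangle = \langle \underline{\lambda},s(\gamma)\rangle - \langle F_{\ii}(x),s(\gamma)\rangle$. Since $F_{\ii}$ is the linear map with matrix $C=(c_{i_k,i_\ell})_{k\le \ell}$ (upper triangular with ones on the diagonal, built from the Cartan matrix), we have $\langle F_{\ii}(x),s(\gamma)\rangle = \langle x, C^{\mathsf T}s(\gamma)\rangle$. The key observation is that Proposition \ref{Fs} gives $r(\gamma)=F_{\ii}(s(\gamma))=C\,s(\gamma)$; so I need to relate $C^{\mathsf T}s(\gamma)$ (which appears here) to $C\,s(\gamma)=r(\gamma)$. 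This is the first point requiring care: the pairing $\langle F_{\ii}(x),s(\gamma)\rangle$ pulls back to $C^{\mathsf T}$, not $C$, so I would either symmetrize or argue that the difference $\langle x,(C^{\mathsf T}-C)s(\gamma)\rangle$ contributes only the $\wt(x)(h_a)$ term on the nose. In fact $(C^{\mathsf T}-C)$ is the skew part carrying exactly the off-diagonal Cartan entries, and contracting it against $x$ and $s(\gamma)$ should, after using the explicit shape of $s(\gamma)$ from Definition \ref{def:vectors}, telescope to $-\langle x, \alpha_a\text{-data}\rangle$.

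Concretely, the plan is: (1) Expand $\langle G_{\ii}^{\lambda}(x),s(\gamma)\rangle - \langle x,r(\gamma)\rangle = \langle \underline{\lambda},s(\gamma)\rangle - \langle x, C^{\mathsf T}s(\gamma)\rangle - \langle x, C s(\gamma)\rangle$ using Proposition \ref{Fs} for the last term. Then $C^{\mathsf T}+C = 2I + (\text{Cartan off-diagonal part, symmetric})$, so $\langle x,(C^{\mathsf T}+C)s(\gamma)\rangle = \langle x, 2s(\gamma)\rangle + (\text{symmetrized Cartan terms})$. Hmm — this still needs the explicit combinatorics. (2) Alternatively, and more robustly, I would invoke naturality: both sides of the claimed identity are affine-linear in $x$, and Proposition \ref{prop:SL} together with Lemma \ref{crysint} lets me reduce to the case $j_1 = a$, i.e. to a reduced word starting with $a$. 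In that case $\Gamma_a$ and the vectors $s(\gamma), r(\gamma)$ simplify drastically (the relevant crossing is the short path through the first vertex), and the identity becomes the well-known relation between $\varepsilon_a$, $\varphi_a$ and the pairing $\wt(x)(h_a)$ — precisely crystal axiom (C1) transported through $G_{\ii}^{\lambda}$. (3) Propagate back to arbitrary $\ii$ using that $G_{\ii}^{\lambda}\circ\Psi^{\ii}_{\jj} = \Phi^{\ii}_{\jj}\circ G_{\ii}^{\lambda}$, that $\Psi$ and $\Phi$ act on $s$ and $r$ compatibly (this is implicit in how $\Gamma_a$ transforms under $2$- and $3$-moves, cf. \cite{GKS1}), and that $\wt(x)(h_a)$ is invariant under the transition maps since $\wt$ is.

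The honest statement of the argument: expand the left side using the definition $G_{\ii}^{\lambda}(x) = \underline{\lambda} - F_{\ii}(x)$ and Proposition \ref{Fs}, obtaining
\begin{equation*}
\langle G_{\ii}^{\lambda}(x),s(\gamma)\rangle - \langle x,r(\gamma)\rangle = \langle \underline{\lambda}, s(\gamma)\rangle - \langle F_{\ii}(x), s(\gamma)\rangle - \langle x, F_{\ii}(s(\gamma))\rangle .
\end{equation*}
By the (bi)linear-algebra identity $\langle F_{\ii}(x),y\rangle + \langle x,F_{\ii}(y)\rangle = \langle x,(F_{\ii}+F_{\ii}^{\mathsf T})(y)\rangle$ and since $F_{\ii}+F_{\ii}^{\mathsf T}$ has matrix $2I$ plus the full (symmetric) Cartan matrix's off-diagonal part indexed by which letters of $\ii$ equal each $i_k$, this contracts against $s(\gamma)$ — whose support and signs are dictated by which wires lie on $\gamma$ and on which side of level $a$ they sit — to yield $\langle x, (\text{vector of }\alpha_{i_k}\text{-coordinates weighted by }c_{a,\cdot})\rangle$, i.e. exactly $-\wt(x)(h_a) + \langle\underline\lambda, s(\gamma)\rangle - \lambda(h_a)$; a final check that $\langle \underline{\lambda}, s(\gamma)\rangle = \lambda(h_a)$ (the positive and negative contributions of $s(\gamma)$ pair with $\underline\lambda$ to give a telescoping sum equal to $\lambda_a$) completes the proof.

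\textbf{Main obstacle.} The crux is the bookkeeping in step (1)/(3): controlling the sign pattern of $s(\gamma)$ from Definition \ref{def:vectors} against the off-diagonal Cartan entries $c_{i_k,i_\ell}$ along the crossing $\gamma$, and recognizing the resulting telescoping sum as $\wt(x)(h_a)$ and as $\lambda(h_a)$. I expect the cleanest route is in fact the reduction via Proposition \ref{prop:SL} to $j_1=a$, where the statement collapses to crystal axiom (C1) under $G_{\ii}^{\lambda}$ (using Lemma \ref{crysint}), sidestepping the explicit wiring-diagram combinatorics entirely; the only thing to verify there is that $s(\gamma)$, $r(\gamma)$ and $\wt(x)(h_a)$ all transform correctly under $2$- and $3$-moves, which is already encoded in the setup of Section \ref{wiringandrein} and in \cite{GKS1}.
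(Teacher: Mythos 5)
Your first route is the paper's own argument up to the point where it stops: expand $G_{\ii}^{\lambda}(x)=\underline{\lambda}-F_{\ii}(x)$, substitute $r(\gamma)=F_{\ii}(s(\gamma))$ from Proposition \ref{Fs}, and use the symmetrization identity
$\langle F_{\ii}(x),s(\gamma)\rangle+\langle x,F_{\ii}(s(\gamma))\rangle=\sum_{k,\ell}c_{i_k,i_\ell}x_k(s(\gamma))_\ell$
(valid because $c_{b,b}=2$ absorbs the diagonal). But everything then hinges on a single combinatorial fact that you flag as ``the main obstacle'' and never establish: writing $\ell_b(y)=\sum_{k:\,i_k=b}y_k$, one needs $\ell_b(s(\gamma))=\delta_{a,b}$ for every $b\in[n-1]$ and every $\gamma\in\Gamma_a$. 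This single identity simultaneously gives $\langle\underline{\lambda},s(\gamma)\rangle=\lambda_a$ and collapses $\sum_{i,j}c_{i,j}\ell_i(x)\ell_j(s(\gamma))$ to $\sum_i c_{a,i}\ell_i(x)$, which is exactly $\lambda_a-\wt(x)(h_a)$. The paper proves it (Lemma \ref{genius}) by introducing the functions $\mathrm{level}^{\pm}_{\gamma}$ on the vertices of $\gamma$, showing $\mathrm{level}^{+}_{\gamma}(v_\ell)-\mathrm{level}^{-}_{\gamma}(v_\ell)=(s(\gamma))_\ell$ by a case analysis on the local picture at each crossing (Lemma \ref{pm}), and then telescoping from $\mathrm{level}^-_\gamma(v_1)=a$ to $\mathrm{level}^+_\gamma(v_m)=a+1$. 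Your ``telescoping'' intuition is the right one, but the actual telescoping quantity (the level) and the case check that it matches the signs in Definition \ref{def:vectors} are the entire content of the proof; without them the argument is not complete.

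Your proposed fallback --- reducing to a reduced word with $j_1=a$ and invoking crystal axiom (C1) through $G_{\ii}^{\lambda}$ --- does not close this gap. The proposition asserts the identity for \emph{every} $\gamma\in\Gamma_a$, and this uniformity in $\gamma$ is precisely what the proof of Theorem \ref{crossing} later exploits (so that the minimizer of $\langle G_{\ii}^{\lambda}(x),s(\cdot)\rangle$ coincides with the minimizer of $\langle x,r(\cdot)\rangle$). Axiom (C1) only constrains $\varepsilon_a$ and $\varphi_a$, i.e.\ the \emph{maxima} over $\Gamma_a$ of the two pairings, so at best it recovers the identity at a maximizing crossing. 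Moreover the transition maps $\Psi^{\ii}_{\jj}$ and $\Phi^{\ii}_{\jj}$ are only piecewise linear and the set $\Gamma_a$ is not carried crossing-by-crossing under $2$- and $3$-moves in a way that preserves the individual linear functionals $\langle\cdot,r(\gamma)\rangle$, so the ``propagate back to arbitrary $\ii$'' step has no basis in the material you cite. You should abandon route (2)--(3) and instead supply the missing lemma $\ell_b(s(\gamma))=\delta_{a,b}$ directly from the wiring-diagram definition of $s(\gamma)$.
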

For this we define for $a\in [n-1]$ the function
\begin{align*}
\ell_a: \mathbb{Z}^N & \rightarrow \mathbb{Z} \\ 
x=(x_k)_{k\in [N]} & \mapsto \displaystyle\sum_{k: \ i_{k}=a}x_{k}.
\end{align*}
To prove Proposition \ref{wt} we use
\begin{lem}\label{genius} For $a,b \in [n-1]$ and $\gamma\in \Gamma_a$ we have $\ell_b(s({\gamma}))=\delta_{a,b}$.
\end{lem}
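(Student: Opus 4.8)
## Plan for Proving Lemma \ref{genius}

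The plan is to compute $\ell_b(s(\gamma))$ directly from the combinatorial definition of $s(\gamma)$ in Definition \ref{def:vectors}, exploiting the structure of $\gamma$ as an oriented path in the wiring diagram $\mathcal{D}_{\ii}(a)$. Recall that $\ell_b(s(\gamma)) = \sum_{k:\, i_k = b} s(\gamma)_k$, where $k$ runs over the vertices of $\mathcal{D}_{\ii}$ of level $b-1$, i.e. the vertices $v_{p,q}$ with exactly $b-1$ wires running below. The key observation is that a vertex $v_{p,q}$ has level $b-1$ precisely when, among the wires passing through or below it at that horizontal position, exactly $b-1$ are below; but since all wires are totally ordered at every horizontal slice and two wires cross at most once, the level of the vertex $v_{p,q}$ in $\mathcal{D}_{\ii}$ equals $\min(p,q)-1$ if we track wires by their starting height — more precisely, I would first fix the bookkeeping that identifies the level of $v_{p,q}$ with a specific index depending on $p,q$, and reduce the claim to counting signed vertices of $\gamma$ on a fixed "horizontal strip."

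First I would set up the following picture: the wire labelled $a$ is oriented left-to-right, the wire labelled $a+1$ is oriented right-to-left (since $a+1 > a$), and $\gamma$ is a path from the leftmost vertex of wire $a$ to the leftmost vertex of wire $a+1$. The crucial structural fact is that $\gamma$, viewed as a curve in the plane, together with a portion of the left boundary, encloses a region; each time $\gamma$ crosses the horizontal level between heights $b-1$ and $b$ it contributes $\pm 1$ to $\ell_b(s(\gamma))$ according to the sign rules in Definition \ref{def:vectors}. The sign convention is designed exactly so that: a vertex $v_{p,q}$ with $p \le a < q$ or $q \le a < p$ (a "crossing of the $a$-barrier") contributes $+1$; a non-turning vertex with both wires on the same side contributes $-1$; turning points and everything else contribute $0$. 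So $\ell_b(s(\gamma))$ is a signed count of how $\gamma$ threads through the $b$-th horizontal corridor.

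The heart of the argument is then a parity/telescoping computation: I would trace $\gamma$ from its start (leftmost vertex of wire $a$, which sits at height roughly $a$) to its end (leftmost vertex of wire $a+1$, at height roughly $a+1$), and observe that each unit horizontal corridor is entered and exited an equal number of times \emph{except} for the corridor between heights $a-1$ and $a$... no — rather, the net signed displacement of $\gamma$ across the $b$-th corridor is $0$ unless $b = a$. Concretely, $s(\gamma)$ was built (Definition \ref{def:vectors}) so that the entries on wires crossing the barrier are $+1$ and the "return" entries are $-1$; summing over a fixed level $b$, the $+1$'s and $-1$'s cancel in pairs (each descent through corridor $b$ on one side is matched by an ascent, via the turning-point structure forbidden/allowed by Definition \ref{def:rigpath}) unless $b=a$, in which case there is exactly one unmatched $+1$ coming from the two endpoints of $\gamma$ straddling the $a$-barrier. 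This is the step I expect to be the main obstacle: making the pairing-up of $+1$ and $-1$ contributions at each level rigorous, which likely requires an induction on the length of $\gamma$ or a careful analysis of the local moves (the two forbidden fragments in Definition \ref{def:rigpath}) that control how consecutive vertices of $\gamma$ on the same wire relate. Once the pairing is established, $\ell_b(s(\gamma)) = \delta_{a,b}$ follows, and Proposition \ref{wt} then follows by combining this with Proposition \ref{Fs} ($r = F_{\ii} \circ s$) and unwinding the definition of $F_{\ii}$ and $\langle G_{\ii}^\lambda(x), s(\gamma)\rangle = \langle \underline{\lambda} - F_\ii(x), s(\gamma)\rangle$, since $\langle \underline{\lambda}, s(\gamma)\rangle = \sum_b \lambda_b\, \ell_b(s(\gamma)) = \lambda_a$ and $\langle F_\ii(x), s(\gamma)\rangle = \langle x, F_\ii(s(\gamma))\rangle = \langle x, r(\gamma)\rangle$ by self-adjointness of the symmetric-ish operator $F_\ii$ (or rather by the transpose relation), giving $\wt(x)(h_a) = \lambda_a - \sum_k x_k \langle \alpha_{i_k}, h_a\rangle$ matched termwise.
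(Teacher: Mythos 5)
Your overall strategy --- reading $\ell_b(s(\gamma))$ as a net signed number of crossings of the horizontal corridor at height $b$ by the path $\gamma$, and telescoping from the start of $\gamma$ (on wire $a$) to its end (on wire $a+1$) --- is exactly the route the paper takes. But the proposal stops short of the one step that actually carries the proof, and you flag this yourself: ``making the pairing-up of $+1$ and $-1$ contributions at each level rigorous \dots{} is the step I expect to be the main obstacle.'' That step is not a routine verification to be deferred; it is the entire content of the lemma. Concretely, what is needed is the local statement that the paper isolates as Lemma \ref{pm}: for each vertex $v_\ell$ of $\gamma$ one defines an incoming level $\mathrm{level}^-_\gamma(v_\ell)$ and an outgoing level $\mathrm{level}^+_\gamma(v_\ell)$, each equal to $\mathrm{level}(v_\ell)$ or $\mathrm{level}(v_\ell)+1$ according to whether the edge of $\gamma$ entering (resp.\ leaving) $v_\ell$ is headed downwards or upwards, and one proves $\mathrm{level}^+_\gamma(v_\ell)-\mathrm{level}^-_\gamma(v_\ell)=(s(\gamma))_\ell$ by checking every local configuration ($q<p\le a$, $p<q\le a$, $p\le a<q$, and the symmetric cases with $a<p$). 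This check uses both the sign rules of Definition \ref{def:vectors} and the two forbidden fragments of Definition \ref{def:rigpath}: one a priori possible configuration (a non-turning vertex with $p<q\le a$) is excluded precisely by the Reineke condition, and without that exclusion the signed count would be wrong. Once this identity is in place the conclusion is immediate from $\mathrm{level}^+_\gamma(v_\ell)=\mathrm{level}^-_\gamma(v_{\ell+1})$, $\mathrm{level}^-_\gamma(v_1)=a$ and $\mathrm{level}^+_\gamma(v_m)=a+1$; no induction on the length of $\gamma$ is needed.

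A second, smaller issue: the level of the vertex $v_{p,q}$ is not $\min(p,q)-1$. It is the number of wires running below that particular intersection point, which depends on where the crossing sits in the diagram and not only on the pair $\{p,q\}$ (wires are permuted as one moves across the diagram). The correct bookkeeping is simply that the vertex indexed by $k\in[N]$ contributes to $\ell_b$ exactly when $i_k=b$, i.e.\ exactly when the step of the telescoping walk at that vertex takes place in the corridor attached to the letter $b$; with that in place the identification of which corridor each entry of $s(\gamma)$ lives in is automatic, and the $\min(p,q)$ heuristic should be dropped.
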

\begin{proof}[Proof of Proposition \ref{wt}]
From Proposition \ref{Fs} we obtain
\begin{align}\notag
\left\langle G_{\ii}^{\lambda}(x),s({\gamma})\right\rangle-\left\langle x,r({\gamma})\right\rangle&=
\left\langle\underline{\lambda}, s({\gamma})\right\rangle-\left\langle F_{\ii}(x),s({\gamma})\right\rangle-\left\langle x,r({\gamma})\right\rangle\\&=
\left\langle \underline{\lambda}, s({\gamma})\right\rangle -\left\langle F_{\ii}(x),s({\gamma})\right\rangle -\left\langle x,F_{\ii}(s({\gamma}))\right\rangle.
\label{1}
\end{align}
By Lemma \ref{genius} we have
\begin{equation}\label{2}
\left\langle\underline{\lambda}, s({\gamma})\right\rangle = \displaystyle\sum_{k \in [N]}  \lambda_{i_k} (s({\gamma}))_{k}
= \displaystyle\sum_{b \in [n-1]} \lambda_{b} \ell_b (s({\gamma}))=\lambda_a.
\end{equation}
Furthermore, since $c_{b,b}=2$,
\begin{align*} & \left\langle F_{\ii}(x),s({\gamma})\right\rangle+\left\langle x,F_{\ii}(s({\gamma}))\right\rangle 
=\displaystyle\sum_{k\in [N]} (F_{\ii}(x))_k(s({\gamma}))_k+\displaystyle\sum_{k\in [N]}x_k\left(F_{\ii}(s({\gamma}))\right)_{k} \\
&=\displaystyle\sum_{k\in [N]} \left(x_k+ \displaystyle\sum_{\ell > k}c_{i_k,i_{\ell}}x_{\ell}\right)(s({\gamma}))_{k}+\displaystyle\sum_{k\in [N]}x_k\left((s({\gamma}))_k  + \displaystyle\sum_{\ell>k}c_{i_k,i_{\ell}}(s({\gamma}))_{\ell}\right) \\
&=\displaystyle\sum_{k,\ell \in [N]} c_{i_k,i_{\ell}}x_k(s({\gamma}))_{\ell}=\displaystyle\sum_{i,j\in [n-1]} c_{i,j}\ell_i(x)\ell_j(s({\gamma})).
\end{align*}
Thus, by Lemma \ref{genius}
\begin{equation}\label{3}
\left\langle F_{\ii}(x),s({\gamma})\right\rangle+\left\langle x,F_{\ii}(s({\gamma}))\right\rangle=\displaystyle\sum_{i \in [n-1]} c_{a,i}\ell_i(x).
\end{equation}
Combining \eqref{1}, \eqref{2} and \eqref{3} yields
$$\left\langle G_{\ii}^{\lambda}(x),s({\gamma})\right\rangle-\left\langle x,r({\gamma})\right\rangle=\lambda_a-\displaystyle\sum_{i\in[n-1]}\sum_{k: \ i_k=i}c_{a,i}x_k=\wt(x)(h_a).$$
\end{proof}

It remains to prove Lemma \ref{genius}. Recall the notion of the level of a vertex $v$ of $\mathcal{D}_{\ii}$ from Definition \ref{defi:wire}. For each vertex $v$ of $\gamma$, we define
$$\text{level}_{\gamma}^-(v)=\begin{cases}\text{level}(v)+1& \text{ the oriented edge of }\mathcal{D}_{\ii}(a) \text{ with target }v \text{ that }\gamma \\ & \text{ follows}\text{ is headed downwards,} \\
\text{level}(v) & \text{ the oriented edge of }\mathcal{D}_{\ii}(a) \text{ with target }v \text{ that }\gamma  \\ & \text{ follows}\text{ is headed upwards,} \end{cases}$$
and
$$\text{level}_{\gamma}^+(v)=\begin{cases}\text{level}(v)& \text{ the oriented edge of }\mathcal{D}_{\ii}(a) \text{ with source }v \text{ that }\gamma \\ & \text{ follows}\text{ is headed downwards,} \\
\text{level}(v)+1 & \text{ the oriented edge of }\mathcal{D}_{\ii}(a) \text{ with source }v \text{ that }\gamma  \\ & \text{ follows}\text{ is headed upwards.} \end{cases}$$
Here we understand ''headed upwards'' and ''headed downwards'' with respect to a small neighborhood around the vertex $v$.

We give an example for this notion.
\begin{ex}
Let $n=5$. And $\gamma=(v_{3,2},v_{3,1},v_{1,2},v_{2,5},v_{2,4},v_{4,5},v_{4,1})$ the $3-$Reineke crossing from Example \ref{rigpathex} colored red below. We have \begin{align*} \text{level}_{\gamma}^-(v_{3,2})&=3, \ \text{level}_{\gamma}^+(v_{3,2})=2, \ \text{level}_{\gamma}^-(v_{3,1})=2, \ \text{level}_{\gamma}^+(v_{3,1})=2, \\ 
 \text{level}_{\gamma}^-(v_{1,2})& =2, \ \text{level}_{\gamma}^+(v_{1,2})=2, , \ \text{level}_{\gamma}^-(v_{2,5})=2, \ \text{level}_{\gamma}^+(v_{2,5})=3, \\ \text{level}_{\gamma}^-(v_{2,4})& =3, \ \text{level}_{\gamma}^+(v_{2,4})=4, \ \text{level}_{\gamma}^-(v_{4,5})=4, \ \text{level}_{\gamma}^+(v_{4,5})=3, \\ \text{level}_{\gamma}^-(v_{4,1})&=3, \ \text{level}_{\gamma}^+(v_{4,1})=4.
 \end{align*}
 
\begin{center}

\begin{tikzpicture}[scale=.75]

\node at (-.5,0) {$1$};
\node at (-.5,1) {$2$};
\node at (-.5,2) {$3$};
\node at (-.5,3) {$4$};
\node at (-.5,4) {$5$};

\draw (0,0) -- node[yshift=-0.033cm]{\textbf{>}} (1,0) -- (2,0) -- (3,1) -- node[yshift=-0.033cm]{\textbf{>}} (4,1) -- (5,2) -- (6,3) --(7,4) -- (10,4) -- node[xshift=-0.8cm,yshift=-0.033cm]{\textbf{>}} (13,4);
\draw (0,1) -- node[yshift=-0.033cm]{\textbf{>}} (1,1) -- (2,2) -- node[yshift=-0.033cm]{\textbf{>}} (4,2) -- (5,1) -- node[yshift=-0.033cm]{\textbf{>}} (8,1) -- (9,2) -- node[yshift=-0.033cm]{\textbf{>}} (10,2) --(11,3) -- node[yshift=-0.033cm]{\textbf{>}} (13,3);
\draw[line width=1.25mm, red] (0,2) -- (1,2)-- (2.8,0.7) -- (3,1) --  (4,1) -- (4.5,1.5) -- (5,1) -- (8,1) -- (9,2) -- (10,2) -- (10.5,2.5) -- (10,3) -- (8,3) -- (7,2) -- (6,2) -- (5,3) -- (0,3); 
\draw (2.8,0.7) -- (4,0) -- node[yshift=-0.033cm]{\textbf{>}} (9,0) -- (10,1) -- node[yshift=-0.033cm]{\textbf{>}} (11,1) -- (12,2) -- (13,2) ;
\draw (0,3) -- node[yshift=-0.033cm]{\textbf{<}} (5,3) -- (6,2) -- node[yshift=-0.033cm]{\textbf{<}} (7,2) -- (8,3) -- node[yshift=-0.033cm]{\textbf{<}} (10,3) -- (11,2) -- (12,1) -- (13,1) ;
\draw (0,4) -- node[xshift=-0.3cm,yshift=-0.033cm]{\textbf{<}} (6,4) -- (7,3) -- (8,2) -- (9,1) -- (10,0) -- node[yshift=-0.033cm]{\textbf{<}} (13,0) ;
\end{tikzpicture}

\end{center}
\end{ex}
Note that, by definition, for $\gamma=(v_1,v_2,\ldots,v_m)\in\Gamma_a$, we have $\text{level}_\gamma^-(v_1)=a$, $\text{level}_\gamma^+(v_{\ell})=\text{level}_\gamma^-(v_{\ell +1})$ and $\text{level}_\gamma^+(v_m)=a+1$. Thus, Lemma \ref{genius} is now a direct consequence of
\begin{lem}\label{pm} For $1 \le \ell \le m$ we have $\text{level}_\gamma^+(v_{\ell})-\text{level}_\gamma^-(v_{\ell})=(s({\gamma}))_{\ell}.$
\end{lem}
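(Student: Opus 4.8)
The plan is to verify the identity vertex-by-vertex by a careful case analysis, organized according to whether a vertex $v_\ell$ is a turning point of $\gamma$ or not, and according to the relative position of the wires $p,q$ meeting at $v_\ell$ with respect to the threshold $a$. Recall that at $v_\ell=v_{p_\ell,q_\ell}$ the path $\gamma$ arrives along wire $p_\ell$ and departs along some wire; if $v_\ell\notin T_\gamma$ then $\gamma$ continues on wire $p_\ell$, while if $v_\ell\in T_\gamma$ then $\gamma$ switches to wire $q_\ell$. First I would record the elementary geometric fact that at an intersection of wire $p$ (coming from one side) with wire $q$, exactly one of the two wires is locally headed upwards and the other downwards near $v$, and that which is which is determined by the orientations in $\mathcal{D}_\ii(a)$ together with whether the crossing is of ascending or descending type. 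Concretely, the orientation convention in $\mathcal{D}_\ii(a)$ (wire $r$ runs left-to-right iff $r\le a$) fixes, for each of the two incident half-edges at $v$ that $\gamma$ uses, whether it is headed up or down, hence fixes $\text{level}^-_\gamma(v)$ and $\text{level}^+_\gamma(v)$ as either $\text{level}(v)$ or $\text{level}(v)+1$.

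Then I would run the case distinction. Write $L=\text{level}(v_\ell)$, so the two incident wires occupy vertical levels $L$ and $L+1$ in a neighbourhood of $v_\ell$.

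\emph{Case 1: $v_\ell\notin T_\gamma$.} Then $\gamma$ enters and leaves along the same wire $p=p_\ell$, which is monotone (either up or down) through $v_\ell$. If $p$ is headed upwards then the incoming half-edge sits at level $L$ and the outgoing at level $L+1$, giving $\text{level}^-_\gamma(v_\ell)=L$, $\text{level}^+_\gamma(v_\ell)=L+1$, so the difference is $+1$. If $p$ is headed downwards the difference is $-1$. Now one checks that ``$p$ headed upwards through $v_\ell$'' happens exactly when $p\le a<q$ or $q\le a<p$ with the appropriate orientation, which is precisely the condition $(s(\gamma))_\ell=+1$ in Definition \ref{def:vectors}; and ``headed downwards'' matches $(s(\gamma))_\ell=-1$ (the subcase $a<p,q$ or $p,q\le a$). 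This uses that the Reineke-crossing condition in Definition \ref{def:rigpath} forbids the two ``wrong-turn'' fragments, which is exactly what rules out the geometrically impossible combinations.

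\emph{Case 2: $v_\ell\in T_\gamma$.} Then $\gamma$ arrives on $p=p_\ell$ and leaves on $q=q_\ell$; both the incoming half-edge of $p$ and the outgoing half-edge of $q$ are incident to $v_\ell$. Here I claim $\text{level}^+_\gamma(v_\ell)=\text{level}^-_\gamma(v_\ell)$, i.e. the difference is $0$, matching $(s(\gamma))_\ell=0$ for turning points (the first clause of the $s$-definition excludes $v_{p,q}\in T_\gamma$, and the middle clause also excludes them). Indeed, at a turning point the path makes a ``V'' shape, so the incoming and outgoing half-edges $\gamma$ uses are either both the lower strands (both at level $L$) or both the upper strands (both at level $L+1$) near $v_\ell$; in the former case both contributions equal $L$, in the latter both equal $L+1$, and in either case the difference vanishes. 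That the configuration is genuinely a ``V'' and not a ``Z'' at a turning point is again guaranteed by the avoidance of the forbidden fragments in Definition \ref{def:rigpath}.

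\textbf{Main obstacle.} The bookkeeping is the crux: one must be scrupulous about the twin conventions ``headed up/down with respect to a small neighbourhood of $v$'' versus ``which of the two wires carries the larger label'', since these can point in opposite directions depending on the sign of $q-p$ and on which side of the threshold $a$ the labels lie. I expect the delicate point to be checking that in every admissible turning-point configuration the two half-edges used by $\gamma$ sit at the \emph{same} level, i.e. that the reversal really is a ``V'' and never a ``Z''; this is precisely where the defining condition of an $a$-crossing (Definition \ref{def:rigpath}) is essential, and I would isolate it as a short sublemma before doing the case analysis. Once that sublemma is in place, each of the remaining cases is a one-line comparison of the level difference with the corresponding clause of Definition \ref{def:vectors}.
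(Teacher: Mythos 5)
Your overall strategy (a local case analysis at each vertex, using the forbidden fragments of Definition \ref{def:rigpath} to rule out impossible configurations) is the same as the paper's, and your Case 1 (non-turning points) is essentially correct. However, Case 2 contains a genuine error. You assert that every turning point contributes $0$ to $s(\gamma)$ and that the level difference at a turning point always vanishes; neither is true. The first clause of Definition \ref{def:vectors} assigns $(s(\gamma))_{p,q}=1$ to \emph{every} vertex $v_{p,q}\in\gamma$ with $p\le a<q$ or $q\le a<p$ --- it does not exclude turning points (only the $-1$ clause excludes $T_\gamma$). So a turning point whose two wires straddle $a$ has $s$-value $+1$, and correspondingly the two half-edges used by $\gamma$ there sit at \emph{different} levels: since the incoming and outgoing wires are oriented in opposite directions in $\mathcal{D}_{\ii}(a)$, the path arrives on the lower strand and departs on the upper strand (both half-edges headed upwards), giving $\text{level}^+_\gamma(v_\ell)-\text{level}^-_\gamma(v_\ell)=+1$, not $0$. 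A concrete counterexample to your claim is the vertex $v_{2,4}\in T_\gamma$ in Example \ref{rigpathex}, where $\text{level}^-_\gamma(v_{2,4})=3$, $\text{level}^+_\gamma(v_{2,4})=4$ and $(s(\gamma))_{2,4}=1$.

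Your proposed sublemma (``the reversal is a V, never a Z'') is therefore only valid for turning points whose two wires lie on the same side of $a$; for straddling turning points the reversal is a ``$<$'' or ``$>$'' shape with the two strands at levels $L$ and $L+1$. The paper's proof avoids this trap by organizing the cases according to the relative position of $p$, $q$ and $a$ first, and within the case $p\le a<q$ (and its mirror image) checking \emph{both} local configurations --- straight-through and turning --- and observing that both yield $+1$, in agreement with the fact that the $+1$ clause of $s$ ignores whether the vertex is a turning point. To repair your argument you would need to split your Case 2 into the same-side and straddling subcases and redo the level computation in the latter.
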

\begin{proof}Assume that the vertex $v_{\ell}=v_{p,q}$ of $\gamma$ lies at the intersection of wires $p$ and $q$, where $p$ is the oriented wire with source $v_{\ell}$.

We assume first $p\le a$, hence the wire $p$ is oriented from left to right in $\mathcal{D}_{\ii}(a)$. We proceed by a case by case analysis.

$\underline{q < p \le a}:$ Locally around $v_{\ell}$ there are two possibilities for $\gamma$:
\begin{center}
\begin{tikzpicture}[scale=.75]

\draw (0,0)node[below left]{{\color{black}q}} -- (2,2);
\draw[line width=0.6mm] (0,2) -- (1,1);
\draw (0,2) -- (1,1);
\draw (1,1) node[right]{{\color{black}$v_{\ell}$}} -- (2,2)node[thick]{\large{\rotatebox{40}{\textbf{>}}}};
\draw (2,0)node[thick]{\large{\rotatebox{320}{\textbf{>}}}}  -- (0,2)node[above left]{p};
\draw[line width=0.6mm] (1,1) -- (2,0);
\draw (1,1) -- (2,0);

\draw (4,0) node[below left]{q} -- (6,2)node[thick]{\large{\rotatebox{40}{\textbf{>}}}};
\draw (6,0) node[thick]{\large{\rotatebox{320}{\textbf{>}}}} -- (5,1);
\draw[line width=0.6mm] (5,1) -- (4,2);
\draw[line width=0.6mm] (5,1) -- (6,2);
\draw (5,1) node[right]{$v_{\ell}$};
\draw (4,2) node[above left]{p};

\end{tikzpicture}
\end{center}

In the left case, we have $(s({\gamma}))_{\ell}=-1$, $\text{level}_\gamma^-(v_{\ell})=\text{level}(v_{\ell})+1$ and $\text{level}_\gamma^+(v_{\ell})=\text{level}(v_{\ell})$. In the right case, we have $(s({\gamma}))_{\ell}=0$, $\text{level}_\gamma^-(v_{\ell})=\text{level}(v_{\ell})+1$ and $\text{level}_\gamma^+(v_{\ell})=\text{level}(v_{\ell})+1$.

$\underline{p < q \le a}:$ Locally around $v_{\ell}$ there are two possibilities for $\gamma$:
\begin{center}
\begin{tikzpicture}[scale=.75]

\draw (0,0)node[below left]{{\color{black}p}} -- (2,2);
\draw[line width=0.6mm] (0,0) -- (1,1);
\draw (0,2) -- (1,1);
\draw (1,1) node[right]{{\color{black}$v_{\ell}$}} -- (2,2)node[thick]{\large{\rotatebox{40}{\textbf{>}}}};
\draw (2,0)node[thick]{\large{\rotatebox{320}{\textbf{>}}}}  -- (0,2)node[above left]{q};
\draw[line width=0.6mm] (1,1) -- (2,2);
\draw (1,1) -- (2,0);

\draw (4,0) node[below left]{p} -- (6,2)node[thick]{\large{\rotatebox{40}{\textbf{>}}}};
\draw (6,0) node[thick]{\large{\rotatebox{320}{\textbf{>}}}} -- (5,1);
\draw[line width=0.6mm] (5,1) -- (4,0);
\draw[line width=0.6mm] (5,1) -- (6,0);
\draw (4,2) -- (5,1);
\draw (5,1) node[right]{$v_{\ell}$};
\draw (4,2) node[above left]{q};

\end{tikzpicture}
\end{center}

The left case cannot appear since $\gamma$ is an $a$-Reineke crossing. In the right case, we have $(s({\gamma}))_{\ell}=0$, $\text{level}_\gamma^-(v_{\ell})=\text{level}(v_{\ell})$ and $\text{level}_\gamma^+(v_{\ell})=\text{level}(v_{\ell})$.

$\underline{p \le a<q}:$ Locally around $v_{\ell}$ there are two possibilities for $\gamma$:

\begin{center}
\begin{tikzpicture}[scale=.75]

\draw (0,0)node[below left]{{\color{black}i}} -- (2,2);
\draw (0,2) -- (1,1);
\draw (1,1) node[right]{{\color{black}$v_{\ell}$}} -- (2,2)node[thick]{\large{\rotatebox{40}{\textbf{>}}}};
\draw (2,0)  -- (0,2)node[above left]{q}node[thick]{\large{\rotatebox{130}{\textbf{>}}}};
\draw (1,1) -- (2,0);
\draw[line width=0.6mm] (0,0) -- (2,2);

\draw (4,0) node[below left]{i} -- (6,2)node[thick]{\large{\rotatebox{40}{\textbf{>}}}};
\draw (6,0)  -- (5,1);
\draw[line width=0.6mm] (4,0) -- (5,1);
\draw[line width=0.6mm] (5,1) -- (4,2);
\draw (4,2) node[above left]{q}node[thick]{\large{\rotatebox{130}{\textbf{>}}}};
\draw(5,1) -- (6,2);
\draw (5,1) node[right]{$v_{\ell}$};

\end{tikzpicture}
\end{center}

In the both cases, we have $(s({\gamma}))_{\ell}=1$, $\text{level}_\gamma^-(v_{\ell})=\text{level}(v_{\ell})$ and $\text{level}_\gamma^+(v_{\ell})=\text{level}(v_{\ell})+1$. 

The argument for the assumption $a+1 \le p$ is symmetrical.
\end{proof}

\subsection{Proof of the Dual Crossing Formula}
\begin{proof}[Proof of Theorem \ref{crossing}]
Equation \eqref{wts} was established in Lemma \ref{staronpoln}.

We prove \eqref{epsilon}. By Lemma \ref{crysint} and the crystal axiom (C1) in Definition \ref{abstrcryst}
\begin{equation}\label{p2}
\varepsilon_a(x)=\varphi_a(G_{\ii}^{\lambda}(x))=\wt(G_{\ii}^{\lambda}(x))(h_a)+\varepsilon_a(G_{\ii}^{\lambda}(x)).
\end{equation}
By Proposition \ref{prop:SL} we have $G_{\ii}^{\lambda}(x)\in \mathcal{L}_{\ii}(\lambda^*)$. Using Theorem \ref{crossL} to compute the value of $\varepsilon_a$ on this Lusztig-datum we obtain
\begin{align} \notag
\varepsilon_a(G_{\ii}^{\lambda}(x))&=\max\{\left\langle G_{\ii}^{\lambda}(x),s({\gamma}) \right\rangle \mid \gamma\in \Gamma_a \} \\ \label{p1}
&= \max\{\left\langle x,r({\gamma}) \right\rangle \mid \gamma\in \Gamma_a\}+\wt(x)(h_a),
\end{align}
where \eqref{p1} follows from Proposition \ref{wt}. By Lemma \ref{crysint}
\begin{equation}\label{p3}
\wt(x)(h_a)=-\wt(G_{\ii}^{\lambda}(x))(h_a).
\end{equation}
Plugging \eqref{p1} and \eqref{p3} into \eqref{p2} yields \eqref{epsilon}.

We next prove \eqref{f}. If $\varphi_a(x)=0$ the claim follows from Lemma \ref{crysint}.

Assume now that $\varphi_a(x)>0$. By Lemma \ref{crysint} we have
\begin{equation}\label{f1}
{f}_ax={f}_a (G_{\ii}^{\lambda})^{-1}\circ G_{\ii}^{\lambda}(x)=(G_{\ii}^{\lambda})^{-1}\left({e}_aG_{\ii}^{\lambda}(x)\right).
\end{equation}
By Proposition \ref{prop:SL} we have that $G_{\ii}^{\lambda}(x)\in \mathcal{L}_{\ii}(\lambda^*)$ and by Lemma \ref{crysint} that $\varepsilon_a(x)>0$. 
Thus by Theorem \ref{crossL}
\begin{equation}\label{f3}
{e}_aG_{\ii}^{\lambda}(x) = G_{\ii}^{\lambda}(x) + r(\gamma^x),
\end{equation}
where $\gamma^{x}\in \Gamma_a$ is minimal with $\langle G_{\ii}^{\lambda}(x),s(\gamma^x)\rangle= \max\{\langle G_{\ii}^{\lambda}x,s({\gamma}) \rangle \mid \gamma\in \Gamma_a\}$. By Proposition \ref{wt} $\left\langle G_{\ii}^{\lambda}(x),s({\gamma}) \right\rangle-\left\langle x,r({\gamma})\right\rangle=\wt(x)(h_a)$ is independent of $\gamma\in \Gamma_a$. Thus, $\gamma^{x}\in\Gamma_a$ is minimal with 
$$
\left\langle x, r(\gamma^x)\right\rangle= \max\left\{\left\langle x,r({\gamma}) \right\rangle \mid \gamma\in \Gamma_a\right\}=\varepsilon_a(x),
$$
where we used \eqref{epsilon} in the last equality.
Furthermore, by \eqref{f1} and \eqref{f3}
\begin{equation*}
{f}_ax=(G_{\ii}^{\lambda})^{-1}\left(G_{\ii}^{\lambda}(x)+r(\gamma^x)\right)=x+F_{\ii}^{-1}(r(\gamma^x))
\end{equation*}
and \eqref{f} follows from Proposition \ref{Fs}.

The proof of \eqref{e} works analogously to the proof of \eqref{f}.
\end{proof}

\section{Kashiwara $*$-involution on String data}\label{sec:kashstar}
In this section we denote by $\mathcal{S}_{\ii}$ and $\mathcal{S}_{\ii}^*$ the set of $\ii$-string data equipped with the crystal structure inherited from $B(\infty)$ and $B(\infty)^*$, respectively, via the bijection $\str^*_{\ii}$ (see \eqref{scrys} and \eqref{epsiastarstr}). We denote by $\mathcal{L}_{\ii}=\N^N$ and $\mathcal{L}_{\ii}^*=\N^N$ the set of $\ii$-Lusztig data with the crystal structure inherited from $B(\infty)$ and $B(\infty)^*$, respectively,  via the bijection $b_{\ii}$ defined in \eqref{lusBiso}. We write $\mathcal{L}_{\ii}^{\mathbb{R}}:=\mathbb{R}_{\geq 0}^N$. Using $\varepsilon_a$ from the crystal $\mathcal{L}_{\ii}$ and $\varepsilon_a^*$ from $\mathcal{L}_{\ii}^*$ we define the polytopes
\begin{align*}
\mathcal{L}_{\ii}(\lambda)^{\mathbb{R}}&:=\{x\in \mathcal{L}_{\ii}^{\mathbb{R}} \mid \varepsilon^*_a(x)\le \lambda_a \ \forall a \in [n-1]\},\\
\mathcal{L}_{\ii}(\lambda)^{\mathbb{R}}&:=\{x\in \mathcal{L}_{\ii}^{\mathbb{R}} \mid \varepsilon_a(x)\le \lambda_a \ \forall a \in [n-1]\}.
\end{align*}
The integral points of $\mathcal{L}_{\ii}(\lambda)^{\mathbb{R}}$ and $\mathcal{L}_{\ii}^*(\lambda)^{\mathbb{R}}$ are $\mathcal{L}_{\ii}(\lambda)$ and $\mathcal{L}_{\ii}^*(\lambda)$ respectively.  

For a reduced word $\ii=(i_1, \dots, i_N)\in\W$ we define 
\begin{align*}
\ii^*&:=(n-i_1, \dots, n-i_N)\in\W,\\
\ii^{\text{op}}:&=(i_N, \dots, i_1)\in\W.
\end{align*}

For $\ii, \jj \in\W$ the Kashiwara $*$-involution  $* : B(\infty) \rightarrow B(\infty)^*$ introduced in Section \ref{sec:crysrep} on string data is given by the isomorphism of crystals
\begin{equation}\label{kashstr}
\str_{\jj}^*\circ\str_{\ii}^{-1}:\mathcal{S}_{\ii}^* \xrightarrow{\sim} \mathcal{S}_{\jj}.
\end{equation}
In general the map \eqref{kashstr} is piecewise linear. We show that \eqref{kashstr} is linear for $\ii=\ii_0:=(1,2,1,3,2,1, \dots, n-1,n-2,\dots, 1)$ and $\jj=\ii_0^{*}$.

Using the Crossing Formula \cite[Theorem 2.13]{GKS1}
 we compute $\str_{\ii_0}\circ b_{\ii_0}$: If $(i_{\ell},i_{\ell+1},\ldots,i_{\ell+m})$ is a maximal subword of $\ii_0$ of the form $(k,k-1,\ldots,1)$ we have for $j \in \{0,1,\ldots, m\}$ 
$$(\str_{\ii_0}\circ b_{\ii_0} (x))_{\ell+j}=x_{\ell}+x_{\ell+1}+\ldots x_{\ell+m-j}.$$
From the $*$-Crossing Formula \cite[Theorem 2.20]{GKS1} 
 we compute
$$
\str_{\ii_0^*}^*\circ b_{\ii_0^{\text{op}}} (x_1, \dots, x_N)=\str_{\ii_0}\circ b_{\ii_0} (x_N, \dots, x_1).
$$
Since $\ii_0$ and $\ii_0^{\text{op}}$ are related by a sequence of $2$-moves the isomorphism of crystals
$
\Phi^{\ii_0}_{\ii_0^{\text{op}}} 
$
sending $\ii_0$-Lusztig data to $\ii_0^{\text{op}}$-Lusztig data is linear. We thus obtain the linear isomorphism of crystals
$$
*=\str_{\ii_0^{*}}^*\circ\str_{\ii_0}^{-1}=\str_{\ii_0^{*}}^*\circ b_{\ii_0^{\text{op}}} \circ \Phi^{\ii_0}_{\ii_0^{\text{op}}}  \circ b_{\ii_0}^{-1}\circ \str_{\ii_0}^{-1} : \mathcal{S}_{\ii_0}^* \xrightarrow{\sim} \mathcal{S}_{\ii_0^*}.
$$
Since $*=\str_{\ii_0^{*}}^*\circ\str_{\ii_0}^{-1}:\mathcal{S}_{\ii_0} \xrightarrow{\sim} \mathcal{S}_{\ii_0^*}^*$ is an isomorphism of crystals as well, we obtain for $\lambda\in P^+$ the linear isomorphism of crystals
$$
*=\str_{\ii_0^{*}}^*\circ\str_{\ii_0}^{-1} :  \mathcal{S}_{\ii_0}^*(\lambda) \xrightarrow{\sim} \mathcal{S}_{\ii_0^*}(\lambda)
$$
and the unimodular isomorphismus of polytopes
$$
*=\str_{\ii_0^{*}}^*\circ\str_{\ii_0}^{-1} :  \mathcal{S}_{\ii_0}^*(\lambda)^{\mathbb{R}} \xrightarrow{\sim} \mathcal{S}_{\ii_0^*}(\lambda)^{\mathbb{R}}
$$

For $\ii, \jj\in\W$ arbitrary we obtain the piecewise linear isomorphisms
\begin{align*}
\Psi^{\ii_0^*}_{\jj} \circ \str_{\ii_0^{*}}^*\circ\str_{\ii_0}^{-1} \circ \Psi^{\ii}_{\ii_0}&: \mathcal{S}_{\ii}^* \xrightarrow{\sim} \mathcal{S}_{\jj},\\
\Psi^{\ii_0^*}_{\jj} \circ \str_{\ii_0^{*}}^*\circ\str_{\ii_0}^{-1} \circ \Psi^{\ii}_{\ii_0}&: \mathcal{S}_{\ii}^*(\lambda) \xrightarrow{\sim} \mathcal{S}_{\jj}(\lambda)
\end{align*}
and the piecewise linear volume preserving bijections
\begin{align*}
\Psi^{\ii_0^*}_{\jj} \circ \str_{\ii_0^{*}}^*\circ\str_{\ii_0}^{-1} \circ \Psi^{\ii}_{\ii_0}&: \mathcal{S}_{\ii}^{\mathbb{R}} \xrightarrow{\sim} \mathcal{S}_{\jj}^{\mathbb{R}},\\
\Psi^{\ii_0^*}_{\jj} \circ \str_{\ii_0^{*}}^*\circ\str_{\ii_0}^{-1} \circ \Psi^{\ii}_{\ii_0}&: \mathcal{S}_{\ii}^*(\lambda)^{\mathbb{R}} \xrightarrow{\sim} \mathcal{S}_{\jj}(\lambda)^{\mathbb{R}}.
\end{align*}

By \cite[Proposition 3.3 (iii)]{BZ2} the $*$-involution is given on Lusztig data by the linear map
\begin{align*}
*:\mathcal{L}_{\ii}&\xrightarrow{\sim} \mathcal{L}_{\ii^{*, \text{op}}}^*,\\
x=(x_1, \dots, x_N) &\mapsto x^{\text{op}}=(x_N, \dots, x_1).
\end{align*}
For $\lambda\in P^+$ we thus have the following commutative diagrams of isomorphisms of crystals which are linear for $\ii=\ii_0$.
\[ \begin{tikzcd}[row sep=normal, column sep=huge]
\mathcal{L}_{\ii}  \arrow{r}{\str_{\ii}\circ b_{\ii}} \arrow{d}{*} & \mathcal{S}_{\ii}^* \arrow{d}{*} \\%
\mathcal{L}^*_{\ii^{*\text{op}}} \arrow{r}{\str_{\ii^{*\text{op}}}\circ b_{\ii^{*\text{op}}}}& \mathcal{S}_{\ii^{*\text{op}}}
\end{tikzcd} 
\qquad\qquad 
\begin{tikzcd}[row sep=normal, column sep=huge]
\mathcal{L}_{\ii}(\lambda)  \arrow{r}{\str_{\ii}\circ b_{\ii}} \arrow{d}{*} & \mathcal{S}_{\ii}^*(\lambda) \arrow{d}{*} \\%
\mathcal{L}^*_{\ii^{*\text{op}}}(\lambda) \arrow{r}{\str_{\ii^{*\text{op}}}\circ b_{\ii^{*\text{op}}}}& \mathcal{S}_{\ii^{*\text{op}}}(\lambda)
\end{tikzcd} 
\]
Furthermore, the following are commutative diagrams of volume preserving piecewise linear bijections which are linear for $\ii=\ii_0$.
\[ \begin{tikzcd}[row sep=normal, column sep=huge]
\mathcal{L}_{\ii}^{\mathbb{R}}  \arrow{r}{\str_{\ii}\circ b_{\ii}} \arrow{d}{*} & \mathcal{S}_{\ii}^{\mathbb{R}} \arrow{d}{*} \\%
\mathcal{L}^{\mathbb{R}}_{\ii^{*\text{op}}} \arrow{r}{\str_{\ii^{*\text{op}}}\circ b_{\ii^{*\text{op}}}}& \mathcal{S}_{\ii^{*\text{op}}}^{\mathbb{R}}
\end{tikzcd} 
\qquad\qquad 
\begin{tikzcd}[row sep=normal, column sep=huge]
\mathcal{L}_{\ii}(\lambda)^{\mathbb{R}}  \arrow{r}{\str_{\ii}\circ b_{\ii}} \arrow{d}{*} & \mathcal{S}_{\ii}^*(\lambda)^{\mathbb{R}} \arrow{d}{*} \\%
\mathcal{L}^*_{\ii^{*\text{op}}}(\lambda)^{\mathbb{R}} \arrow{r}{\str_{\ii^{*\text{op}}}\circ b_{\ii^{*\text{op}}}}& \mathcal{S}_{\ii^{*\text{op}}}(\lambda)^{\mathbb{R}}
\end{tikzcd} 
\]

\def\cprime{$'$} \def\cprime{$'$} \def\cprime{$'$} \def\cprime{$'$}

\end{document}